\documentclass[UTF8]{article}
\usepackage{amssymb,amsmath,amsthm,mathrsfs,multirow,xcolor,framed,url,authblk}
\usepackage{float}
\usepackage{lipsum}
\usepackage{tabularx}
\usepackage{booktabs}
\usepackage{longtable}

\usepackage{tikz}
\usepackage[all]{xy}
\oddsidemargin = 0.0cm
\evensidemargin = 0.0cm
\textwidth = 6.5in
\textheight =8.0in
\usepackage[ 
bookmarks=true,
bookmarksnumbered=true
bookmarksopen=flase,
hypertexnames=false,
breaklinks=true,
colorlinks=true,
pagebackref=true,
hyperindex=true,
plainpages=false
pdfauthor={Zhou }
pdftitle={Ternary}
pdfsubject={ }
pdfkeywords={}
 ]
{hyperref}

\newif\ifpdfAuthoring
\pdfAuthoringtrue
\ifpdfAuthoring
\fi
\newtheorem{theorem}{Theorem}[section]
\newtheorem{lemma}[theorem]{Lemma}
\newtheorem{cor}[theorem]{Corollary}

\newtheorem{prop}[theorem]{Proposition}

\theoremstyle{definition}

\usepackage{tikz}
\usetikzlibrary{arrows.meta}
\theoremstyle{remark}
\newtheorem{remark}[theorem]{Remark}

\numberwithin{equation}{section}

\usepackage{hyperref}
\newcommand\nutwid{\overset {\text{\lower 3pt\hbox{$\sim$}}}\nu}

\newcommand{\Nodd}{N^{\text{odd}}}
\newcommand{\Neven}{N^{\text{even}}}
\newcommand{\tr}{\mbox{tr}}
\newcommand{\n}{\mbox{n}}
\newcommand{\discrd}{\mbox{discrd}}
\newcommand{\disc}{\mbox{disc}}

\allowdisplaybreaks

\providecommand{\keywords}[1]
{\par\hspace{\parindent}\small\textbf{\bfseries\centering Keywords :} #1 }
\providecommand{\MSC}[1]
{\par\hspace{\parindent}\small\textbf{\bfseries\centering MSC :} #1 }





\newcommand{\Aut}{\operatorname{Aut}}



\begin{document}

\title{The classification and representations of positive definite ternary quadratic forms of level $4N$}

\date{}
\author[1]{Yifan Luo}
\author[1,2]{Haigang Zhou}
\affil[1]{School of Mathematical Sciences, Tongji University, Shanghai, China, 200092}
\affil[2]{Institute of Mathematics, Henan Academy of Sciences,  Zhengzhou, Henan, China, 450046}




\maketitle

\tableofcontents

\begin{abstract}
  Classifications and representations are two main topics in the theory of quadratic forms. In this paper, we consider these topics of ternary quadratic forms. For a given squarefree integer $N$, first we give the classification of positive definite ternary quadratic forms of level $4N$ explicitly. Second, we give explicit formulas of the weighted sum of representations over each class in every genus of ternary quadratic forms of level $4N$, which are involved with modified Hurwitz class number. In the proof of the main results, we use the relations among ternary quadratic forms, quaternion algebras, and Jacobi forms. As a corollary, we get the formula for the class number of positive ternary quadratic forms of level $4N$.
 As applications, we derive an explicit base of Eisenstein series space of modular forms of weight $3/2$ and level $4N$, and give new proofs of some interesting identities involving representation number of ternary quadratic forms.
\end{abstract}
	
	

\keywords{ternary quadratic forms, quaternion algebras, Hurwitz class number}

\MSC{11E20,  11R52, 11R29, 11F50, 11F37}

\section{Introduction}

Let $f$ be a ternary quadratic form with integer coefficients, given by the equation
\begin{equation*}
f(x,y,z)=ax^2+by^2+cz^2+ryz+sxz+txy.
\end{equation*}
Unless otherwise stated, we assume that $f$ is primitive and positive definite. We will also denote $f$ by $(a,b,c,r,s,t)$. Recall that the matrix associated to $f$ is
\begin{equation*}
M=M_f=
\begin{pmatrix}
2a & t & s\\
t & 2b & r\\
s & r & 2c
\end{pmatrix}
.
\end{equation*}
Define the discriminant of $f$ to be
\begin{equation*}
d=d_f=\frac{\det(M_f)}{2}=4abc+rst-ar^2-bs^2-ct^2. 
\end{equation*}
The level of $f$ is the smallest positive integer $N$ such that $NM_f^{-1}$ is even, that is, has integral entries, and even integers on the main diagonal. 
 Let the number of ways to represent  integer $n$ by ternary quadratic form $f$ be
$
	R_f(n) := 
	\sharp\{X \in \mathbb{Z}^3 \mid f(X)= \frac 12 X^tM_fX=n    \}. 
$

Classifications and representations are two main topics in the theory of quadratic forms. Regarding the classifications of ternary quadratic forms, two significant equivalent relations exist, namely equivalence class and semi-equivalence class. The set of semi-equivalence class is called genus. Extensive tables of positive definite ternary quadratic forms, categorized by discriminant, have been compiled. The tables compiled by Brandt and Intrau comprehensively document all reduced ternary forms with discriminant $d<1000$.  Lehman \cite{Leh92} grouped positive definite ternary quadratic forms differently by level. He gave some correspondences between classes of ternary quadratic forms having the same level with different discriminants and provided a practical method for finding representatives of all classes of ternary forms with a given level. However,  the method for determining whether two ternary quadratic forms are semi-equivalent is still cumbersome. In this paper, for squarefree interger $N$, we group primitive positive definite ternary quadratic forms of level $4N$ more explicitly, which only depends on level, discriminant, and the place where $f$ is anisotropic, as follows.

 In the following, we always assume that  $N$ is a product of $s$ distinct odd primes. Let $\Nodd$ (respectively, $\Neven$, or $N_r$) denote one of  divisors of $N$ with odd number of (respectively, even number of, or exactly  $r$ many  ) prime factors. We use $G_{4N,d,\Nodd}$ (resp. $G_{4N,d,2\Neven}$) for the genus of primitive positive definite ternary quadratic forms of level $4N$, discriminant $d$, and these forms are anisotropic only in the $p$-adic field where $p\mid\Nodd$ (resp.$p\mid 2\Neven$).

\begin{theorem}\label{theo:cla}
	Let $N$ be a product of $s$ distinct odd primes.  Then for all primitive positive definite ternary quadratic forms of level $4N$, there are $2^{2s+1}$ genera.  These are $G_{4N,N^2/N_r,\Nodd}$, $G_{4N, 4N^2/N_r,\Nodd}$, $G_{4N,4N^2/N_r,2\Neven}$ and $G_{4N,16N^2/N_r,\Nodd}$, where $\Nodd$ (respectively, $\Neven$ or $N_r$ ) runs over all divisors of $N$ that contain an odd number of (respectively, an even number of, or exactly $r$ many) prime factors.
\end{theorem}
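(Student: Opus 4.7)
The plan is to invoke the local--global principle: two primitive positive definite ternary forms of level $4N$ lie in the same genus iff they are $\mathbb{Z}_p$-isometric for every prime $p$. So counting genera reduces to enumerating compatible tuples of local isometry classes, subject to one global compatibility condition, namely the Hasse product formula $\prod_v c_v(f) = 1$, which, combined with anisotropy at $\infty$ (i.e.\ positive definiteness), forces the number of finite primes at which $f$ is anisotropic to be odd.

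At any prime $p \nmid 2N$, the level condition forces $L \otimes \mathbb{Z}_p$ to be unimodular, and hence pinned down by the global discriminant modulo squares. For each odd prime $p \mid N$, the $p$-adic level is exactly $p$, so by Jordan decomposition the lattice $L \otimes \mathbb{Z}_p$ has all Jordan scales in $\{1, p\}$, with at least one block of scale $p$ (from the level condition) and at least one of scale $1$ (from global primitivity of $f$). There are exactly two such Jordan types, $(0,0,1)$ with $v_p(d) = 1$ and $(0,1,1)$ with $v_p(d) = 2$; the remaining freedom within each type up to $\mathbb{Z}_p$-isometry may be packaged as a single binary choice, namely whether $f$ is isotropic or anisotropic at $p$. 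For $p = 2$ the local level is $4$, and a parallel $2$-adic Jordan analysis (now carefully allowing the two binary non-diagonal Jordan blocks that occur over $\mathbb{Z}_2$) will show that the possible $2$-adic discriminant valuations are $0$, $2$, and $4$, matching the three global shapes $N^2/N_r$, $4N^2/N_r$, and $16N^2/N_r$, and that a $2$-adic isotropy/anisotropy dichotomy exists precisely in the middle case $v_2(d) = 2$.

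Since anisotropic primes must divide $4N$ and the anisotropic set must have odd cardinality, this set is either an odd-sized divisor $\Nodd$ of $N$ (when $f$ is isotropic at $2$), or $2$ together with an even-sized divisor of $N$, namely $2\Neven$ (when $f$ is anisotropic at $2$). By the $p=2$ step the latter is only compatible with $v_2(d) = 2$, hence only with the discriminant shape $4N^2/N_r$. This produces exactly the four families in the statement: $G_{4N, N^2/N_r, \Nodd}$, $G_{4N, 4N^2/N_r, \Nodd}$, $G_{4N, 4N^2/N_r, 2\Neven}$, and $G_{4N, 16N^2/N_r, \Nodd}$.

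Counting indices then gives $|\{N_r\}| \cdot |\{\Nodd \text{ or } \Neven\}| = 2^s \cdot 2^{s-1} = 2^{2s-1}$ genera per family, for a total of $4 \cdot 2^{2s-1} = 2^{2s+1}$. The hardest step in carrying out this plan rigorously is the $p = 2$ enumeration: Jordan decomposition over $\mathbb{Z}_2$ is delicate (units modulo squares have order $8$, and the parity of diagonal versus binary Jordan blocks matters), and one also has to verify that every locally-realizable tuple is actually attained by an integral ternary form, which I would establish via strong approximation for the spin group together with the construction of an explicit local representative at each prime.
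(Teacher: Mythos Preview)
Your approach via direct local enumeration plus the Hasse reciprocity constraint is sound and essentially correct, but it differs substantially from the paper's. The paper does not enumerate $\mathbb{Z}_p$-classes from scratch; instead it uses Lehman's bijections $\phi_p: C(4N, p^h d') \to C(4N, p^{3g-h} d')$ (Theorem~\ref{Leh2}) together with Proposition~\ref{m1} and Proposition~\ref{ISO} to transport genera between discriminants, reducing everything to the two base cases $d = N^2$ and $d = 4N^2$, for which the genus counts $2^{s-1}$ and $2^s$ are quoted from \cite[Lemma~3]{Leh92}. The explicit labeling by anisotropy locus and the existence of each genus come from the Eichler-order constructions in Section~\ref{sec:quaternion-algebra} and Propositions~\ref{OQD} and~\ref{SQD}, rather than from an abstract local--global argument.

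Your route buys self-containment (no reliance on Lehman's Lemma~3) at the cost of the $2$-adic enumeration, which you correctly identify as the crux: you must show that among primitive $\mathbb{Z}_2$-lattices of level exactly $4$, each odd discriminant class admits a unique isometry class (necessarily isotropic), each discriminant of $2$-valuation $4$ likewise, while each of $2$-valuation $2$ admits exactly two (one isotropic, one anisotropic). This is doable but genuinely fiddly with Type~I/II Jordan blocks over $\mathbb{Z}_2$; the paper sidesteps it entirely via $\phi_2$ and the cited Lehman count. For the existence of a global form realizing each compatible local tuple, you do not actually need strong approximation for the spin group: once the $\mathbb{Q}$-space $V$ exists (Hasse--Minkowski, guaranteed by your product-formula constraint), any coherent family $(L_p)_p$ of local lattices agreeing with a fixed lattice almost everywhere is the localization family of the global lattice $L = V \cap \bigcap_p L_p$, which is elementary. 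Strong approximation is the tool for comparing classes \emph{within} a genus, not for showing the genus is nonempty.
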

 The representation problem is the questions  if an integer $n$ is represented by an integral quadratic form in $s$ variables and in how many ways  $n$ is represented by such an integral quadratic form.  The literature on  quadratic forms is extensive and highly developed.  Here we only focus on the case of ternary quadratic forms. Legendre gave   the necessary and sufficient condition of when $n$ is represented by a sum of three squares. Gauss went further, giving an explicit formula of the number of ways to represented $n$ by sum of three squares, which is involved with the class number of binary quadratic forms.  However, there are only a few ternary quadratic forms which have representation formulas like that of sum of three squares.
  Siegel provided a significant quantitative outcome in this regard by presenting  $ r_{\mathrm{gen}(Q)}(n)$,  a weighted average of representations through forms in the genus of  a quadratic form $Q(x)$, as an infinite product of local factors.  There are many literature  on representation of ternary quadratic forms based on Siegel-Weil formula and modular forms.    Recently, X.~Guo, Y.~Peng, L.~Gao and H.~Qin \cite{GPQ14}\cite{GaQ19} gave some explicit formulas for the  average number of representations over the genus of ternary quadratic form  of type $f=x^2+py^2+qz^2$, where $p$ and $q$ are odd primes.  B.~Kane, D.~Kim and S.~Varadharajan \cite{KKV23} computed explicitly  the Siegel–Weil average   $ r_{\mathrm{gen}(Q)}(n)$  of a genus for ternary quadratic forms corresponding to stable lattices.

 Orders of quaternion algebras have a close relation with ternary quadratic forms.  For the detail of the relation between ternary forms and orders of quaternion algebras, we refer to the book of J.~Voight \cite{Voi21}. 
In 1987, B.~Gross \cite{Gro87} showed that,  for definite quaternion algebra ramified at prime $p$, the weighted sum of theta series corresponding to maximal orders is Eisenstein series of weight $3/2$ whose Fourier coefficients are modified Hurwitz class number. In another words,  the modified Hurwitz class number equals a weighted sum of the number of elements of trace $0$ and norm $n$ in an maximal order $\mathcal O_\mu$, where $\mathcal O_\mu$ ranges over the right orders of a set of representatives for left ideal classes of a maximal order $\mathcal O$ in a quaternion algebra of discriminant $p$. In 2019, H.~Boylan, N.~Skoruppa and the second \cite{BSZ19} recovered the formula obtained by Gross, in the more general case of squarefree $N$, using the theory of Jacobi forms. More precisely, they showed that the Jacobi Eisenstein series whose Fourier coefficients are modified Hurwitz class number $H^{(N)}(4n-r^2)$
agrees with a weighted sum of theta series corresponding to a set of representatives for the conjugacy classes of $\mathcal O$. Recently, Y.~Li, N.~Skoruppa and the second author \cite{LSZ22} extended it to more general cases. More specifically, for all Eichler orders with a same squarefree level in a definite quaternion algebra over the field of rational numbers, they proved that a weighted sum of Jacobi theta series associated with these orders is a Jacobi Eisenstein series which has Fourier coefficients   $H^{(N_1,N_2)}(4n-r^2)$. 


Consider the bijections between Eichler orders and ternary quadratic forms and associate  the results of Y.~Li et al.\cite{LSZ22} and their modified Hurwitz class number $H^{(N_1,N_2)}(D)$, we derive  the explicit formula of for the  average number of representations over the genus of definite positive ternary quadratic form of level $4N$.

\begin{theorem}\label{theo:rep}
For any squarefree poisitive integer $N$ and any divisors $\Nodd$ and $\Neven$ of $N$ with an odd respectively even number of prime factors, and for any nonnegative integer $n$, one has
\begin{equation*}
\sum\limits_{f\in G_{4N,N^2/N_r,\Nodd}}\frac{R_f(n)}{|\Aut(f)|}=2^{-s-1}H^{(\Nodd,N/\Nodd)}(4N_rn),
\end{equation*}
\begin{equation*}
\sum\limits_{f\in G_{4N,4N^2/N_r,\Nodd}}\frac{R_f(n)}{|\Aut(f)|}=2^{-s-2}H^{(\Nodd,2N/\Nodd)}(4N_rn),
\end{equation*}
\begin{equation*}
\sum\limits_{f\in G_{4N,4N^2/N_r,2\Neven}}\frac{R_f(n)}{|\Aut(f)|}=2^{-s-2}H^{(2\Neven,N/\Neven)}(4N_rn),
\end{equation*}
and
\begin{equation*}
\sum\limits_{f\in G_{4N,16N^2/N_r,\Nodd}}\frac{R_f(n)}{|\Aut(f)|}=2^{-s-1}H^{(\Nodd,N/\Nodd)}(N_rn).
\end{equation*}
Here as above $R_f(n)$ denotes the number of representations of $n$ by the form $f$, we use
$\Aut(f)$ for the number of  automorphs of $f$ (see before Prop.~\ref{m1}), and $H^{(N_1,N_2)}(D)$ is the modified Hurwitz class number defined by~\eqref{Hurwitz-class-number}in Section~\ref{sec:quaternion-algebra}.  The sums are over a complete set of equivalent classes (see the beginning of Sect.~\ref{ssec:Lemman's-bijection}) in the given genus classes, respectively.
\end{theorem}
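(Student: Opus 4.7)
The plan is to prove Theorem~\ref{theo:rep} by combining three ingredients: the classification of genera established in Theorem~\ref{theo:cla}; the bijection (going back to Eichler and made explicit by Lehman, cf.~\cite{Voi21}) between conjugacy classes of Eichler orders in a definite quaternion algebra over $\mathbb{Q}$ and equivalence classes of primitive positive definite ternary quadratic forms of the corresponding level; and the Jacobi Eisenstein series identity of \cite{LSZ22}, which expresses a weighted sum of Jacobi theta series attached to Eichler orders of a given squarefree level in a fixed definite quaternion algebra as an explicit Jacobi Eisenstein series whose $(n,r)$-Fourier coefficient is a constant multiple of the modified Hurwitz class number $H^{(N_1,N_2)}(4n-r^2)$.

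First I would fix a genus from Theorem~\ref{theo:cla} and translate it into quaternion data. For $G_{4N,N^2/N_r,\Nodd}$ the ternary forms correspond under Lehman's bijection to conjugacy classes of Eichler orders of level $N/\Nodd$ in the definite quaternion algebra $B$ of discriminant $\Nodd$; for $G_{4N,4N^2/N_r,\Nodd}$ to Eichler orders of level $2N/\Nodd$ in the algebra of discriminant $\Nodd$; for $G_{4N,4N^2/N_r,2\Neven}$ to Eichler orders of level $N/\Neven$ in the algebra of discriminant $2\Neven$; and for $G_{4N,16N^2/N_r,\Nodd}$ again to Eichler orders of level $N/\Nodd$ in the algebra of discriminant $\Nodd$, but with the trace-zero norm form rescaled by a factor of two, which is precisely what inflates the ternary discriminant from $N^2/N_r$ to $16N^2/N_r$. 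Under each bijection, $|\Aut(f)|$ equals twice the number of units of the corresponding Eichler order (the factor two accounting for $\pm 1$), and the ternary theta series of $f$ is recovered from the Jacobi theta series of $\mathcal{O}$ by extracting an appropriate Fourier coefficient in the elliptic variable.

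Next I would invoke the main identity of \cite{LSZ22}: the weighted sum over conjugacy classes of Eichler orders of type $(N_1,N_2)$ is a Jacobi Eisenstein series whose $(n,r)$-coefficient equals $H^{(N_1,N_2)}(4n-r^2)$ up to a global constant. Matching the Jacobi coefficients with the ternary $n$-th coefficients through the extraction described above yields the arguments $4N_r n$ and $N_r n$ on the right-hand sides; the scaling factor $N_r$ records the Lehman-style change of discriminant that distinguishes genera with the same anisotropy set but different discriminants, while the reduction from $4n-r^2$ to a single integer argument reflects the specialization of the elliptic variable. The normalization constants $2^{-s-1}$ and $2^{-s-2}$ arise from comparing the weighting $1/|\Aut(f)|$ on the ternary side with the weighting $1/|\mathcal{O}^\times|$ on the Jacobi side, together with the extra $2$-adic compensation that appears only in the two middle cases (where the prime $2$ either divides the Eichler level or the discriminant of $B$).

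The main obstacle will be the careful prime-by-prime bookkeeping needed to verify that the dictionary between a genus of ternary forms (labelled by level, discriminant and anisotropy set) and the data $(B,\mathcal{O})$ of a definite quaternion algebra together with an Eichler order matches the four-case decomposition of Theorem~\ref{theo:cla} exactly. In particular, at the prime $2$ one must pin down how the two possibilities---$2$ ramified in $B$, or $2$ appearing as a factor of the Eichler level---are distinguished by the discriminant invariants $N^2/N_r$, $4N^2/N_r$ and $16N^2/N_r$; and at each odd prime dividing $\Nodd$ one must verify that the anisotropy of the ternary form is equivalent to the ramification of $B$. Once this local dictionary is settled, the four explicit identities follow directly from the Jacobi Eisenstein series identity of \cite{LSZ22}.
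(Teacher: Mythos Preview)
Your proposal is correct and follows essentially the same route as the paper: establish the bijection between conjugacy classes of Eichler orders and equivalence classes of ternary forms in each genus (the paper does this via the maps $M_0$, $M_1$, the even Clifford algebra, and the commutative diagrams linking them through $\phi_p$ and $\lambda_4$), then specialize the Jacobi identity of \cite{LSZ22} at $r=0$ (and additionally at $r=\pm 1$ for the $16N^2/N_r$ case via the lattice $S^0=(\mathbb{Z}+2\mathcal{O})\cap Q^0$), and finally transport to general $N_r$ using the Lehman maps $\phi_p$ together with the identity $R_f(pn)=R_{\phi_p(f)}(n)$. One small correction: the weighting on the quaternion side in \cite{LSZ22} is by $1/\mathrm{card}(\Aut(\mathcal{O}))$ rather than $1/|\mathcal{O}^\times|$, and the paper's bridge is the identity $2\,\mathrm{card}(\Aut(\mathcal{O}))=|\Aut(f_{\mathcal{O}^0})|=|\Aut(f_{S^0})|$; together with $e(NF)=s$ or $s+1$ this produces exactly the constants $2^{-s-1}$ and $2^{-s-2}$ you describe.
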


\begin{remark}\label{re1}
	The conditions hold for $\Neven=1$ and $N_r=1$. When $N=1$, all positive definite ternary quadratic forms of level $4$ are only in one genus which has one class, and the sum of three squares is their representative element
	\begin{equation*}
		r_3(n)=12H^{(2,1)}(4n).
	\end{equation*}
\end{remark}

If the class number of the genus of a positive definite ternary quadratic forms $f$ of level $4N$ is one,  we can give the explicit formula of $R_f(n)$.  
In the table 2 in section 8, we list 73 formulas of $R_f(n)$.

Li et al. \cite{LSZ22} also gave a simple formula for the type number of Eichler orders with squarefree level by modified Hurwitz class number. Based on the type number formula, we can give the formula of the class number  of ternary quadratic forms of level $4N$.
\begin{theorem}\label{theo:class-num}
Let $|C(4N)|$ denote the number of classes of primitive positive definite ternary quadratic forms of level $4N$. Then we have
\begin{align*}
|C(4N)|=2^{s}\left(\frac{N}{6}+\frac{5}{4}-\frac{1}{4}\left(\frac{-4}{N}\right)-\frac{1}{6}\left(\frac{-3}{N}\right)+\frac{1}{2}\left(1-\left(\frac{N}{3}\right)^2\right)+\frac{1}{4}\sum_{\substack{{d\mid N}\\d\neq1}}\left(3H(4d)+H(8d)\right)\right).
\end{align*}
\end{theorem}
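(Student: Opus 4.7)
The plan is to combine the genus decomposition of Theorem~\ref{theo:cla} with the type number formula for Eichler orders of squarefree level from~\cite{LSZ22}. By Theorem~\ref{theo:cla}, the class set of primitive positive definite ternary forms of level $4N$ is partitioned into the $2^{2s+1}$ genera listed there, so $|C(4N)| = \sum_{G} h(G)$, where $h(G)$ is the number of equivalence classes in $G$ and the sum runs over the four families of genera.

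The first step is to replace each $h(G)$ by the type number of the corresponding Eichler order. Via the Brandt-Eichler-Voight correspondence between ternary quadratic forms and quaternion orders (already used in the proofs of Theorems~\ref{theo:cla} and~\ref{theo:rep}), each genus with prescribed level, discriminant, and anisotropy behaviour maps bijectively onto the set of conjugacy (``type'') classes of Eichler orders of a specified squarefree level $(N_1,N_2)$ inside a definite quaternion algebra of prescribed ramification. In particular, the four families $G_{4N,N^2/N_r,\Nodd}$, $G_{4N,4N^2/N_r,\Nodd}$, $G_{4N,4N^2/N_r,2\Neven}$, and $G_{4N,16N^2/N_r,\Nodd}$ correspond respectively to Eichler orders of levels $(\Nodd, N/\Nodd)$, $(\Nodd, 2N/\Nodd)$, $(2\Neven, N/\Neven)$, and $(\Nodd, N/\Nodd)$.

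Next I would insert the explicit type number formula from~\cite{LSZ22}, which expresses $T(N_1,N_2)$ as a leading mass-like linear term, a finite character-theoretic contribution from $2$- and $3$-torsion in the optimal embedding count (involving $\left(\frac{-4}{\cdot}\right)$ and $\left(\frac{-3}{\cdot}\right)$), and a sum of modified Hurwitz class numbers $H(4d)$ and $H(8d)$ over divisors of the level. Substituting and summing each of the four families over all divisors $\Nodd$, $\Neven$, and $N_r$ of $N$, the Hurwitz terms indexed by a common $d\mid N$ with $d\neq 1$ reassemble, after accounting for the $2^s$-fold count of complementary divisors, into the stated sum $\tfrac{1}{4}\sum_{d\mid N,\,d\neq 1}(3H(4d)+H(8d))$, and the global prefactor $2^s$ emerges naturally from the number of families contributing to each fixed $d$.

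The main obstacle will be the final combinatorial and character-theoretic simplification. One has to organize the four parallel type number formulas into a single clean expression, isolate the main linear term $N/6$ from the elementary boundary contributions at the primes $2$ and $3$, and verify that the various local unit factors collapse into the exact combinations $\left(\frac{-4}{N}\right)$, $\left(\frac{-3}{N}\right)$, and $1-\left(\frac{N}{3}\right)^2$ by using multiplicativity of the Kronecker symbols together with the Möbius-type cancellation that occurs when summing over divisors of $N$. The dyadic bookkeeping (comparing the two distinct values $N/\Nodd$ versus $2N/\Nodd$ at the prime $2$) and the exceptional behaviour at $p=3$ inside $H(4d)$ and $H(8d)$ are the places where care is most needed.
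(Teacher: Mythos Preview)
Your proposal is correct and follows essentially the same route as the paper: reduce $|C(4N)|$ via Theorem~\ref{theo:cla} and Corollary~\ref{T} to a sum of Eichler type numbers, then plug in the explicit type number formula of~\cite{LSZ22} and simplify. One clarification that will streamline your computation: the factor $2^s$ drops out immediately from the sum over $N_r$, since by the Lehman bijections $\phi_p$ the class number of each genus is independent of $N_r$; this gives
\[
|C(4N)|=2^{s}\Bigl(2\sum_{\Nodd\mid N}T_{\Nodd,N/\Nodd}+\sum_{\Nodd\mid N}T_{\Nodd,2N/\Nodd}+\sum_{\Neven\mid N}T_{2\Neven,N/\Neven}\Bigr)
\]
before any Hurwitz terms appear, and the remaining work is exactly the evaluation of small-argument values $H^{(N_1,N_2)}(0),H^{(N_1,N_2)}(3),H^{(N_1,N_2)}(4),H^{(N_1,N_2)}(8),H^{(N_1,N_2)}(12)$ and $H^{(N_1,N_2)}(4d)$ for $d\mid N$, as in the paper.
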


In the Appendix B, we give a list of $|C(4N)|$, the number of classes of primitive positive definite ternary quadratic forms of level $4N$  for $N<1000$.


In Section 7, we give three applications of theorem 2. 
Firstly, we  give a new explicit basis of the Eisenstein space $\mathscr{E}(4N, \frac{3}{2}, \chi_l)$, where $\mathscr{E}(4N, \frac{3}{2}, \chi_l)$ is the orthogonal complement of the subspace of cusp forms in the complex linear space of modular forms of weight $3/2$, level $4N$ and  character $\chi_l$ with respect to Petersson inner product. 
Secondly, we give a new proof of Berkovich and Jagy’s genus identity \cite{BJa12} 
\begin{equation}
r_3(p^2n)-pr_3(n)=48\sum\limits_{f\in TG_{1,p}}\frac{R_{f}(n)}{|\Aut(f)|}-96\sum\limits_{f\in TG_{2,p}}\frac{R_{f}(n)}{|\Aut(f)|}.
\end{equation}
Finally, we give a new proof of Du’s interesting equality \cite{Du16} for the case positive ternary quadratic forms,
\begin{equation}
-\frac{2}{q-1}r_{Dp,N}(m)+\frac{q+1}{q-1}r_{Dp,Nq}(m)=-\frac{2}{p-1}r_{Dq,N}(m)+\frac{p+1}{p-1}r_{Dq,Np}(m).
\end{equation}

 In Section 2 we recall some tools on ternary quadratic form. In Section 3 we introduce the basic konwledge on definite quaternion algebras. In Section 4, we give some relations between ternary quadratic forms and Eichler orders. In Section 5, we will prove Theorem 1.1. In Section 6, we will discuss the connection among $3$ bijections between Eichler orders and ternary quadratic forms and give proof of Theorem 1.2 and Theorem 1.3. In Section 7, we give some applications. In Section 8, we give some representation numbers of ternary quadratic forms in the tables, and some calculations are aided by Sagemath.

\section{Ternary quadratic forms}
\subsection{Level and discriminant}
 Let $f$ be a ternary quadratic form with integer coefficients, given by the equation
\begin{equation*}
f(x,y,z)=ax^2+by^2+cz^2+ryz+sxz+txy. 
\end{equation*}
Define the divisor of $f$ to be the positive integer
\begin{equation*}
m=m_f=\text{gcd}(M_{11},M_{22},M_{33},2M_{23},2M_{13},2M_{12}),
\end{equation*}
where
\begin{center}
$M_{11}=4bc-r^2,M_{23}=st-2ar=M_{32},$\\
$M_{22}=4ac-s^2,M_{13}=rt-2bs=M_{31},$\\
$M_{33}=4ab-t^2,M_{12}=rs-2ct=M_{21}.$
\end{center}
The level of $f$ is   equivalent to
$
4d_f/m_f $  \cite[pp.401-402]{Leh92}.

 There is a connection between level and discriminant, as follows.
\begin{theorem}\label{Leh1}\cite[Theorem 2]{Leh92}
Let $f$ be a primitive positive definite ternary quadratic form of level $N'$ and discriminant $d$. 
\begin{equation}
N'=2^{n_0}p_1^{n_1}...p_k^{n_k}\label{N}
\end{equation}
is the prime factorization of $N'$. Then $n_0\geq2$ and $d$ is of the form 
\begin{equation}
d=2^{d_0}p_1^{d_1}...p_k^{d_k}\label{d}
\end{equation}
with the following restrictions on exponents:
\par (1) $d_0=n_0-2,d_0=2n_0,$ or $n_0\leq d_0\leq 2n_0-2$, and
\par (2) for $1\leq i\leq k, n_i\leq d_i\leq 2n_i$\\
Furthermore, if $n_i$ is even for $0\leq i\leq k$, then either $n_0\leq d_0\leq 2n_0-2$, or $d_i$ is odd for some $1\leq i\leq k$.
\end{theorem}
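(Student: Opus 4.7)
The plan is to reduce the statement to a $p$-adic analysis. Since both $d_f=\det(M_f)/2$ and the divisor $m_f$ are each determined prime-by-prime by the $\mathbb{Z}_p$-equivalence class of $M_f$, so is the level $N'=4d_f/m_f$. Hence it suffices to verify the claimed bounds between $n_i:=v_{p_i}(N')$ and $d_i:=v_{p_i}(d_f)$ one prime at a time, via Jordan decomposition of the Gram lattice over each $\mathbb{Z}_p$.

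At an odd prime $p$, a nondegenerate ternary $\mathbb{Z}_p$-lattice is diagonalizable, so up to equivalence $f\cong p^{a_1}u_1 x^2+p^{a_2}u_2 y^2+p^{a_3}u_3 z^2$ with $u_j\in\mathbb{Z}_p^{\times}$ and $0\le a_1\le a_2\le a_3$; primitivity forces $a_1=0$. Then $v_p(d_f)=a_2+a_3$, while the cofactor matrix $M_f^{*}$ has diagonal entries of valuations $a_2+a_3,\ a_1+a_3,\ a_1+a_2$ and vanishing off-diagonals, so $v_p(m_f)=a_1+a_2=a_2$ and $v_p(N')=v_p(d_f)-v_p(m_f)=a_3$. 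Thus $n_i=a_3$ and $d_i=a_2+a_3$, from which the inequality $n_i\le d_i\le 2n_i$ of (2) is immediate.

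At $p=2$ one must use the full $2$-adic Jordan decomposition, which in addition to unary blocks $2^a u x^2$ with $u\in\mathbb{Z}_2^{\times}$ permits binary hyperbolic and binary ``$A_2$-type'' blocks scaled by $2^a$. I would enumerate the finitely many primitive rank-$3$ Jordan shapes over $\mathbb{Z}_2$; for each one, read off $v_2(d_f)$ and the $2$-adic valuation of $m_f=\gcd$ of the entries of the cofactor matrix. The three alternatives $d_0=n_0-2$, $d_0=2n_0$, and $n_0\le d_0\le 2n_0-2$ correspond, respectively, to a binary block carrying the smallest scaling together with a unary piece, to a purely diagonal shape in which two of the valuations coincide, and to the remaining diagonal shapes. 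The bound $n_0\ge 2$ arises from the extra factor of $4$ in $4d_f/m_f$ combined with the integrality constraints on $M_f^{-1}$ forced by the even-diagonal condition in the definition of the level.

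The last sentence of the theorem is the subtle point: if every $n_i$ is even and every $d_i$ with $i\ge 1$ is even, then the odd-prime analysis above pins each local form into a ``square'' Jordan shape with $a_2=a_3$, which in turn fixes the local Hasse invariants at all odd primes. The product formula $\prod_v c_v(f)=1$ for Hasse invariants over $\mathbb{Q}$ then forces a compatible local invariant at $p=2$, and a short check shows that this rules out the boundary cases $d_0=n_0-2$ and $d_0=2n_0$, leaving only $n_0\le d_0\le 2n_0-2$. The main obstacle is precisely this last step: the odd-prime case and the trichotomy at $2$ are bookkeeping once the Jordan form is in hand, but the parity clause genuinely requires invoking local-global reciprocity of the Hasse invariants.
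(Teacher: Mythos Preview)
The paper does not prove this theorem at all: it is quoted verbatim from Lehman \cite[Theorem~2]{Leh92} and used as input. So there is no ``paper's own proof'' to compare against; your sketch is being measured against Lehman's original argument, which proceeds by a direct case analysis of $\mathbb{Z}_p$-Jordan forms much as you propose.

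Your odd-prime paragraph is correct and clean. The $2$-adic paragraph, however, contains a concrete error in the dictionary between Jordan shapes and the three alternatives for $d_0$. For a purely diagonal $\mathbb{Z}_2$-form $u_1x^2+2^{a_2}u_2y^2+2^{a_3}u_3z^2$ with $0=a_1\le a_2\le a_3$, one computes $v_2(d_f)=2+a_2+a_3$ and $v_2(m_f)=2+a_2$, hence $n_0=2+a_3$ and $d_0=2+a_2+a_3$; this always lands in the middle range $n_0\le d_0\le 2n_0-2$, with equality $d_0=2n_0-2$ precisely when $a_2=a_3$. It never gives $d_0=2n_0$. The case $d_0=2n_0$ instead comes from a unary block at scale $0$ together with a binary (type~II) block at a higher scale, the mirror image of your description of the $d_0=n_0-2$ case. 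So your enumeration needs to be redone.

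For the parity clause, your idea is right but the statement is garbled. From $n_i=a_3$ even and $d_i=a_2+a_3$ even you get that $a_2$ and $a_3$ are \emph{both even}, not that $a_2=a_3$; this is what makes $f$ unimodular over $\mathbb{Q}_p$ at each odd $p$, hence isotropic there, hence $S_p^*(f)=+1$. Reciprocity then forces $S_2^*(f)=-1$. The ``short check'' you defer is the real content: you must verify that, under the hypothesis $n_0$ even, every $\mathbb{Z}_2$-Jordan shape realizing $d_0=n_0-2$ or $d_0=2n_0$ is isotropic over $\mathbb{Q}_2$. This is true (in both boundary cases the relevant scale difference is even, so the binary block can absorb the unary part over $\mathbb{Q}_2$), but it is exactly the computation that makes the theorem nontrivial, and you have not done it.
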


  The above theorem establishes that discriminants of primitive positive definite ternary quadratic forms of level $4N$ belong to one of three categories: $N^2/N_r$, $4N^2/N_r$ and $16N^2/N_r$. In terms of the equivalent form of the level, it is ease to show that the positive definite ternary quadratic forms of level $4N$ and discriminant $N^2/N_r$ or $4N^2/N_r$ or $16N^2/N_r$  must be primitive.

\subsection{Lehman's bijection}\label{ssec:Lemman's-bijection}
Ternary quadratic forms $f$ and $g$ are said to be equivalent, denoted as $f\sim g$, if there exists a matrix $M\in GL_3(\mathbb{Z})$ (i.e., with integer entries and $\det(M)=\pm1$) such that $M_{f}=MM_{g}M^{t}$. Equivalent forms are said to belong to the same class. If $f \sim  g$, then $d_f=d_g$ and $N_f=N_g$\cite[pp.401-402]{Leh92}. Let us  denote by $C(N,d)$ the set of all classes of primitive positive definite ternary quadratic forms of level $N$ and discriminant $d$.
\begin{theorem}\label{Leh2}\cite[Theorem 4, Theorem 5]{Leh92}
Let $N'$ and $d$ be given by equations (\ref{N}) and (\ref{d}). Suppose that $p^g\parallel N'$ and $p^h\parallel d$ for some odd prime p. Write $d$ as $p^hd'$. Then there is a one-to-one correspondence between $C(N',p^hd')$ and $C(N',p^{3g-h}d')$. If $p=2$, then there is a one-to-one correspondence between $C(N',2^hd')$ and $C(N',2^{3g-h-2}d')$.
\end{theorem}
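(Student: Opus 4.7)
The plan is to exhibit an explicit involution $\phi\colon [f]\mapsto [f^*]$ between the two class sets by modifying the local Jordan decomposition of $M_f$ at the prime $p$ while leaving the form unchanged at every other prime; the fact that the map is involutive will give the bijection automatically.

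First I would analyze the local structure at $p$. For odd $p$, the hypotheses that $f$ is primitive, $p^g\parallel N'$, and $p^h\parallel d$ force the $\mathbb{Z}_p$-Jordan decomposition to have the shape $M_f = U D U^t$ with $U\in GL_3(\mathbb{Z}_p)$ and $D = \mathrm{diag}(2\epsilon_1,\,2\epsilon_2 p^{h-g},\,2\epsilon_3 p^g)$ for units $\epsilon_i\in\mathbb{Z}_p^\times$: primitivity pins the smallest Jordan exponent to zero, the level condition pins the largest to $g$, and the three exponents sum to $h$. Define the dual local matrix $\tilde D := \mathrm{diag}(2\epsilon_1 p^g,\,2\epsilon_2 p^{2g-h},\,2\epsilon_3)$, obtained by swapping each Jordan exponent $a_i$ with $g-a_i$. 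Using the Chinese Remainder Theorem on symmetric $3\times 3$ integer matrices, one constructs a global integer matrix $M_{f^*}$ whose $\mathbb{Z}_p$-completion is equivalent to $\tilde D$ and whose completion at each prime $q\neq p$ agrees with $M_f$.

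Next I would verify that $[f^*]\in C(N',p^{3g-h}d')$: a direct Jordan-form computation gives $v_p(d_{f^*}) = \sum_i(g-a_i) = 3g-h$ and $v_p(\text{level of }f^*) = \max_i(g-a_i) = g$, while the invariants at every other prime are unchanged. Positive-definiteness and primitivity of $f^*$ are immediate from the local model (in particular, the smallest Jordan exponent of $\tilde D$ is $g-a_3=0$). I would then check that $\phi$ is well-defined on $\mathbb{Z}$-equivalence classes (the construction depends only on the $\mathbb{Z}_p$-equivalence class of $M_f$ and on $M_f$ itself at other primes) and involutive (swapping $a_i$ with $g-a_i$ twice recovers $a_i$), so that $\phi\circ\phi = \mathrm{id}$ and $\phi$ is a bijection.

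For $p=2$ the same scheme works, but the $\mathbb{Z}_2$-Jordan decomposition may contain indecomposable $2\times 2$ blocks of hyperbolic type $\bigl(\begin{smallmatrix}0 & 2^a\\ 2^a & 0\end{smallmatrix}\bigr)$ or anisotropic type $\bigl(\begin{smallmatrix}2^{a+1} & 2^a\\ 2^a & 2^{a+1}\end{smallmatrix}\bigr)$ in addition to diagonal summands, each requiring its own dualization rule; moreover the shift $3g-h-2$ in place of $3g-h$ arises from the normalization $d = \det(M_f)/2$, so that the $2$-valuation of $\det(M_f)$ exceeds that of $d$ by one and the dual discriminant picks up an extra factor of $4$. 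The main obstacle will be precisely this $2$-adic analysis: one must enumerate the admissible block configurations (as constrained by Theorem~\ref{Leh1}) and check block-by-block that the dualization preserves primitivity, positive-definiteness, and yields the correct level and discriminant. A secondary subtlety, present for all $p$, is that definite ternary forms lack strong approximation, so the patching step needs a separate argument ensuring that different admissible global matrices produced in this way are $\mathbb{Z}$-equivalent.
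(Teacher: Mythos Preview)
The underlying idea—swap the local Jordan exponents $a_i\mapsto g-a_i$ at $p$—is exactly the mechanism behind Lehman's map, and your odd-$p$ local analysis is correct. The gap is in the passage from local to global. Your patched matrix $M_{f^*}$, built so that its $\mathbb{Z}_p$-class is $\tilde D$ and its $\mathbb{Z}_q$-class ``agrees with $M_f$'' for $q\neq p$, is determined only up to genus, not up to $\mathbb{Z}$-class. You write that the construction ``depends only on the $\mathbb{Z}_p$-equivalence class of $M_f$ and on $M_f$ itself at other primes'', but that means it depends on the chosen representative $M_f$, and two $\mathbb{Z}$-equivalent representatives can yield inequivalent patched outputs. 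You flag this as a ``secondary subtlety'', but for definite ternaries (no strong approximation) it is the whole difficulty: without resolving it you have a bijection of genera, not of classes.

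The paper, following Lehman, sidesteps the patching problem by never leaving the global setting. Lemma~\ref{Leh3} furnishes, in each class of $C(N',p^hd')$, an honest integral representative of the shape $(p^ga,\,p^{h-g}b,\,c,\,p^{h-g}r,\,p^gs,\,p^gt)$ with $p\nmid ac$; then $\phi_p$ is the explicit assignment
\[
(p^ga,\,p^{h-g}b,\,c,\,p^{h-g}r,\,p^gs,\,p^gt)\;\longmapsto\;(a,\,p^{2g-h}b,\,p^gc,\,p^gr,\,p^gs,\,p^{2g-h}t),
\]
i.e.\ $M_f\mapsto PM_fP$ with $P=\mathrm{diag}(p^{-g/2},p^{(3g-2h)/2},p^{g/2})$. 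Well-definedness on classes is then an elementary matrix check: if $U\in GL_3(\mathbb{Z})$ takes one normal-form representative to another, one verifies directly that $P^{-1}UP\in GL_3(\mathbb{Z})$ (the paper carries out exactly this style of argument for $\lambda_4$ in Proposition~\ref{OQTC}). No CRT, no local-global gluing. A smaller point: your explanation of the $2$-adic shift $3g-h-2$ is not right—the normalization $d=\det(M_f)/2$ affects source and target symmetrically; the $-2$ reflects the asymmetric range of admissible $2$-exponents in Theorem~\ref{Leh1} (namely $d_0\in\{n_0-2\}\cup[n_0,2n_0-2]\cup\{2n_0\}$), under which $d_0\mapsto 3n_0-d_0-2$ is the correct involution.
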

Before we describe this correspondence, we need the following lemma.
\begin{lemma}\label{Leh3}\cite[Lemma 2]{Leh92}
Let $f$ be a primitive positive definite ternary quadratic forms of level $N'$ and divisor $m$. Suppose that $p^i\parallel N'$ and $p^j\parallel m$ for some odd prime $p$ and positive integer $i$. Then $f$ is equivalent to a form $(a,b,c,r,s,t)$ with $p^i\parallel a$, $p^i\mid s$ and $t$, $p^j\mid b$ and $r$, and $p\nmid c$. If $0<j<i$, then we can assume that $p^j\parallel b$.
\end{lemma}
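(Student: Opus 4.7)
The plan is to combine the $p$-adic Jordan decomposition of $f$ (available because $p$ is odd) with an approximation argument over $\mathbb{Z}$. First, I determine the Jordan invariants at $p$: primitivity of $f$ forces the minimum Jordan scale to be $0$; the hypothesis $p^i \parallel N'$ forces the largest scale to equal $i$; and the relation $N' = 4d_f/m_f$, combined with $p^j \parallel m_f$, forces the three scales to be $(0, j, i)$. Hence over $\mathbb{Z}_p$ there is an orthogonal basis $e_1', e_2', e_3'$ with $f(e_1') = u_1$, $f(e_2') = p^j u_2$, $f(e_3') = p^i u_3$, where $u_k \in \mathbb{Z}_p^{\times}$.

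Next I realize this $p$-adic structure integrally. Choose a primitive vector $v_1 \in \mathbb{Z}^3$ with $v_1 \equiv e_3' \pmod{p^{i+1}}$: a $p$-adic approximation of $e_3'$ yields an integer lift, and the Chinese Remainder Theorem allows modifying by an element of $p^{i+1}\mathbb{Z}^3$ to eliminate common factors at the finitely many other primes. Setting $a = f(v_1)$ gives $p^i \parallel a$, since $f(v_1) \equiv f(e_3') = p^i u_3 \pmod{p^{i+1}}$. Extend $v_1$ to a $\mathbb{Z}$-basis $(v_1, w_2, w_3)$ of $\mathbb{Z}^3$; in this basis $f$ is $(a, b_0, c_0, r_0, s_0, t_0)$. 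The $p$-adic orthogonality $B(e_3', e_1') = B(e_3', e_2') = 0$, together with $v_1 \equiv e_3' \pmod{p^{i+1}}$, forces $v_p(B(v_1, w)) \geq i$ for every $w \in \mathbb{Z}^3$; hence $p^i$ divides both $s_0 = 2B(v_1, w_3)$ and $t_0 = 2B(v_1, w_2)$.

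Now I apply the same procedure to the rank-$2$ restriction. The orthogonal complement of $v_1$ in $\mathbb{Z}_p^3$ has Jordan scales $(0, j)$; since $v_p(B(v_1, w_k)) \geq i = v_p(f(v_1))$, this complement equals $\mathbb{Z}_p(w_2 - \alpha_2 v_1) + \mathbb{Z}_p(w_3 - \alpha_3 v_1)$ for suitable $\alpha_k \in \mathbb{Z}_p$, and the restriction of $f$ to $\mathbb{Z}_p w_2 + \mathbb{Z}_p w_3$ inherits the same Jordan scales (verifiable via its discriminant and primitivity in rank $2$). Repeating the local-to-global construction in rank $2$, I find a primitive $w_3^{\ast} \in \mathbb{Z} w_2 + \mathbb{Z} w_3$ with $f(w_3^{\ast})$ coprime to $p$, and complete it to a basis $(w_2^{\ast}, w_3^{\ast})$ of $\mathbb{Z} w_2 + \mathbb{Z} w_3$. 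The resulting basis $(v_1, w_2^{\ast}, w_3^{\ast})$ of $\mathbb{Z}^3$ yields $f = (a, b, c, r, s, t)$ satisfying $p^i \parallel a$, $p \nmid c$, $p^i \mid s, t$ (preserved since $v_1$ is fixed and all adjustments stay in $\operatorname{span}(w_2, w_3)$), and $p^j \mid b, r$ (by the analogous orthogonality argument in the rank-$2$ piece).

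For the refinement $p^j \parallel b$ under $0 < j < i$, expand $d_f = 4abc + rst - ar^2 - bs^2 - ct^2$: every summand other than $4abc$ has $p$-adic valuation strictly exceeding $i + j$ (from $v_p(r) \geq j$, $v_p(s), v_p(t) \geq i$, and $0 < j < i$), so $v_p(d_f) = v_p(4abc) = i + v_p(b)$, and comparison with $v_p(d_f) = i + j$ yields $v_p(b) = j$. The main obstacle is the transition from the $\mathbb{Z}_p$-Jordan basis to a primitive $\mathbb{Z}$-basis realizing the desired shape: simultaneously maintaining high-precision $p$-adic agreement, global primitivity, and the basis property requires a careful combination of $p$-adic approximation and the Chinese Remainder Theorem; once this is arranged, the divisibility claims follow cleanly from the $p$-adic orthogonality inherent in the Jordan decomposition.
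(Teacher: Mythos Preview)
The paper does not supply a proof of this lemma; it is quoted verbatim from Lehman \cite[Lemma~2]{Leh92} as an auxiliary tool, so there is no in-paper argument to compare against. Your approach via the $p$-adic Jordan decomposition combined with integral approximation is sound and gives a clean conceptual route to the result: the identification of the Jordan scales as $(0,j,i)$ is correct (primitivity gives the bottom scale $0$; $v_p(m_f)=j$ gives the middle scale, since for a diagonal form the divisor picks out the sum of the two smallest scales; and $N'=4d_f/m_f$ then forces the top scale to be $i$), and the lift of the top-scale Jordan vector to a primitive integral $v_1$ with $p^i\parallel f(v_1)$ and $v_p(B(v_1,\cdot))\ge i$ works as you describe. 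The final discriminant computation for $p^j\parallel b$ under $0<j<i$ is also correct.

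There is, however, a small gap in the rank-$2$ step. You approximate the \emph{low}-scale Jordan vector by $w_3^{\ast}$ and then complete to an arbitrary basis $(w_2^{\ast},w_3^{\ast})$, invoking ``the analogous orthogonality argument'' to conclude $p^j\mid b,r$. But that orthogonality argument only applies when the approximated vector is the \emph{high}-scale one: if $w_2^{\ast}$ is an arbitrary completion of $w_3^{\ast}$, its $e_1''$-component over $\mathbb{Z}_p$ need not vanish modulo $p$, so $b=f(w_2^{\ast})$ need not be divisible by $p$ at all. The fix is immediate and mirrors what you did in rank $3$: approximate instead the scale-$j'$ Jordan vector by a primitive $w_2^{\ast}$; then $v_p(B(w_2^{\ast},\cdot))\ge j'\ge j$ gives $p^j\mid b,r$, and any completion $w_3^{\ast}$ automatically satisfies $p\nmid f(w_3^{\ast})$, since its $e_1''$-coordinate must be a $p$-adic unit for $(w_2^{\ast},w_3^{\ast})$ to be a $\mathbb{Z}_p$-basis. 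With this correction your argument is complete.
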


\begin{remark}\label{rem1}
Let $N$ be a product of distinct odd primes. For $f\in C(4N,N)$,  since $N=4d/m$, then $m=1$.  Hence, according to the Lemma \ref{Leh3} we can assume that $f$ is equivalent to a form $(Na,b,c,r,Ns,Nt)$  with $(N,ac)=1$. 
\end{remark}

Let $f \in C(N',p^hd')$, $p^g\parallel N'$ and $p \nmid d'$. In terms of Lemma \ref{Leh3}, we can assume that 
\begin{equation*}
f=(p^ga,p^{h-g}b,c,p^{h-g}r,p^gs,p^gt),
\end{equation*}
with $a, b, c, r, s$ and $t$ integers, $p\nmid ac$. Define the map
\begin{align*}
\phi_p :C(N',p^hd') & \to C(N',p^{3g-h}d')\\
f& \mapsto (a,p^{2g-h}b,p^gc,p^{g}r,p^gs,p^{2g-h}t)\\
M_f & \mapsto PM_{f}P,
\end{align*}
 where 
 \begin{equation*}
 P=
\begin{pmatrix}
p^{-g/2} & 0& 0\\
0 & p^{(3g-2h)/2}& 0\\
0 & 0& p^{g/2}
\end{pmatrix}.
\end{equation*}

Lehman proved that $\phi_p$ is a one-to-one correspondence.  
In particular, for $p|N$,  maps  $ \phi_{p}  :C(4N, 4^u N^2)  \to C(4N,4^u N^2/p), u\in\{0,1,2\} $ are also  one-to-one correspondences. Let $N_r= p_1p_2\cdots p_r$ be a divisor of $N$. 
   Composite maps $\phi_{p_1}\circ...\circ\phi_{p_r} :C(4N,4^uN^2)  \to C(4N,4^uN^2/N_r)$ are also one-to-one correspondences.

For $p=2$, in terms of Lemma \ref{Leh3}, the map $\phi_{2} :C(4N,N^2/N_r) \to C(4N,16N^2/N_r).$

Two integral quadratic forms are said to be semi-equivalent if they are equivalent over the $p$-adic integers for all primes $p$, and are equivalent over the real numbers. Semi-equivalent forms are said to be in the same genus of forms. Equivalent forms are semi-equivalent, so we may speak of a class of forms as belonging to a genus \cite[pp.409-410]{Leh92}. We denote $\Aut(f)$ the finite group of integral automorphs of $f$ (that is, $\Aut(f)$ is the set of all $M\in GL_3(\mathbb{Z})$ such that $M_f=M^{t}M_fM$).

It is the same as Theorem \ref{Leh2} to prove the following proposition.
\begin{prop}\label{m1}
	Let $p$ be a prime, and $f\in C(N',p^hd')$ where $p\nmid d'$, we have
	\par (1) $|\Aut(f)| =|\Aut(\phi_p(f))|$,
	\par (2) if $f$ and $g$ are semi-equivalent, then $\phi_p(f)$ and $\phi_p(g)$ are semi-equivalent.
\end{prop}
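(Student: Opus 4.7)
The plan is to follow the template of Lehman's proof of Theorem~\ref{Leh2}: realize both correspondences through the single conjugation $W\mapsto PWP^{-1}$ and verify that it stays integral. Since $P=p^{-g/2}P'$ with $P':=\mathrm{diag}(1,p^{2g-h},p^g)$ an integer diagonal matrix, this assignment coincides with $W\mapsto P'W(P')^{-1}$, so it is a well-defined rational operation independent of the formal $\sqrt{p}$ inside $P$. Using $M_{\phi_p(f)}=PM_fP$ together with the symmetry of $P$, the identity $M_f=UM_fU^t$ transforms automatically into $M_{\phi_p(f)}=(PUP^{-1})M_{\phi_p(f)}(PUP^{-1})^t$, and likewise $M_f=VM_gV^t$ becomes $M_{\phi_p(f)}=(PVP^{-1})M_{\phi_p(g)}(PVP^{-1})^t$; the substantive content in both parts is therefore integrality of the conjugate.

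For part~(1), writing $U=(u_{ij})\in\Aut(f)$, one has $(P'U(P')^{-1})_{ij}=u_{ij}\cdot P'_{ii}/P'_{jj}$. The lower-triangular entries are integral for free from the range $g\le h\le 2g$ of Theorem~\ref{Leh1}, so integrality reduces to the three divisibilities $p^{2g-h}\mid u_{12}$, $p^g\mid u_{13}$, $p^{h-g}\mid u_{23}$. These are derived, exactly as in Lehman's proof of Theorem~\ref{Leh2}, by expanding $(UM_fU^t)_{ij}=(M_f)_{ij}$ with $f$ in the normalized shape of Lemma~\ref{Leh3} and reducing modulo increasing powers of $p$; the key is that $(M_f)_{33}=2c$ has $p\nmid c$, providing a unit pivot to invert the dominant term in each reduction. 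The inverse assignment $V\mapsto (P')^{-1}VP'$ is integral by the symmetric argument, so the two maps are mutually inverse bijections between $\Aut(f)$ and $\Aut(\phi_p(f))$.

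For part~(2), let $V\in GL_3(\mathbb{Z}_q)$ realize $M_f=VM_gV^t$ locally at $q$; the formal identity above already gives $M_{\phi_p(f)}=WM_{\phi_p(g)}W^t$ with $W:=P'V(P')^{-1}$, so only $W\in GL_3(\mathbb{Z}_q)$ is needed. At $q\ne p$, $\det P'=p^{3g-h}$ is a unit in $\mathbb{Z}_q$, hence $P',(P')^{-1}\in GL_3(\mathbb{Z}_q)$ and $W$ is automatically $q$-integral. The main obstacle is $q=p$: one places $f$ and $g$ simultaneously in the Lemma~\ref{Leh3} form over $\mathbb{Z}_p$ and reruns the divisibility calculation of part~(1) with $\mathbb{Z}_p$ replacing $\mathbb{Z}$; the reduction is structurally identical (same unit pivot $p\nmid c$, same normalized $p$-adic shape on the target), but some care is needed in tracking $p$-adic valuations on both sides since the off-diagonal entries of $f$ and $g$ need not agree. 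Finally, equivalence over $\mathbb{R}$ is preserved because $P$ is a positive real diagonal matrix, so $\phi_p$ is a real congruence on positive definite forms.
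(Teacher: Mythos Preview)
Your proposal is correct and follows exactly the approach the paper intends: the paper's proof is the single sentence ``It is the same as Theorem~\ref{Leh2} to prove the following proposition,'' and you have unpacked precisely that reference by realizing the correspondence as $U\mapsto P'U(P')^{-1}$ and checking integrality via Lehman's divisibility argument on the normalized form. Your additional remarks (the automatic $\mathbb{Z}_q$-integrality at $q\ne p$, the $\mathbb{Z}_p$-rerun of Lehman's calculation at $q=p$, and preservation of positive definiteness over $\mathbb{R}$) are all correct and simply make explicit what the paper leaves implicit.
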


\subsection{Watson transformation}

Recalling Watson transformation \cite{Wat62}, let $\Lambda_f$ be a 3-dimensional lattice, where $f$ is $\Lambda_f$ correspondence integral ternary quadratic form. Define $\Lambda_m(f)$ to be the set of all $\boldsymbol{x}$ in $\Lambda_f$ with
\begin{equation*}
f(\boldsymbol{x}+\boldsymbol{z})\equiv f(\boldsymbol{z})\pmod m, \forall \boldsymbol{z}\in\Lambda_f.
\end{equation*}
$\Lambda_m(f)$ is a 3-dimensional lattice \cite[p.578]{Wat62}. Hence we can choose $M$ so that $\boldsymbol{x}$ is in $\Lambda_m(f)$ if and only if $\boldsymbol{x}=M\boldsymbol{y},  y \in \Lambda_f$. Let $g(\boldsymbol{y})=m^{-1}f(M\boldsymbol{y})$, that is
\begin{equation*}
\frac{1}{2}\boldsymbol{y}^{t}M_g\boldsymbol{y}=\frac{1}{2}\cdot m^{-1}(M\boldsymbol{y})^tM_f(M\boldsymbol{y}),\boldsymbol{y}\in\Lambda_f.
\end{equation*}

\begin{theorem}\cite[p.579]{Wat62}\label{Watsontrans}
Watson transformation $\lambda_m$ is a well-define mapping, that is if $f\sim g$, then $\lambda_m(f)\sim \lambda_m(g)$, and if $f$  and $g$ are semi-equivalent, then $\lambda_m(f)$  and $\lambda_m(g)$ are semi-equivalent. Furthermore, the class number does not increase under the Watson transformation, that is,  $\lambda_m(f)$ can range over the whole of the genus of $\lambda_m(f)$.
\end{theorem}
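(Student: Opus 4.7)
My plan is to split the theorem into three independent assertions and prove each in turn: (a) $\lambda_m$ respects $\mathbb{Z}$-equivalence, (b) $\lambda_m$ respects semi-equivalence, and (c) $\lambda_m$ maps the genus of $f$ surjectively onto the genus of $\lambda_m(f)$. The first two are soft functoriality statements; the third is the real content.

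For (a), the key observation is that the sublattice $\Lambda_m(f) \subset \Lambda_f$ is \emph{intrinsically} characterised by the quadratic form $f$ on $\Lambda_f$: it is the preimage under the reduction map $f : \Lambda_f \to \mathbb{Z}/m\mathbb{Z}$ of the radical of the associated bilinear form mod $m$. Hence if $\phi \in GL(\Lambda_f, \Lambda_g)$ satisfies $g \circ \phi = f$, then $\phi$ necessarily maps $\Lambda_m(f)$ onto $\Lambda_m(g)$. After dividing by $m$, this gives an isometry $(\Lambda_m(f), m^{-1}f|_{\Lambda_m(f)}) \cong (\Lambda_m(g), m^{-1}g|_{\Lambda_m(g)})$, i.e.\ $\lambda_m(f) \sim \lambda_m(g)$. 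Concretely, if $f = g \cdot U$ for $U \in GL_3(\mathbb{Z})$ and $M_f, M_g$ are the matrices that cut out $\Lambda_m(f)$ and $\Lambda_m(g)$ respectively, then $U M_f$ and $M_g$ generate the same sublattice, which yields an explicit unimodular matrix conjugating the two transformed forms.

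For (b), the construction of $\Lambda_m(f)$ is local at each prime $p$: indeed $\Lambda_m(f) \otimes \mathbb{Z}_p = \Lambda_{m_p}(f \otimes \mathbb{Z}_p)$ where $m_p$ is the $p$-part of $m$ (one checks this by writing out the congruence condition prime by prime). So if $f$ and $g$ are equivalent over $\mathbb{Z}_p$ for every $p$ and over $\mathbb{R}$, the argument of (a) applied in each $\mathbb{Z}_p$ produces local isometries between $\lambda_m(f) \otimes \mathbb{Z}_p$ and $\lambda_m(g) \otimes \mathbb{Z}_p$; the real place is handled automatically because both transformed forms remain positive definite.

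For (c), which I expect to be the main obstacle, the strategy is again to reduce to a local problem. Given any $h$ in the genus of $\lambda_m(f)$, I would choose a genus representative $h_p$ at each prime $p$ and lift it to a local lattice $\widetilde{h}_p$ with $\lambda_{m_p}(\widetilde{h}_p) \cong h_p$. The existence of such a lift is the crux: at primes $p \nmid m$ one has $\Lambda_{m_p}(f) = \Lambda_f$ and the map is the identity, while at primes $p \mid m$ one must invert the Watson operation locally, which amounts to specifying the appropriate Jordan constituents of the rescaled lattice. Once these local lifts are assembled, strong approximation for the orthogonal group (or, equivalently, the classical fact that every collection of compatible local lattices in a fixed rational quadratic space glues to a global lattice in the genus) produces a global form $g$ in the genus of $f$ whose localisations match the chosen $\widetilde{h}_p$; by (b) one then has $\lambda_m(g)$ in the genus of $h$ with $\lambda_m(g)_p \cong h_p$ for all $p$, hence $\lambda_m(g) \sim h$. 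The class-number inequality is a formal consequence: the induced map on genus classes is surjective.
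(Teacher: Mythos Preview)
The paper does not supply its own proof of this theorem: it is quoted from Watson \cite[p.~579]{Wat62} and used as a black box. So there is nothing in the paper to compare your argument against; what follows is a comment on the argument itself.

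Parts (a) and (b) are fine: the sublattice $\Lambda_m(f)$ is intrinsically determined by the quadratic form modulo $m$, so any global (resp.\ local) isometry carries $\Lambda_m(f)$ to $\Lambda_m(g)$, and the construction localises prime by prime.

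Part (c) has a genuine gap. Your final step reads ``$\lambda_m(g)_p \cong h_p$ for all $p$, hence $\lambda_m(g) \sim h$''. But local isometry at every place is exactly the definition of being in the same \emph{genus}, not the same \emph{class}; as written you have only shown that $\lambda_m$ maps the genus of $f$ into the genus of $\lambda_m(f)$, which you already knew from (b). The missing idea is that one must work inside a fixed ambient rational quadratic space and arrange an \emph{equality} of lattices, not merely a local isometry. Concretely: realise $h$ as an actual lattice $L'$ in the space $V$ underlying $f$ (with the form scaled by $m$). For each prime $p$ the local isometry $L'_p \cong \Lambda_m(\Lambda_f)_p$ is induced by some element of $O(V_p)$; use that element to transport $(\Lambda_f)_p$ to a lattice $L_p$ containing $L'_p$ with $\Lambda_m(L_p) = L'_p$ \emph{exactly}. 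Since $L'_p = \Lambda_m(\Lambda_f)_p$ for almost all $p$, one may take $L_p = (\Lambda_f)_p$ at those primes, and the family $\{L_p\}$ glues to a global lattice $L$ in the genus of $f$ with $\Lambda_m(L) = L'$ identically. Then the form $g$ on $L$ satisfies $\lambda_m(g) \sim h$ on the nose, and surjectivity on classes follows.
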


Watson transformation is usual a surjection between two genera. Now we will show that Watson transformation $\lambda_4$ is a bijection for some given level and discriminants.

\begin{prop}\label{OQTC}
Watson transformation $\lambda_4$ is a bijection between $C(8N,64N^2/N_r)$(resp. $C(4N,16N^2/N_r)$) and  $C(4N,4N^2/N_r)$ (resp. $C(4N,N^2/N_r)$), and we have
\begin{equation*}
|\Aut(f)| =|\Aut(\lambda_4(f))|.
\end{equation*}
\end{prop}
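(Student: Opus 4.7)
The plan is to prove the two bijection claims and the automorphism equality in parallel, by working out $\lambda_4$ explicitly in a normal form at the prime $2$ and producing an explicit inverse. For $f\in C(8N,64N^2/N_r)$, Theorem~\ref{Leh1} forces a specific $2$-adic structure: the divisor $m_f=4d_f/N'=2^3\cdot N^2/(N_r\cdot 8N)\cdot\cdots$ has a determined $2$-part. Using the $p=2$ analogue of Lemma~\ref{Leh3}, I would first put $f$ into a normalized shape $(4a,4b,c,4r,4s,4t)$ or similar, with $c$ odd. From the definition of $\Lambda_m(f)$, the condition $f(x+z)\equiv f(z)\pmod 4$ for all $z$ amounts to $2B_f(x,z)\equiv 0\pmod 4$ together with $f(x)\equiv 0\pmod 4$; I would read these off in the normalized coordinates to obtain $\Lambda_4(f)$ explicitly, compute $\lambda_4(f)=\tfrac14 f\circ M$, and check directly that its level and discriminant are $4N$ and $4N^2/N_r$. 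The parallel computation with $f\in C(4N,16N^2/N_r)$ will yield $\lambda_4(f)\in C(4N,N^2/N_r)$.

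Next I would construct the inverse map. In the normal forms above, the scaling matrix $M$ is diagonal with $2$-power entries, so $\lambda_4$ sends the tuple $(4a,4b,c,4r,4s,4t)$ to an explicit form $g$ whose coefficients one can read off. Reading the recipe backwards gives a candidate inverse $\mu:C(4N,4N^2/N_r)\to C(8N,64N^2/N_r)$ (and analogously for the other pair). To check $\mu$ is well-defined on classes, I would apply the same normal-form analysis to a given $g$, verify that the putative pre-image has the correct level and discriminant (using Theorem~\ref{Leh1}), and show that equivalent $g$'s give equivalent pre-images because the normalization is unique up to the stabilizer that already acts on $f$. Composing $\lambda_4\circ\mu$ and $\mu\circ\lambda_4$ on the normal forms then gives the identity on the level of representatives, proving bijectivity. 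Semi-equivalence is preserved because every step is local and the maps commute with the $p$-adic action at every prime.

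For the automorphism claim, I would argue that the scaling matrix $M$ realizing $\lambda_4$ conjugates $\Aut(\lambda_4(f))$ isomorphically onto $\Aut(f)$. Concretely, if $A\in\Aut(\lambda_4(f))$, then $MAM^{-1}$ fixes $M_f$ and preserves $\Lambda_f$ because $A$ preserves $\Lambda_4(f)=M\Lambda_{\lambda_4(f)}$; conversely, any $A'\in\Aut(f)$ preserves $\Lambda_4(f)$ (this is the content of Theorem~\ref{Watsontrans} applied at the level of a single form, since the sublattice is canonically defined from $f$), so $M^{-1}A'M$ gives an automorphism of $\lambda_4(f)$. The fact that both compositions land in $GL_3(\mathbb Z)$ reduces to the same integrality checks that underlie the definitions of $\lambda_4$ and $\mu$.

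The main obstacle I anticipate is the $2$-adic bookkeeping: unlike the odd-prime situation handled by Lehman's $\phi_p$, a $2$-adic Jordan decomposition of a form of level $8N$ or $16$ in the discriminant can exhibit hyperbolic or anisotropic binary blocks, and one must verify that the normal form is unique enough to make the inverse $\mu$ unambiguous on classes. I would handle this by a case split on the possible $2$-adic types allowed by the given level/discriminant pair in Theorem~\ref{Leh1}; once the normalization is pinned down, the construction and verification of $\mu$ and the automorphism bijection become routine algebra. Bijectivity at the class level is then stronger than Watson's generic statement because the combined level+discriminant constraint rules out the collisions that occur in the general surjection $\lambda_4$ between genera.
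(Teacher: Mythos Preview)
Your plan is essentially the paper's proof: normalize $f$ at the prime $2$, compute $\lambda_4$ via an explicit diagonal scaling, and establish bijectivity by an integrality check on the conjugated change-of-basis matrix (the paper phrases this as injectivity plus Watson's surjectivity rather than constructing $\mu$, but the computation is identical). Two small points to adjust: for $f\in C(8N,64N^2/N_r)$ the correct normal form is $(8a,8b,c,8r,8s,8t)$ with $2\nmid ac$, not $(4a,4b,c,\ldots)$, and $\lambda_4$ lands on $(2a,2b,c,4r,4s,2t)$ via $M=\operatorname{diag}(1,1,2)$; the crucial parity input in the integrality check is that the discriminant equation forces $2\nmid t$, which is the concrete fact your anticipated ``$2$-adic bookkeeping'' will need. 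For the second pair $C(4N,16N^2/N_r)\to C(4N,N^2/N_r)$ the paper simply observes that $\lambda_4$ coincides with Lehman's $\phi_2$, so that case requires no new argument.
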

\begin{proof}
We will show that all positive definite ternary quadratic forms of level $8N$ and discriminants $64N^2/N_r$ are primitive. If $f$ is a positive definite ternary quadratic forms of level $8N$ and discriminants $64N^2/N_r$ but not primitive, then $f/2$ or $f/4$ is a primitive positive definite ternary quadratic forms of level $4N$ or $2N$ and discriminants $8N^2/N_r$ or $N^2/N_r$. By Theorem \ref{Leh1}, this is a contradiction. Assume that
\begin{equation*}
f(x,y,z)=8ax^2+8by^2+cz^2+8ryz+8sxz+8txy
\end{equation*}
where $2\nmid ac$ \cite[Theorem 5]{Leh92}. Let $m=4$, then it is not hard to check
\begin{equation*}
M=
\begin{pmatrix}
1 & 0& 0\\
0 & 1& 0\\
0 & 0& 2
\end{pmatrix}
\end{equation*}
and
\begin{equation*}
g(x,y,z)=2ax^2+2by^2+cz^2+4ryz+4sxz+2txy.
\end{equation*}
We have $d_g=4N^2$. Since
\begin{equation*}
8NM_g^{-1}=(2M^{-1})(8NM_f^{-1})(2M^{-1})
\end{equation*}
is even, by Theorem \ref{Leh1} we have $N_g=4N$. Hence $g\in C(4N,4N^2)$. Let
\begin{equation*}
P=\frac{1}{2}M=
\begin{pmatrix}
\frac{1}{2} & 0& 0\\
0 & \frac{1}{2}& 0\\
0 & 0& 1
\end{pmatrix}.
\end{equation*}
Now we will prove this map is a bijection. Since this mapping is a surjection, we need to show this mapping is injective. Suppose that $g,g'\in C(4N,4N^2)$ are equivalent and $g$ (resp. $g'$) is equivalent to a form $(2a,b,c,4r,4s,2t)$ (resp. $(2a',b',c',4r',4s',2t')$) with $2\nmid ac$ (resp. $2\nmid a'c'$). There is an integral matrix $U=\{u_{ij}\}_{3\times3}\in GL_{3}(\mathbb{Z})$ such that $U^{t}M_{g}U=M_{g'}$. Then we have $4a'\equiv 2cu_{31}^2\pmod4$ and $4b'\equiv 2cu_{32}^2\pmod4$. Hence $u_{31}$ and $u_{32}$ are even. Similarly $4r'\equiv 2u_{13}u_{22}t+2u_{12}u_{23}t+2u_{32}u_{33}c\pmod4$ and $4s'\equiv 2u_{13}u_{21}t+2u_{11}u_{23}t+2u_{31}u_{33}c\pmod4$. Since $d_g=4N^2=8abc+32rst-32ar^2-16bs^2-4ct^2$, we have $2\nmid t$. Then $2\mid u_{13}u_{22}+u_{12}u_{23}$ and $2\mid u_{13}u_{21}+u_{11}u_{23}$. Let $U^{-1}=\pm\{A_{ji}\}_{3\times3}$, where $\pm$ depends on $\mbox{sgn}(\det(U))$, then we have
\begin{equation*}
M_g=(U^{-1})^tM_{g'}U^{-1}
\end{equation*}
and $A_{31}=u_{12}u_{23}-u_{13}u_{22}, A_{32}=u_{11}u_{23}-u_{21}u_{13}$. Hence $A_{31}$ and $A_{32}$ are even, and 
\begin{equation*}
PU^{-1}P^{-1}=\pm
\begin{pmatrix}
A_{11} & A_{21}& \frac{1}{2}A_{31}\\
A_{12} & A_{22}& \frac{1}{2}A_{32}\\
2A_{13} & 2A_{23}& A_{33}
\end{pmatrix}
\end{equation*}
is integral. It follows that
\begin{align*}
M_{f}&=P^{-1}M_{g}P^{-1}\\
&=P^{-1}(U^{-1})^tM_{g'}U^{-1}P^{-1}\\
&=P^{-1}(U^{-1})^tPP^{-1}M_{g'}P^{-1}PU^{-1}P^{-1}\\
&=(PU^{-1}P^{-1})^tP^{-1}M_{g'}P^{-1}(PU^{-1}P^{-1})\\
&=(PU^{-1}P^{-1})^tM_{f'}(PU^{-1}P^{-1}).
\end{align*}
Hence $f\sim f'$. It follows that this map is injective. For $f\in C(4N,16N^2/N_r)$, it is not hard to check that $\lambda_4$ is the same as $\phi_2$. It is easily to prove that
\begin{equation*}
|\Aut(f)| =|\Aut(\lambda_4(f))|.
\end{equation*}
\end{proof}
\subsection{Isotropic}
Let $f$ be a ternary quadratic forms over $\mathbb{Q}$. It can be equivalent over $\mathbb{Q}$ to a diagonal form
\begin{equation*}
ax^2+by^2+cz^2.
\end{equation*}
The Hasse invariant for $f$ at $p$ is defined to be
\begin{equation*}
S_p(f)=(a,-1)_p(b,-1)_p(c,-1)_p(a,b)_p(b,c)_p(c,a)_p,
\end{equation*}
which depends only on the equivalence class of $f$, and $(a,b)_p$ is the Hilbert Norm Residue Symbol. We say $f_p$ is isotropic if $f_p(x,y,z)=0$ for some non-zero elements $x,y,z\in\mathbb{Q}_p$ , and it is anisotropic otherwise, where $f_p$ means the localization of $f$ at $p$. The modified Hasse symbol $S^*_p$ indicates whether $f_p$ is isotropic or not, that is
\begin{equation*}
S^*_p(f)=(-1)^{\delta_{p,2}}S_p(f)=
\begin{cases}
1 & \text{if $f_p$ is isotropic},\\
-1 & \text{if $f_p$ is anisotropic},
\end{cases}
\end{equation*}
where $\delta_{m,n}=1$ if $m=n$ and otherwise $0$. Recall that Hilbert’s reciprocity law says $(a,b)_\infty\prod_p(a,b)_p=1$, which futher leads to $S_\infty(f)\prod_pS_p(f)=1$. If $f$ is a primitive positive definite ternary quadratic forms, it is not hard to check that $S_\infty(f)=1$. Hence $f$ is anisotropic only at odd number of primes.

It is not hard to check the following proposition.
\begin{prop}\label{ISO}
(1)Let $p,q$ be primes, and $f\in C(N',p^hd')$ where $p\nmid d'$, then $f$ is isotropic at $q$ if and only if  $\phi_p(f)$ is isotropic at $q$.\\
(2)Let $f\in C(8N,64N^2/N_r)$ or $f\in C(4N,16N^2/N_r)$, then $f$ is isotropic at $q$ if and only if  $\lambda_4(f)$ is isotropic at $q$.
\end{prop}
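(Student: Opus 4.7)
The plan is to present both $\phi_p$ and $\lambda_4$, up to a global scalar, as the pullback of a form along an invertible $\mathbb{Q}$-linear substitution, and then invoke two elementary observations: (i) a ternary form and any nonzero scalar multiple of it share the same zero set in $\mathbb{Q}_q^{3}$; and (ii) an invertible $\mathbb{Q}_q$-linear change of variables on $\mathbb{Q}_q^{3}$ permutes nonzero isotropic vectors bijectively. Together these show that isotropy at a prime $q$ is an invariant of the $\mathbb{Q}_q$-similarity class, which is exactly what the proposition claims.

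For part~(1) I would first use Lemma~\ref{Leh3} to take the representative $f=(p^{g}a,\,p^{h-g}b,\,c,\,p^{h-g}r,\,p^{g}s,\,p^{g}t)$ with $p\nmid ac$, noting that by Theorem~\ref{Leh1} the inequalities $g\le h\le 2g$ hold, so the exponents below are non-negative. Setting $\widetilde P:=\mathrm{diag}(1,p^{2g-h},p^{g})\in M_{3}(\mathbb{Z})$, a direct block multiplication yields
\[
\widetilde P^{\,t}M_{f}\widetilde P=p^{g}M_{\phi_{p}(f)},
\]
i.e.\ $p^{g}\phi_{p}(f)(x)=f(\widetilde Px)$ as rational quadratic forms. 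Since $\det\widetilde P=p^{3g-h}\neq 0$, one has $\widetilde P\in GL_{3}(\mathbb{Q}_{q})$ for every prime $q$, and observations (i)--(ii) then give the conclusion.

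For part~(2), recall from the construction of $\lambda_4$ before Theorem~\ref{Watsontrans} that $\lambda_{4}(f)(y)=\tfrac{1}{4}f(My)$ for some non-singular integer matrix $M$. In the proof of Proposition~\ref{OQTC} the matrix $M=\mathrm{diag}(1,1,2)$ is exhibited when $f\in C(8N,64N^{2}/N_{r})$, while for $f\in C(4N,16N^{2}/N_{r})$ the same proof records the identity $\lambda_{4}=\phi_{2}$, reducing that sub-case to part~(1). In either situation the relation $4\lambda_{4}(f)(y)=f(My)$ with $M\in GL_{3}(\mathbb{Q}_{q})$, combined with (i)--(ii), yields the claim.

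The argument is essentially routine; the only point that might look like an obstacle is that the diagonal matrix $P=\mathrm{diag}(p^{-g/2},p^{(3g-2h)/2},p^{g/2})$ used in the very definition of $\phi_p$ has half-integer exponents and hence need not even lie in $\mathbb{Q}_q$. This is precisely why I would rewrite $\phi_p$ via the integral matrix $\widetilde P$ above, which trades the irrational aspect of $P$ for a harmless global scalar $p^{g}$ and makes the isotropy invariance transparent.
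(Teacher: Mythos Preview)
Your argument is correct and is essentially the paper's own proof. The paper likewise exhibits the relation $(p^{g/2}P^{-1}X)^{t}M_{\phi_p(f)}(p^{g/2}P^{-1}X)=p^{g}X^{t}M_{f}X$, which is precisely your identity $\widetilde P^{t}M_{f}\widetilde P=p^{g}M_{\phi_p(f)}$ read from the other side (note $\widetilde P=p^{g/2}P$, so your rescaling and the paper's coincide); for part~(2) the paper similarly just says ``it is similar to the above,'' matching your reduction via Proposition~\ref{OQTC}. The only cosmetic difference is that you introduce the integral matrix $\widetilde P$ up front, whereas the paper works with $P$ and absorbs the half-integer exponents by the factor $p^{g/2}$ mid-computation.
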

\begin{proof}
Let $f \in C(N',p^hd')$, $p^g\parallel N'$ and $p \nmid d'$. In terms of Lemma \ref{Leh3}, we can assume that 
\begin{center}
$f=(p^ga,p^{h-g}b,c,p^{h-g}r,p^gs,p^gt)$,
\end{center}
with $a, b, c, r, s$ and $t$ integers, $p\nmid ac$. The same process that took some $f$ and constructed automorphs or equivalences involving $\phi$ can be readily extended to the $q$-adic integers. If $f_q(x,y,z)=0$ for some non-zero elements $x,y,z\in\mathbb{Q}_q$, that is
\begin{equation*}
X^tM_fX=0, X=
\begin{pmatrix}
x\\
y\\
z
\end{pmatrix}
,
\end{equation*}
then
\begin{equation*}
p^g(P^{-1}X)^tPM_fP(P^{-1}X)=(p^{g/2}P^{-1}X)^tM_{\phi_p(f)}(p^{g/2}P^{-1}X)=0,
\end{equation*}
where
\begin{equation*}
 P=
\begin{pmatrix}
p^{-g/2} & 0& 0\\
0 & p^{(3g-2h)/2}& 0\\
0 & 0& p^{g/2}
\end{pmatrix}.
\end{equation*}
For $\phi_2$ and $\lambda_4$, it is similar to the above.
\end{proof}

\section{Quaternion algebras}\label{sec:quaternion-algebra}

In this section we will begin with the basic knowledge about quaternion algebras. For more detailed definition of quaternion algebras one can see \cite{Voi21}.

For a given filed $F$ of characteristic $0$ and elements $a,b\in F^{\times}$, we use $Q=\left(\frac{a,b}{F}\right)$ for the $F$-algebra which possesses a basis $1,i,j,k=ij=-ji$ with $i^2=a, j^2=b$. In this paper $F$ only represents $\mathbb{Q}$ or the fields $\mathbb{Q}_p$ of the $p$-adic numbers where $p$ is a prime. A quaternion algebra $Q$ is a central simple algebra. For the field of $p$-adic numbers $\mathbb{Q}_p$ or of real numbers $\mathbb{Q}_{\infty}$, there are up to isomorphism only two such equivalence classes, the algebras $M_2(\mathbb{Q}_p)$ of $2\times2$ matrices over $\mathbb{Q}_p$, or skew-fields. We use the usual $p$-adic Hilbert symbol $(a,b)_p$. The quaternion algebra $Q_p=\left(\frac{a,b}{\mathbb{Q}_p}\right)\simeq M_2(\mathbb{Q}_p)$ if and only if $(a,b)_p=1$. In the case of a quaternion algebra over $\mathbb{Q}$ the algebras  $Q=\left(\frac{a,b}{\mathbb{Q}}\right)$ and $Q'=\left(\frac{a',b'}{\mathbb{Q}}\right)$ are isomorphic if and only if $(a,b)_p=(a',b')_p$ for all $p$ and $\infty$. A quaternion algebra $Q$ over $\mathbb{Q}$ ramifies at $p$ if $\mathbb{Q}_p\otimes Q$ is a skew-field, and splitts at $p$ if $\mathbb{Q}_p\otimes Q\simeq M_2(\mathbb{Q}_p)$. If it ramifies at $\infty$ it is called definite. A quaternion algebra $Q$ ramifies only at finitely many primes and this number is always even. Hence given a set $S$ of an even number of primes, we can find $a,b$ in $\mathbb{Q}^{\times}$ such that $(a,b)_p=-1$ exactly for $p$ in $S$, and then $Q=\left(\frac{a,b}{F}\right)$ is a (up to isomorphism) quaternion algebra which ramifies exactly at the primes in $S$.
\par For any element $\alpha=t+xi+yj+zk$ of quaternion algebra $Q$, we define an involution $\overline{\alpha}=t-xi-yj-zk$, which satisfies $\overline{\bar{\alpha}}=\alpha$ and $\overline{\alpha\beta}=\bar{\beta}\bar{\alpha}$ for $\beta\in Q$. The reduced trace on $Q$ is $\tr(\alpha)=\alpha+\bar{\alpha}$, and similarly the reduced norm is $\n(\alpha)=\alpha\cdot \bar{\alpha}$. If $Q\simeq M_2(F)$, and
 \begin{equation*}
 A=
\begin{pmatrix}
 a& b\\
 c& d
\end{pmatrix}\in Q,
\end{equation*}
then
 \begin{equation*}
 \bar{A}=
\begin{pmatrix}
 d& -b\\
 -c& a
\end{pmatrix}.
\end{equation*}
Hence $\tr(A)=a+d, \n(A)=ad-bc=\det(A)$. We can see that $a\in Q$ is a root of the polynomial
\begin{equation*}
x^2-tr(a)x+n(a)=0
\end{equation*}
which is the reduced characteristic polynomial of $a$.

An ideal of quaternion algebra $Q$ is a full $\mathbb{Z}$-lattice of $Q$, that is, a finitely generated $\mathbb{Z}$-submodule of $Q$ which contains a basis of $Q$ over $\mathbb{Q}$. An order $\mathcal{O}$ in a quaternion algebra $Q$ is an ideal which is also a ring containing $\mathbb{Z}$. For the rest of this paper $\mathcal{O}$ will always denote a quaternion order. If $x\in\mathcal{O}$, then $tr(x),n(x)\in \mathbb{Z}$. An order $\mathcal{O}$  is maximal if it is not properly contained in anorher order. An Eichler order $\mathcal{O}$ is the intersection of two (not necessarily distinct) maximal orders. For a quaternion algebra over $\mathbb{Q}_p$, Eichler orders can be described more explicitly. A quaternion division algebra $Q_p$ over $\mathbb{Q}_p$ contains only one maximal order, which is $K+Kj$ with $K=\mathbb{Z}_p+i\mathbb{Z}_p$ for odd $p$ and $K=\mathbb{Z}_2+\frac{i+1}{2}\mathbb{Z}_2$ for $p=2$, where $i^2=u,j^2=p$ for some $\left(\frac{u}{p}\right)=-1$. If $Q_p$ is isomorphic to $M_2(\mathbb{Q}_p)$, there is a characterization of Eichler orders.
\begin{theorem}\label{Eo}\cite[p.372]{Voi21}
For an order $\mathcal{O}\subset Q_p\simeq M_2(\mathbb{Q}_p)$, the following are equivalent:
\par (1) $\mathcal{O}$ is Eichler;
\par (2) $\mathcal{O}\simeq 
\begin{pmatrix}
\mathbb{Z}_p & \mathbb{Z}_p\\
p^n\mathbb{Z}_p & \mathbb{Z}_p\\
\end{pmatrix};$
\par (3) $\mathcal{O}$ is the intersection of a uniquely determined pair of maximal orders (not necessarily distinct).
\end{theorem}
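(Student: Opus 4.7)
The plan is to establish (1)$\Leftrightarrow$(2) first and then (2)$\Rightarrow$(3), since (3)$\Rightarrow$(1) is immediate from the definition of an Eichler order. The technical backbone is the standard lattice dictionary for $Q_p\simeq M_2(\mathbb{Q}_p)$: the chosen isomorphism fixes an action of $Q_p$ on $V=\mathbb{Q}_p^2$, and every maximal order is then of the form $\mathrm{End}_{\mathbb{Z}_p}(\Lambda)$ for some rank-two $\mathbb{Z}_p$-lattice $\Lambda\subset V$; moreover, $\mathrm{End}(\Lambda)=\mathrm{End}(\Lambda')$ iff $\Lambda$ and $\Lambda'$ are $\mathbb{Q}_p^\times$-homothetic. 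I would first record this dictionary together with the observation that in the standard basis $e_1,e_2$ of $V$ the lattice $\mathbb{Z}_p e_1+\mathbb{Z}_p e_2$ produces the maximal order $M_2(\mathbb{Z}_p)$.

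For (2)$\Rightarrow$(1), I would take $\Lambda_1=\mathbb{Z}_p e_1+\mathbb{Z}_p e_2$ and $\Lambda_2=\mathbb{Z}_p e_1+p^{-n}\mathbb{Z}_p e_2$ and verify by a direct coordinate calculation that $\mathrm{End}(\Lambda_1)\cap\mathrm{End}(\Lambda_2)$ is precisely the order displayed in (2), which is therefore Eichler. For (1)$\Rightarrow$(2), I would write a general Eichler order as $\mathcal{O}=\mathrm{End}(\Lambda_1)\cap\mathrm{End}(\Lambda_2)$ and invoke the elementary divisor theorem over $\mathbb{Z}_p$ to find a $\mathbb{Q}_p$-basis $e_1,e_2$ together with integers $a\leq b$ satisfying $\Lambda_1=\mathbb{Z}_p e_1+\mathbb{Z}_p e_2$ and $\Lambda_2=p^a\mathbb{Z}_p e_1+p^b\mathbb{Z}_p e_2$; homothety reduces to $a=0$ and $b=-n$ with $n\geq 0$, and the previous computation finishes the argument.

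For the uniqueness assertion in (3), I would classify all maximal orders that contain the normal-form Eichler order $\mathcal{O}_n$ of (2). Via the lattice dictionary these correspond to homothety classes of lattices $\Lambda$ with $\mathcal{O}_n\Lambda\subseteq\Lambda$; a short elementary divisor argument shows such $\Lambda$ is homothetic to $\mathbb{Z}_p e_1+p^k\mathbb{Z}_p e_2$ for some $0\leq k\leq n$, yielding exactly $n+1$ such maximal orders forming a chain (a geodesic segment on the Bruhat--Tits tree). A direct matrix computation then shows that the intersection of $\mathrm{End}(\mathbb{Z}_p e_1+p^{k}\mathbb{Z}_p e_2)$ with $\mathrm{End}(\mathbb{Z}_p e_1+p^{k'}\mathbb{Z}_p e_2)$ for $k\leq k'$ equals $\mathcal{O}_{k'-k}$, and this coincides with $\mathcal{O}_n$ only for $(k,k')=(0,n)$, the two endpoints of the chain.

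The main obstacle is exactly this uniqueness in (3): while the existence of a pair of maximal orders with intersection $\mathcal{O}$ is essentially definitional, ruling out all other pairs requires knowing every maximal overorder of $\mathcal{O}$ and understanding how the pairwise intersections vary. The lattice/tree picture above is the cleanest route; a more hands-on approach via direct matrix manipulations also works but becomes combinatorially messier, so I would commit to the tree picture from the outset.
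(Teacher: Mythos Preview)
The paper does not prove this theorem; it is stated with a citation to Voight \cite[p.372]{Voi21} and used as a black box. Your proposal is correct and is essentially the standard argument given in Voight's book: the lattice--endomorphism dictionary, the elementary divisor normalization, and the Bruhat--Tits tree picture for the uniqueness in (3) are exactly the ingredients used there. Since there is no proof in the paper to compare against, there is nothing further to add beyond noting that your sketch is sound and matches the cited source.
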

The power $p^n$ is called the level of Eichler order $\mathcal{O}$. In particular, if $n=0$, then $\mathcal{O}$ is a maximal order.

The reduced discriminant of $Q$,$\disc(Q)$, is the product of primes at which $Q$ ramifies. Let $I$ be the ideal generated by all $\det(\tr(x_i\bar{x_j}))$, where $x_1,...,x_4\in \mathcal{O}$. It is easy to prove that $I$ is the square of an ideal. The reduced discriminant, $\discrd(\mathcal{O})$, of $\mathcal{O}$ is defined to be the square root of the $I$. The most important invariant of a quaternion order is the reduced discriminant. Eichler orders with squarefree level are also called Hereditary orders. Hereditary orders can be detected over global rings in terms of discriminants, as follows.
\begin{theorem}\label{Eod}\cite[p.241,p371]{Voi21}
Let $\mathcal{O}\subset Q$ be an order, and $\disc(Q)=N$.
\par (1) $\mathcal{O}$ is maximal if and only if $\discrd(\mathcal{O})=N$.
\par (2) $\mathcal{O}$ is hereditary  if and only if $\discrd(\mathcal{O})$ is squarefree, where $\discrd(\mathcal{O})=NF,(N,F)=1$ and $F$ is squarefree.
\end{theorem}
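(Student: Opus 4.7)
The plan is to reduce everything to local computations, since the reduced discriminant, being maximal, and being hereditary are all local conditions: $\mathcal O$ has the property if and only if each completion $\mathcal O_p:=\mathcal O\otimes_{\mathbb Z}\mathbb Z_p$ does, and $\discrd(\mathcal O)=\prod_p\discrd(\mathcal O_p)$ as ideals of $\mathbb Z$. So both (1) and (2) split as a prime-by-prime check, separating ramified primes $p\mid N$ from split primes $p\nmid N$.

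For (1), at a split prime every maximal order of $Q_p\simeq M_2(\mathbb Q_p)$ is conjugate to $M_2(\mathbb Z_p)$, whose reduced discriminant is trivial. At a ramified prime the unique maximal order $K+Kj$ described just before Theorem~\ref{Eo} has reduced discriminant $p\mathbb Z_p$, by a direct computation of $\det(\tr(x_i\bar x_j))$ on the explicit basis (with the minor modification at $p=2$). Multiplying over all primes gives $\discrd(\mathcal O)=N$ for any maximal $\mathcal O$. For the converse one uses the multiplicativity formula $\discrd(\mathcal O)=[\mathcal O':\mathcal O]\cdot\discrd(\mathcal O')$ whenever $\mathcal O\subseteq\mathcal O'$ is an inclusion of orders, which follows from the definition of the reduced discriminant via Gram determinants. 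Embedding $\mathcal O$ in any containing maximal order $\mathcal O'$ and comparing discriminants forces $[\mathcal O':\mathcal O]=1$, so $\mathcal O$ itself is maximal.

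For (2), I would again localize. At a ramified prime the unique maximal order is hereditary (its unique two-sided maximal ideal $j\mathcal O_p$ is principal, hence invertible), and contributes exactly one factor of $p$ to the discriminant. At a split prime, Theorem~\ref{Eo} identifies the Eichler orders of $Q_p$ as $\begin{pmatrix}\mathbb Z_p & \mathbb Z_p\\ p^n\mathbb Z_p & \mathbb Z_p\end{pmatrix}$ of level $p^n$, and a direct calculation shows their reduced discriminant is $p^n\mathbb Z_p$. Such an order is hereditary exactly when $n\leq 1$: for $n\geq 2$ the Jacobson radical fails to be an invertible two-sided ideal. Hence $\mathcal O$ is hereditary iff $v_p(\discrd(\mathcal O))\leq 1$ at every prime, iff $\discrd(\mathcal O)$ is squarefree; the decomposition $\discrd(\mathcal O)=NF$ with $(N,F)=1$ then records which ramified primes contribute their mandatory factor and which split primes appear at Eichler level $1$.

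The main obstacle I expect is giving clean arguments for the two background facts that both parts rest on: the multiplicativity of the reduced discriminant under inclusion (needed for the converse in (1) and for translating squarefree discriminant into ``Eichler level $\leq 1$ at each split prime'' in (2)), and the equivalence ``hereditary iff $n\leq 1$'' at a split prime, which is most cleanly obtained via the general criterion that an order is hereditary iff its Jacobson radical is invertible as a two-sided ideal, rather than by ad hoc manipulation of matrix ideals.
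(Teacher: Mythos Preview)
The paper does not prove this theorem; it is quoted verbatim from Voight's book with a citation and no argument. Your local--global reduction, the explicit computation of $\discrd(\mathcal O_p)$ at split and ramified primes, and the index formula $\discrd(\mathcal O)=[\mathcal O':\mathcal O]\,\discrd(\mathcal O')$ are exactly the ingredients Voight uses, so your proposal is correct and matches the standard proof in the cited reference.

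One point worth tightening in part~(2): your argument at a split prime runs through the classification of \emph{Eichler} orders and their discriminants, but the statement is about arbitrary orders. To close the loop for the implication ``$\discrd(\mathcal O)$ squarefree $\Rightarrow$ hereditary'' you need that any $\mathbb Z_p$-order in $M_2(\mathbb Q_p)$ with $v_p(\discrd)=1$ is already Eichler of level~$p$. This follows either from the radical-idealizer criterion you mention (hereditary $\iff$ Jacobson radical invertible), or more elementarily by observing that an index-$p$ subring of $M_2(\mathbb Z_p)$ reduces to a $3$-dimensional unital subalgebra of $M_2(\mathbb F_p)$, and the only such subalgebras are Borel (upper-triangular up to conjugation). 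Similarly, at a ramified prime you should note that the maximal order is the \emph{only} hereditary order in the local division algebra, not merely one example. These are the two details you flagged as obstacles, and they are indeed where the work lies; once supplied, the sketch is complete.
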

We say $\mathcal{O},\mathcal{O}'$ are of the same type if there exists $\alpha\in Q^{\times}$ such that $\mathcal{O}'=\alpha^{-1}\mathcal{O}\alpha$. Two orders $\mathcal{O},\mathcal{O}'$ are of the same type if and only if they are isomorphic as $\mathbb{Z}$-algebras. Two orders $\mathcal{O},\mathcal{O}'$ are locally of the same type or locally isomorphic if $\mathcal{O}_p,\mathcal{O}_p'$ are of the same type for all primes $p$. The genus of $\mathcal{O}$ is the set of orders in $Q$ locally isomorphic to $\mathcal{O}$. The type set of $\mathcal{O}$ is the set of isomorphism classes of orders in genus of $\mathcal{O}$. The number of the type set of  $\mathcal{O}$ is usually called the type number of $\mathcal{O}$. For all Eichler orders with  same squarefree level in a definite quaternion algebra over the field of rational numbers, Li, Skoruppa and Zhou proved that a weighted sum of Jacobi theta series associated to these orders is a Jacobi Eisenstein series. For any pair of relatively prime positive squarefree integers $(N_1, N_2)$ and any negative discriminant $-D$, Hurwitz class numbers $H(D)$ can be modified as follows:
\begin{equation}\label{Hurwitz-class-number}
H^{(N_1,N_2)}(D)=H(D/f^2_{N_1,N_2})\prod\limits_{p\mid N_1}\left( 1-\left(\frac{-D/f^2_{N_1,N_2}}{p}\right)\right)\prod\limits_{p\mid N_2}\frac{2pf_p-p-1-\left(\frac{-D/f^2_{N_1,N_2}}{p}\right)\left(2f_p-p-1\right)}{p-1},
\end{equation}
where $f_{N_1,N_2}$ is the largest positive integer containing only prime factors of $N_1N_2$ whose square divides $D$ such that $-D/f^2_{N_1,N_2}$ is still a negative discriminant. The products run through all primes $p$ dividing $N_1$ and $N_2$, respectively. We use $f_p$ for the exact $p$-power dividing $f_{N_1,N_2}$. In particular, when $f_p=1$, the above fraction containing $f_p$ becomes $1+\left(\frac{-D/f^2_{N_1,N_2}}{p}\right)$. All $(\frac{\cdot}{p})$ and $(\frac{\cdot}{l})$ are the Kronecker symbols, and for integer $m$, Kronecker symbol
\begin{equation*}
\left(\frac{m}{2}\right)=
\begin{cases}
1 & \text{if}\quad m\equiv\pm1 \pmod8,\\
-1 & \text{if}\quad m\equiv\pm3 \pmod8,\\
0 & \text{otherwise}.
\end{cases}
\end{equation*}
Set
\begin{equation*}
H^{(N_1,N_2)}(0)=-\frac{1}{12}\prod\limits_{p\mid N_1}(1-p)\prod\limits_{p\mid N_2}(1+p),
\end{equation*}
and $H^{(N_1,N_2)}(D)=0$ for every positive integer $D\equiv1,2\pmod4$. 
\begin{theorem}\label{LSZ}\cite{LSZ22}
Let $N$ and $F$ be two squarefree positive integers which are coprime, where N has an odd number of prime factors. Use $T_{N,F}$ for the type number of Eichler orders of level $F$ in $Q_N$, where $Q_N$ ramifies only at the primes which divides $N$. Choose a complete set of representatives $\mathcal{O}_{\mu}(\mu=1,2,...,T_{N,F})$  for these types of Eichler orders. 
Let
\begin{equation*}
\theta_{\mathcal{O_\mu}}=\sum_{\substack{n,r\in\mathbb{Z}\\4n-r^2\geq0}}\rho_{\mathcal{O_\mu}}(n,r)q^n\zeta^r
\end{equation*}
where $\rho_{\mathcal{O_\mu}}(n,r)$ is the number of zeros of $x^2-rx+n$ in $\mathcal{O}_{\mu}$. Then we have
\begin{equation}
\sum\limits_{\mu=1}^{T_{N,F}}\frac{\theta_{\mathcal{O_\mu}}}{\textnormal{card}(\Aut(\mathcal{O}_\mu))}=2^{-e(NF)}\sum_{\substack{n,r\in\mathbb{Z}\\4n-r^2\geq0}}H^{(N,F)}(4n-r^2)q^n\zeta^r,
\end{equation}
where $e(NF)$ is the number of prime factors of $NF$, and $\textnormal{card}(\Aut(\mathcal{O}_{\mu}))$ is the number of elements in the group of automorphisms of $\mathcal{O}_{\mu}$. 
\end{theorem}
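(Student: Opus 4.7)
The plan is to prove Theorem~\ref{LSZ} in three stages: establish that each $\theta_{\mathcal{O}_\mu}$ is a Jacobi form, identify the weighted sum as a Jacobi Eisenstein series, and then match Fourier coefficients by a local analysis.

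First, I will verify that each $\theta_{\mathcal{O}_\mu}$ is a holomorphic Jacobi form of weight $2$ and index $1$ on $\Gamma_0(NF)$. For $\alpha\in\mathcal{O}_\mu$ the identity $\alpha^2-\tr(\alpha)\alpha+\n(\alpha)=0$ forces $4\n(\alpha)-\tr(\alpha)^2\ge 0$, which matches the support condition of a Jacobi index-$1$ form. Splitting $\alpha$ into its trace and trace-zero parts presents $\theta_{\mathcal{O}_\mu}$ as a lattice theta series attached to the rank-four positive definite quadratic lattice $(\mathcal{O}_\mu,\n)$ with elliptic variable $z$ tracking $\tr$, giving weight $4/2 = 2$. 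The transformation law under $\Gamma_0(NF)$ then follows from a standard Poisson-summation argument, keyed to $\discrd(\mathcal{O}_\mu) = NF$ provided by Theorem~\ref{Eod}; this is where the Eichler-order hypothesis enters.

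Second, writing $\Theta$ for the weighted sum on the left-hand side of the theorem, I will show $\Theta$ lies in the Jacobi Eisenstein subspace $J_{2,1}^{\mathrm{Eis}}(\Gamma_0(NF))$. The natural route is a Jacobi analogue of the classical orthogonal Siegel--Weil formula: averaging theta series over a full genus of orders, weighted by $1/|\Aut|$, produces a genus-theta whose cuspidal projection vanishes. Concretely, the $\theta_{\mathcal{O}_\mu}$ are organised as the columns of a Brandt matrix adapted to Jacobi forms; Hecke-equivariance of the Brandt action then forces the weighted row sum to coincide with a distinguished Jacobi Eisenstein series. Since an Eisenstein series of the stated weight, index and level is determined by finitely many Fourier coefficients, the theorem reduces to an explicit coefficient match.

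Third, the coefficient of $q^n\zeta^r$ in $\Theta$ equals $\sum_\mu \rho_{\mathcal{O}_\mu}(n,r)/|\Aut(\mathcal{O}_\mu)|$, and by the standard mass-formula decomposition this splits as a product of local densities, one per prime. At $p\nmid NF$ the order is $M_2(\mathbb{Z}_p)$ and the count reproduces the Hurwitz factor $H(4n-r^2)$. At $p\mid N$ the unique maximal order in the local division algebra yields the ramified factor $1-\left(\tfrac{-D/f^2}{p}\right)$ appearing in~\eqref{Hurwitz-class-number}. At $p\mid F$ one invokes Theorem~\ref{Eo}(2) to realise $\mathcal{O}_{\mu,p}$ explicitly and counts the zeros of $x^2-rx+n$ there; the overall prefactor $2^{-e(NF)}$ reflects the two-to-one correspondence between isomorphism classes of Eichler orders and ordered pairs of maximal orders at each ramified or level prime. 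The main obstacle is this last ramified local computation at $p\mid F$: the invariant $f_p$ in~\eqref{Hurwitz-class-number} encodes a delicate $p$-adic case analysis of how roots of $x^2-rx+n$ distribute between the two maximal overorders of $\mathcal{O}_{\mu,p}$, and the precise rational expression $\bigl(2pf_p - p - 1 - \left(\tfrac{-D/f^2}{p}\right)(2f_p - p - 1)\bigr)/(p-1)$ only crystallises after summing geometric series indexed by the possible $p$-adic valuations.
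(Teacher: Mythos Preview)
The paper does not prove Theorem~\ref{LSZ}; it is quoted verbatim from \cite{LSZ22} and used as a black box. So there is no ``paper's own proof'' to compare against---your proposal is an attempt to reconstruct the argument of Li--Skoruppa--Zhou, not of the present authors.

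As for the proposal itself: the three-stage strategy (Jacobi-form structure, identification with an Eisenstein series, local coefficient matching) is the correct architecture and is essentially how \cite{LSZ22} proceeds. However, your explanation of the prefactor $2^{-e(NF)}$ is wrong. You attribute it to a ``two-to-one correspondence between isomorphism classes of Eichler orders and ordered pairs of maximal orders at each ramified or level prime,'' but at a prime $p\mid N$ the local division algebra has a \emph{unique} maximal order, so no such two-to-one correspondence exists there. The factor $2^{-e(NF)}$ actually arises from the Atkin--Lehner involutions: at each prime $p\mid NF$ there is an involution on the set of (left ideal classes of) Eichler orders, and the type set is the quotient by the group generated by these involutions. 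The passage from the class-number-weighted sum (which is what the Siegel--Weil/mass-formula machinery naturally produces) to the type-number-weighted sum in the theorem introduces exactly this power of $2$. Without this correction, your third stage would not balance.

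A second, smaller gap: in stage two you assert that ``Hecke-equivariance of the Brandt action then forces the weighted row sum to coincide with a distinguished Jacobi Eisenstein series.'' This step needs more: Hecke-equivariance alone does not kill the cuspidal projection. What one actually uses is that the vector $(1/|\Aut(\mathcal{O}_\mu)|)_\mu$ is a common eigenvector of all Brandt matrices with eigenvalue equal to the degree, which pins it to the Eisenstein line; equivalently, one invokes the trace formula or the explicit Siegel--Weil identity for definite quaternion algebras.
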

Our motivation is the conection between Eichler orders and ternary quadratic forms and applying the above Theorem.

\section{Maps between Eichler orders and ternary quadratic forms}
In this section  we will discuss three maps between Eichler orders and ternary quadratic forms.
\subsection{Two trace zero lattices attached to Eichler orders}
Now we will discuss the norm forms of the Eichler orders $\mathcal{O}$, and its restriction to ternary sublattices $\mathcal{O}^0$ and $S^0$.
\begin{prop}\label{41}
Let 
\begin{center}
$\mathcal{O}=\mathbb{Z}+\mathbb{Z}\alpha_1+\mathbb{Z}\alpha_2+\mathbb{Z}\alpha_3\subset Q$
\end{center}
be an Eichler order, then there exist $\tr(\alpha_1')=\tr(\alpha_2')=0$, and $\tr(\alpha_3')=1$, such that
\begin{center}
$\mathcal{O}=\mathbb{Z}+\mathbb{Z}\alpha_1'+\mathbb{Z}\alpha_2'+\mathbb{Z}\alpha_3'$.
\end{center}
\end{prop}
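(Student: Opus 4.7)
The plan is to reduce the problem to finding a single element of $\mathcal{O}$ of reduced trace $1$, and then adjust the other generators to kill their traces. Write $t_i=\tr(\alpha_i)$ for $i=1,2,3$. Since every element of $\mathcal{O}$ is an integer combination of $1,\alpha_1,\alpha_2,\alpha_3$ and since $\tr(1)=2$, the image of the trace map on $\mathcal{O}$ is the ideal $\gcd(2,t_1,t_2,t_3)\mathbb{Z}\subset\mathbb{Z}$. Therefore the proposition comes down to showing that $\gcd(2,t_1,t_2,t_3)=1$, i.e.\ that at least one of $t_1,t_2,t_3$ is odd.

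The first (and key) step is to prove that $\tr:\mathcal{O}\to\mathbb{Z}$ is surjective. I would do this locally: for each prime $p$ I would verify that $\tr(\mathcal{O}_p)=\mathbb{Z}_p$, whence $\tr(\mathcal{O})$ is an ideal of $\mathbb{Z}$ whose localization at every prime is $\mathbb{Z}_p$ and so must equal $\mathbb{Z}$. At a split prime $p$, Theorem~\ref{Eo} realizes $\mathcal{O}_p$ as $\bigl(\begin{smallmatrix}\mathbb{Z}_p&\mathbb{Z}_p\\p^n\mathbb{Z}_p&\mathbb{Z}_p\end{smallmatrix}\bigr)$, which contains the idempotent of trace $1$. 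At an odd ramified prime the unique maximal order has basis $1,i,j,ij$ with $i^2\in\mathbb{Z}_p^\times$ and $j^2=p$, so $\tr(\mathcal{O}_p)=2\mathbb{Z}_p=\mathbb{Z}_p$. At $p=2$ ramified, the unique maximal order in the local division algebra contains the image of the ring of integers of the unramified quadratic extension of $\mathbb{Q}_2$, a generator of which has unit reduced trace in $\mathbb{Z}_2$.

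After relabeling so that $t_3$ is odd, I set
\begin{equation*}
\alpha_3':=\alpha_3-\frac{t_3-1}{2}\in\mathcal{O},\qquad \alpha_i':=\alpha_i-t_i\alpha_3'\in\mathcal{O}\ \ (i=1,2),
\end{equation*}
so that $\tr(\alpha_3')=1$ and $\tr(\alpha_1')=\tr(\alpha_2')=0$. The change of basis matrix from $\{1,\alpha_1,\alpha_2,\alpha_3\}$ to $\{1,\alpha_1',\alpha_2',\alpha_3'\}$ has top row $(1,0,0,0)$, and the $3\times 3$ minor obtained by deleting that row and the first column is upper triangular with $1$'s on the diagonal; hence the matrix is unimodular and $\{1,\alpha_1',\alpha_2',\alpha_3'\}$ is a $\mathbb{Z}$-basis of $\mathcal{O}$. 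The hard part is the local trace surjectivity, particularly at $p=2$ when $Q$ ramifies there; once that structural input is in hand the remainder is an elementary B\'ezout-type adjustment with a one-line determinant check.
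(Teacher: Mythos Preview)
Your proof is correct and follows essentially the same route as the paper. Both arguments reduce the proposition to showing that not all of $t_1,t_2,t_3$ can be even, and both establish this by inspecting the local order $\mathcal{O}_2$ (split or ramified) and exhibiting an element of odd trace there. Your packaging via ``$\tr:\mathcal{O}\to\mathbb{Z}$ is surjective, checked prime by prime'' is a slightly cleaner framing than the paper's explicit case analysis on $(t_1,t_2,t_3)\in\{0,1\}^3$ followed by a contradiction at $p=2$, but the mathematical content, in particular the $2$-adic step, is identical.
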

\begin{proof}
Let $\tr(\alpha_1)=a, \tr(\alpha_2)=b, \tr(\alpha_3)=c$, where $a,b$ and $c$ are integers. If $a\equiv0\pmod2$, let $\alpha_1'=\alpha_1-a/2$, then
\begin{center}
$\mathcal{O}=\mathbb{Z}+\mathbb{Z}\alpha_1'+\mathbb{Z}\alpha_2+\mathbb{Z}\alpha_3$
\end{center}
and $\tr(\alpha_1')=0$. If $a\equiv0\pmod1$, let $\alpha_1'=\alpha_1-(a-1)/2$, then
\begin{center}
$\mathcal{O}=\mathbb{Z}+\mathbb{Z}\alpha_1'+\mathbb{Z}\alpha_2+\mathbb{Z}\alpha_3$
\end{center}
and $\tr(\alpha_1')=1$. Hence we can assume that $a=0$ or $a=1$. For $\alpha_2$ and $\alpha_3$, it is similar to the above.\\ 
If $a=b=c=1$, let $\alpha_1'=\alpha_1-\alpha_3$ and $\alpha_2'=\alpha_2-\alpha_3$, then
\begin{center}
$\mathcal{O}=\mathbb{Z}+\mathbb{Z}\alpha_1'+\mathbb{Z}\alpha_2'+\mathbb{Z}\alpha_3'$,
\end{center}
$\tr(\alpha_1')=\tr(\alpha_2')=0$ and $\tr(\alpha_3')=1$.\\
If $a+b+c=2$, without loss of generality we assume that $a=0, b=c=1$. Let $\alpha_2'=\alpha_2-\alpha_3$, then
\begin{center}
$\mathcal{O}=\mathbb{Z}+\mathbb{Z}\alpha_1+\mathbb{Z}\alpha_2'+\mathbb{Z}\alpha_3'$,
\end{center}
$\tr(\alpha_1)=\tr(\alpha_2')=0$ and $\tr(\alpha_3')=1$.\\
If $a+b+c=1$, without loss of generality we assume that $a=b=0, c=1$, and it is well-done.\\
If $a=b=c=0$, we will prove it is a contradiction. If $Q$ ramifies at $2$, then
\begin{equation*}
\mathcal{O}_2\cong\mathbb{Z}_2+\mathbb{Z}_2\frac{1+i}{2}+\mathbb{Z}_2j+\mathbb{Z}_2\frac{1+i}{2}j,
\end{equation*}
where $i^2=-3,j^2=2$. We have 
\begin{equation*}
\mathbb{Z}_2+\mathbb{Z}_2\frac{1+i}{2}+\mathbb{Z}_2j+\mathbb{Z}_2\frac{1+i}{2}j=\mathbb{Z}_2+\mathbb{Z}_2\alpha_1'+\mathbb{Z}_2\alpha_2'+\mathbb{Z}_2\alpha_3',
\end{equation*}
where $\tr(\alpha_1')=\tr(\alpha_2')=0$, and $\tr(\alpha_3')=1$. Since $\tr(\alpha_i+\alpha_j)=\tr(\alpha_i)+\tr(\alpha_j)$, we have
\begin{equation*}
\begin{pmatrix}
1 \\
\frac{1+i}{2}\\
j \\
\frac{1+i}{2}j
\end{pmatrix}
=
\begin{pmatrix}
1 & 0 & 0 & 0 \\
x & & & \\
0 & &U & \\
0 & & &
\end{pmatrix}
\begin{pmatrix}
1 \\
\alpha_1' \\
\alpha_2' \\
\alpha_3'
\end{pmatrix},
\end{equation*}
where all the entries in the matrix are integers and $\det(U)\in\mathbb{Z}_p^{\times}$. It follows that
\begin{equation*}
1=\tr(\frac{1+i}{2})=x\tr(1)=2x.
\end{equation*}
Hence $x=1/2\notin\mathbb{Z}_2$. This is a contradiction. If $Q$ splitts at $2$, then 
\begin{equation*}
\mathcal{O}_{2}\cong 
\begin{pmatrix}
\mathbb{Z}_{2} & \mathbb{Z}_{2}\\
2^n\mathbb{Z}_{2} & \mathbb{Z}_{2}\\
\end{pmatrix}.
\end{equation*}
The rest of proof is similar to the above.
\end{proof}
Unless otherwise stated, we assume that for an Eichler order $\mathcal{O}=\mathbb{Z}+\mathbb{Z}\alpha_1+\mathbb{Z}\alpha_2+\mathbb{Z}\alpha_3$  and $\tr(\alpha_1)=\tr(\alpha_2)=0,\tr(\alpha_3)=1$.
For an Eichler order $\mathcal{O}=\mathbb{Z}+\mathbb{Z}\alpha_1+\mathbb{Z}\alpha_2+\mathbb{Z}\alpha_3\in Q$, lattice $\mathcal{O}^{0}=\mathcal{O}\cap Q^0=\mathbb{Z}\alpha_1+\mathbb{Z}\alpha_2+\mathbb{Z}(2\alpha_3-1)$, where $Q^0=\{\alpha\in Q:\tr(\alpha)=0\}$, is an even integral positive definite lattice, when equipped with the bilinear form $(x,y)\mapsto \tr(x\overline{y})$. We have the following ternary quadratic form
\begin{align*}
f_{\mathcal{O}^{0}} & =\n(x\alpha_1+y\alpha_2+z(2\alpha_3-1))\\
& =\n(\alpha_1)x^2+\n(\alpha_2)y^2+(4\n(\alpha_3)-1)z^2+2\tr(\alpha_2\overline{\alpha_3})yz+2\tr(\alpha_1\overline{\alpha_3})xz+\tr(\alpha_1\overline{\alpha_2})xy,
\end{align*}
and the Gram matrix of $f_{\mathcal{O}^{0}}$ is
\begin{equation*}
M_{f_{\mathcal{O}^{0}}}=
\begin{pmatrix}
\tr(\alpha_1\overline{\alpha_1}) &\tr(\alpha_1\overline{\alpha_2}) & 2\tr(\alpha_1\overline{\alpha_3})\\
\tr(\alpha_2\overline{\alpha_1}) &\tr(\alpha_2\overline{\alpha_2}) & 2\tr(\alpha_2\overline{\alpha_3})\\
2\tr(\alpha_3\overline{\alpha_1}) &2\tr(\alpha_3\overline{\alpha_2}) & 4\tr(\alpha_3\overline{\alpha_3})-2
\end{pmatrix}.
\end{equation*}

For an Eichler order $\mathcal{O}=\mathbb{Z}+\mathbb{Z}\alpha_1+\mathbb{Z}\alpha_2+\mathbb{Z}\alpha_3\in Q$, define the lattice
\begin{equation*}
S=\mathbb{Z}+2\mathcal{O}=\mathbb{Z}+\mathbb{Z}2\alpha_1+\mathbb{Z}2\alpha_2+\mathbb{Z}2\alpha_3,
\end{equation*}
then lattice $S^{0}=(\mathbb{Z}+2\mathcal{O})\cap Q^0=\mathbb{Z}2\alpha_1+\mathbb{Z}2\alpha_2+\mathbb{Z}(2\alpha_3-1)$ is an even integral positive definite lattice, when equipped with the bilinear form $(x,y)\mapsto \tr(x\overline{y})$. We have the following ternary quadratic form
\begin{align*}
f_{S^{0}} & =\n(x(2\alpha_1)+y(2\alpha_2)+z(2\alpha_3-1))\\
& =4\n(\alpha_1)x^2+4\n(\alpha_2)y^2+(4\n(\alpha_3)-1)z^2+4\tr(\alpha_2\overline{\alpha_3})yz+4\tr(\alpha_1\overline{\alpha_3})xz+4\tr(\alpha_1\overline{\alpha_2})xy,
\end{align*}
and the Gram matrix of $f_{S^{0}}$ is
\begin{equation*}
M_{f_{S^{0}}}=
\begin{pmatrix}
4\tr(\alpha_1\overline{\alpha_1}) &4\tr(\alpha_1\overline{\alpha_2}) & 4\tr(\alpha_1\overline{\alpha_3})\\
4\tr(\alpha_2\overline{\alpha_1}) &4\tr(\alpha_2\overline{\alpha_2}) & 4\tr(\alpha_2\overline{\alpha_3})\\
4\tr(\alpha_3\overline{\alpha_1}) &4\tr(\alpha_3\overline{\alpha_2}) & 4\tr(\alpha_3\overline{\alpha_3})-2        
\end{pmatrix}.
\end{equation*}

\begin{prop}\label{OQE}
Let $\mathcal{O},\mathcal{O}'\subset Q$ be orders. Suppose $\mathcal{O},\mathcal{O}'$ are isomorphic (resp. locally isomorphic), then $f_{\mathcal{O}^{0}},f_{\mathcal{O}'^{0}}$ are equivalent (resp. semi-equivalent).
\end{prop}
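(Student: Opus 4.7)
The plan is to use conjugation-invariance of the reduced trace $\tr$ and the reduced norm $\n$ on $Q$: for any $\gamma\in Q^\times$ and $x\in Q$ one has $\tr(\gamma^{-1}x\gamma)=\tr(x)$ (since $\tr$ is a cyclic trace) and $\n(\gamma^{-1}x\gamma)=\n(x)$ (since $\n$ is multiplicative and takes values in the centre). If $\mathcal O'=\gamma^{-1}\mathcal O\gamma$, then the inner automorphism $c_\gamma(x):=\gamma^{-1}x\gamma$ carries $Q^0$ to itself, so it restricts to a $\mathbb Z$-linear bijection $\mathcal O^0\to\mathcal O'^0$ preserving the symmetric bilinear form $(x,y)\mapsto\tr(x\bar{y})$. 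In other words, $c_\gamma$ is already an isometry of the two trace-zero lattices; the remaining work is to translate this into an equality of Gram matrices up to $GL_3(\mathbb Z)$.

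To do so I would fix a basis $\{1,\alpha_1,\alpha_2,\alpha_3\}$ of $\mathcal O$ normalized by Proposition~\ref{41}, so that $f_{\mathcal O^0}$ is computed in the basis $\{\alpha_1,\alpha_2,2\alpha_3-1\}$ of $\mathcal O^0$. Setting $\beta_i:=c_\gamma(\alpha_i)$, conjugation preserves the traces, so $\{1,\beta_1,\beta_2,\beta_3\}$ is again a basis of $\mathcal O'$ of the type supplied by Proposition~\ref{41}, with associated trace-zero basis $\{\beta_1,\beta_2,2\beta_3-1\}$. Computing the ternary form of $\mathcal O'^0$ in this basis reproduces $f_{\mathcal O^0}$ on the nose. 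However, $f_{\mathcal O'^0}$ as given in the text is computed in the basis obtained by applying Proposition~\ref{41} directly to $\mathcal O'$; two $\mathbb Z$-bases of the rank-$3$ lattice $\mathcal O'^0$ differ by a matrix $M\in GL_3(\mathbb Z)$, and this $M$ is precisely the matrix witnessing $M_{f_{\mathcal O^0}}=M^{t}M_{f_{\mathcal O'^0}}M$, i.e.\ equivalence in the sense of Section~\ref{ssec:Lemman's-bijection}.

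For the second assertion I would run the same argument $p$-adically. Local isomorphism furnishes, for every prime $p$, some $\gamma_p\in Q_p^\times$ with $\mathcal O'_p=\gamma_p^{-1}\mathcal O_p\gamma_p$; conjugation by $\gamma_p$ is again trace- and norm-preserving, and after comparing with the chosen $\mathbb Z$-bases (which localize to $\mathbb Z_p$-bases of $\mathcal O_p^0$ and $\mathcal O'^0_p$) one obtains $M_p\in GL_3(\mathbb Z_p)$ with $M_{f_{\mathcal O^0}}=M_p^{t}M_{f_{\mathcal O'^0}}M_p$. Since $f_{\mathcal O^0}$ and $f_{\mathcal O'^0}$ are both primitive positive definite ternary forms, they are tautologically equivalent over $\mathbb R$. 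Combining the archimedean and all $p$-adic equivalences yields semi-equivalence.

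The step that will require the most bookkeeping, and which I expect to be the only genuine obstacle, is the compatibility with the basis normalization of Proposition~\ref{41}. Conjugation does not return the specific basis chosen for $\mathcal O'$; it produces only \emph{some} admissible basis, so the desired equivalence of quadratic forms only emerges after a further integral change of basis of the trace-zero lattice. Once this is carefully articulated, the content of the proof reduces to the conjugation-invariance of $\tr$ and $\n$ and to the standard fact that any two $\mathbb Z$-bases (resp.\ $\mathbb Z_p$-bases) of a rank-$3$ lattice are related by an element of $GL_3(\mathbb Z)$ (resp.\ $GL_3(\mathbb Z_p)$).
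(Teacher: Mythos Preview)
Your proposal is correct and follows essentially the same approach as the paper: both use that an isomorphism of orders is realized by conjugation $x\mapsto\gamma^{-1}x\gamma$, that conjugation preserves $\tr$ and $\n$ and hence restricts to a $\mathbb Z$-linear isometry $\mathcal O^0\to\mathcal O'^0$, and that two $\mathbb Z$-bases of the trace-zero lattice differ by an element of $GL_3(\mathbb Z)$. The paper carries this out with explicit $4\times4$ block matrices (extracting the $3\times3$ block $U$ acting on the trace-zero part) and writes the Gram-matrix identity $M_f=UM_{f'}U^t$ by hand, whereas you phrase it more conceptually; but the content is identical, including the localized version for semi-equivalence.
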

\begin{proof}
Let $f$ (resp. $f'$) denote $f_{\mathcal{O}^{0}}$ (resp. $f_{\mathcal{O}'^{0}}$). It is similar to Proposition \ref{41} that for an order $\mathcal{O}$, there exist $\tr(\alpha_1')=\tr(\alpha_2')=\tr(\alpha_3')=0$ such that
\begin{center}
$\mathcal{O}=\mathbb{Z}\alpha_0+\mathbb{Z}\alpha_1+\mathbb{Z}\alpha_2+\mathbb{Z}\alpha_3$.
\end{center}
For $a=b=0, c=1$, we have
\begin{equation*}
\mathcal{O}=\mathbb{Z}+\mathbb{Z}\alpha_1+\mathbb{Z}\alpha_2+\mathbb{Z}\alpha_3=\mathbb{Z}\alpha_3+\mathbb{Z}\alpha_1+\mathbb{Z}\alpha_2+\mathbb{Z}(2\alpha_3-1)
\end{equation*}
and $\tr(\alpha_1')=\tr(\alpha_2')=\tr(2\alpha_3'-1)=0$. Let 
\begin{equation*}
\mathcal{O}=\mathbb{Z}\alpha_0+\mathbb{Z}\alpha_1+\mathbb{Z}\alpha_2+\mathbb{Z}\alpha_3,
\end{equation*}
\begin{equation*}
\mathcal{O}'=\mathbb{Z}\alpha_0'+\mathbb{Z}\alpha_1'+\mathbb{Z}\alpha_2'+\mathbb{Z}\alpha_3'.
\end{equation*}
Since $\mathcal{O}\simeq \mathcal{O}'$ if and only if there exists $\beta\in Q^{\times}$such that $\mathcal{O}'=\beta^{-1}\mathcal{O}\beta$, and $\tr(\beta^{-1}\alpha_i\beta)=\tr(\alpha_i),\tr(\alpha_i+\alpha_j)=\tr(\alpha_i)+\tr(\alpha_j)$, we have
\begin{equation*}
\begin{pmatrix}
\beta^{-1}\alpha_0\beta \\
\beta^{-1}\alpha_1\beta \\
\beta^{-1}\alpha_2\beta  \\
\beta^{-1}\alpha_3\beta
\end{pmatrix}
=
\begin{pmatrix}
\delta & * & * & * \\
0 & & & \\
0 & &U & \\
0 & & &
\end{pmatrix}
\begin{pmatrix}
\alpha_0' \\
\alpha_1' \\
\alpha_2' \\
\alpha_3'
\end{pmatrix},
\end{equation*}
where all the entries in the matrix are integers and $\delta\det(U)=\pm1$. Hence $U\in GL_3(\mathbb{Z})$. Since $\tr(\beta^{-1}\alpha_i\beta\overline{\beta^{-1}\alpha_j\beta})=\tr(\alpha_i\overline{\alpha_j})$, we have
\begin{equation*}
M_f=\begin{pmatrix}
\tr(\alpha_1\overline{\alpha_1}) &\tr(\alpha_1\overline{\alpha_2}) & \tr(\alpha_1\overline{\alpha_3})\\
\tr(\alpha_2\overline{\alpha_1}) &\tr(\alpha_2\overline{\alpha_2}) & \tr(\alpha_2\overline{\alpha_3})\\
\tr(\alpha_3\overline{\alpha_1}) &\tr(\alpha_3\overline{\alpha_2}) & \tr(\alpha_3\overline{\alpha_3})
\end{pmatrix}=
\begin{pmatrix}
\tr(\beta^{-1}\alpha_1\beta\overline{\beta^{-1}\alpha_1\beta}) &\tr(\beta^{-1}\alpha_1\beta\overline{\beta^{-1}\alpha_2\beta}) & \tr(\beta^{-1}\alpha_1\beta\overline{\beta^{-1}\alpha_3\beta})\\
\tr(\beta^{-1}\alpha_2\beta\overline{\beta^{-1}\alpha_1\beta}) &\tr(\beta^{-1}\alpha_2\beta\overline{\beta^{-1}\alpha_2\beta}) & \tr(\beta^{-1}\alpha_2\beta\overline{\beta^{-1}\alpha_3\beta})\\
\tr(\beta^{-1}\alpha_3\beta\overline{\beta^{-1}\alpha_1\beta}) &\tr(\beta^{-1}\alpha_3\beta\overline{\beta^{-1}\alpha_2\beta}) & \tr(\beta^{-1}\alpha_3\beta\overline{\beta^{-1}\alpha_3\beta})
\end{pmatrix}.
\end{equation*}
Let
\begin{equation*}
H=
\begin{pmatrix}
\alpha_1 \\
\alpha_2 \\
\alpha_3
\end{pmatrix}
\begin{pmatrix}
\overline{\alpha_1} & \overline{\alpha_2} & \overline{\alpha_3}
\end{pmatrix},
H'=
\begin{pmatrix}
\alpha_1' \\
\alpha_2' \\
\alpha_3'
\end{pmatrix}
\begin{pmatrix}
\overline{\alpha_1'} & \overline{\alpha_2'} & \overline{\alpha_3'}
\end{pmatrix}.
\end{equation*}
We have
\begin{equation*}
M_f=\begin{pmatrix}
\tr(\alpha_1\overline{\alpha_1}) &\tr(\alpha_1\overline{\alpha_2}) & \tr(\alpha_1\overline{\alpha_3})\\
\tr(\alpha_2\overline{\alpha_1}) &\tr(\alpha_2\overline{\alpha_2}) & \tr(\alpha_2\overline{\alpha_3})\\
\tr(\alpha_3\overline{\alpha_1}) &\tr(\alpha_3\overline{\alpha_2}) & \tr(\alpha_3\overline{\alpha_3})
\end{pmatrix}
=H+H^t, M_{f'}=H'+H'^{t}.
\end{equation*}
Since
\begin{equation*}
\begin{pmatrix}
\beta^{-1}\alpha_1\beta \\
\beta^{-1}\alpha_2\beta  \\
\beta^{-1}\alpha_3\beta
\end{pmatrix}
=U
\begin{pmatrix}
\alpha_1' \\
\alpha_2' \\
\alpha_3'
\end{pmatrix},
\begin{pmatrix}
\overline{\beta^{-1}\alpha_1\beta} &\overline{\beta^{-1}\alpha_2\beta} &\overline{\beta^{-1}\alpha_3\beta}
\end{pmatrix}
=
\begin{pmatrix}
\overline{\alpha_1'} & \overline{\alpha_2'} & \overline{\alpha_3'}
\end{pmatrix}U^t,
\end{equation*}
it follows that
\begin{equation*}
M_f=UH'U^t+(UH'U^t)^t=U(H'+H'^t)U^t=UM_{f'}U^t.
\end{equation*}
The rest of proof is similar to the above.
\end{proof}

For a quaternion algebra over $\mathbb{Q}_p$, Eichler orders can be described more explicitly. For an Eichler order $\mathcal{O}$, by Proposition \ref{OQE},  we can give the following characterization of the genus which $f_{\mathcal{O}^{0}}$ belongs to.
\begin{prop}\label{OQD} 
Let $N$ be a product of s distinct odd primes, and for an order $\mathcal{O}$ we denote $f_{\mathcal{O}^{0}}$ by $f$. 
\par (1)Let $\mathcal{O}\subset Q_{\Nodd}$ be an Eichler order of level $N/\Nodd$, then $d_f=N^2, N_f=4N$ and $f$ is anisotropic only in the $p$-adic field for $p\mid\Nodd$. The genus which $f$ belongs to is denoted by $G_{4N,N^2,\Nodd}$.
\par (2)Let $\mathcal{O}\subset Q_{\Nodd}$ be an Eichler order of level $2N/\Nodd$, then $d_f=4N^2, N_f=4N$ and $f$ is anisotropic only in the $p$-adic field for $p\mid\Nodd$. The genus which $f$ belongs to is denoted by $G_{4N,4N^2,\Nodd}$.
\par (3)Let $\mathcal{O}\subset Q_{2\Neven}$ be an Eichler order of level $N/\Neven$, then $d_f=4N^2, N_f=4N$ and $f$ is anisotropic only in the $p$-adic field for $p\mid2\Neven$. The genus which $f$ belongs to is denoted by $G_{4N,4N^2,2\Neven}$.
\end{prop}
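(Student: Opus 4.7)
The plan is to extract each invariant prime-by-prime from a standard local model of $\mathcal{O}_p$ and then assemble the global picture. The first step is to prove once and for all the identity $d_f=\discrd(\mathcal{O})^2$ for an order normalised as in Prop.~\ref{41}. Writing $(v_0,v_1,v_2,v_3)=(1,\alpha_1,\alpha_2,\alpha_3)$, the determinant of the $4\times 4$ Gram matrix $\bigl(\tr(v_i\bar v_j)\bigr)$ is $\discrd(\mathcal{O})^2$ by definition. Replacing $\alpha_3$ by $\alpha_3-\tfrac12$ block-diagonalises this matrix into a $(2)$-block and a $3\times 3$ block $M'$, giving $\discrd(\mathcal{O})^2=2\det M'$; rescaling the new third vector by $2$ to obtain the integral basis $(\alpha_1,\alpha_2,2\alpha_3-1)$ of $\mathcal{O}^0$ converts $M'$ to $M_f=\operatorname{diag}(1,1,2)\,M'\,\operatorname{diag}(1,1,2)$, so $d_f=\tfrac12\det M_f=\discrd(\mathcal{O})^2$. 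Combined with Theorem~\ref{Eod}, this produces $d_f=N^2$ in case~(1) and $d_f=4N^2$ in cases~(2) and~(3).

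For the anisotropic places, I would use that $f_p=f_{\mathcal{O}^0}\otimes\mathbb{Q}_p$ is isotropic iff $Q_p^0$ contains a nonzero element of reduced norm zero, and such an element exists exactly when $Q_p\simeq M_2(\mathbb{Q}_p)$ (witnessed by $\bigl(\begin{smallmatrix}0&1\\0&0\end{smallmatrix}\bigr)$). Hence $f$ is anisotropic precisely at the finite primes where $Q$ ramifies, yielding the set $\{p\mid\Nodd\}$ in~(1), (2) and $\{p\mid 2\Neven\}$ in~(3). The positive-definiteness globally gives triviality at the archimedean place, and Hilbert reciprocity then forces this set of finite ramified primes to have odd cardinality, which is consistent with the hypothesised sets.

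It remains to verify $N_f=4N$, which I would establish via the equivalent formula $N_f=4d_f/m_f$. The divisor $m_f$ can be computed locally by fixing a standard basis for $\mathcal{O}_p$ at each prime, using Theorem~\ref{Eo} and the description of the maximal order in a ramified quaternion algebra recalled in Section~\ref{sec:quaternion-algebra}: at odd primes not dividing $\discrd(\mathcal{O})$ the local model is $M_2(\mathbb{Z}_p)$, at odd primes contributing to the Eichler level it is the Eichler block $\bigl(\begin{smallmatrix}\mathbb{Z}_p&\mathbb{Z}_p\\p\mathbb{Z}_p&\mathbb{Z}_p\end{smallmatrix}\bigr)$, and at odd ramified primes it is the unique maximal order $K+Kj$. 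A direct calculation of $M_f$ in each local model pins down $v_p(m_f)$, and assembling yields $m_f=N$ in case~(1) and $m_f=4N$ in cases~(2) and~(3), whence $N_f=4N$ throughout. Combined with the first two steps, the form $f_{\mathcal{O}^0}$ lies in the named genus. I expect the main obstacle to be the calculation at $p=2$, since in cases~(2) and~(3) the prime $2$ plays a non-generic role — either contributing to the Eichler level or ramifying $Q$ — and in the latter case one must use the exceptional basis element $\tfrac{1+i}{2}$ featured in Prop.~\ref{41}; carefully tracking $2$-adic valuations through the rescaling $2\alpha_3-1$ is where the bookkeeping gets delicate, although once done the remaining cases are mechanical.
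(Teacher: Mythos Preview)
Your proposal is correct, and for the level computation it coincides with the paper's approach (local models at each prime, extracting $v_p(m_f)$, then $N_f=4d_f/m_f$). For the other two invariants, however, you take a genuinely different route from the paper.

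The paper proves everything uniformly by local computation: at each relevant prime it writes down an explicit model of $\mathcal{O}_p$, computes the local form $f_p$, reads off $v_p(d_{f_p})$ and $v_p(m_{f_p})$, and checks isotropy via the modified Hasse symbol $S_p^*$. Your argument for $d_f$ is instead a single global identity $d_f=\discrd(\mathcal{O})^2$, obtained by block-diagonalising the $4\times 4$ Gram matrix of $(1,\alpha_1,\alpha_2,\alpha_3)$; this is cleaner and bypasses the need to assemble local discriminants. Likewise, your anisotropy argument --- $f_p$ is isotropic iff $Q_p^0$ contains a nonzero norm-zero element iff $Q_p$ splits --- is more conceptual than the paper's Hasse-symbol calculation and yields the ramification set directly. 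What the paper's uniform local approach buys is that all three invariants fall out of the same explicit computation, so the $2$-adic bookkeeping you flag as delicate (especially in case~(3), where the paper chains through $3x^2-2y^2-2z^2-2yz\underset{2}{\sim}\cdots\underset{2}{\sim}x^2+y^2+z^2$) is handled once rather than isolated as the residual hard step.
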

\begin{proof}
Let $\mathcal{O}\subset Q_{\Nodd}$ be an Eichler order of level $N/\Nodd$. If $p\mid \Nodd$, then
\begin{equation*}
\mathcal{O}_p\cong\mathbb{Z}_p+i\mathbb{Z}_p+j\mathbb{Z}_p+ij\mathbb{Z}_p,
\end{equation*}
where $i^2=u,j^2=p$, and $(\frac{u}{p})=-1$. We have
\begin{equation*}
f\underset{p}{\sim}-ux^2-py^2+upz^2=f_p,
\end{equation*}
where $f\underset{p}{\sim}g$ means that $f$ and $g$ are equivlent over the $p$-adic integers $\mathbb{Z}_p$. It follows that $d_{f_p}=4u^2p^2,m_{f_p}=4up$. Hence $v_p(d_{f_p})=2,v_p(m_{f_p})=1$ and $S^*_p(f_p)=-1$. We have $f_p$ is anisotropic.

If $p\mid N/\Nodd$, then
\begin{equation*}
\mathcal{O}_{p}\cong 
\begin{pmatrix}
\mathbb{Z}_{p} & \mathbb{Z}_{p}\\
p\mathbb{Z}_{p} & \mathbb{Z}_{p}\\
\end{pmatrix}
\end{equation*}
and
\begin{equation*}
\mathcal{O}^0_{p}\cong
 \mathbb{Z}_{p}
\begin{pmatrix}
1 & 0\\
0 & -1\\
\end{pmatrix}
+ \mathbb{Z}_{p}
\begin{pmatrix}
0 & 1\\
0 & 0\\
\end{pmatrix}
+ \mathbb{Z}_{p}
\begin{pmatrix}
0 & 0\\
p & 0\\
\end{pmatrix},
\end{equation*}
It follows that
\begin{equation*}
f\underset{p}{\sim}-x^2-pyz=f_{p}.
\end{equation*}
We have $d_{f_{p}}=p^2,m_{f_{p}}=p$, then $v_{p}(d_{f_{p}})=2,v_{p}(m_{f_{p}})=1$ and $v_2(d_{f_2})=2,v_2(m_{f_2})=1$ and $f_p$ is isotropic.

If $p\nmid N$, then 
\begin{equation*}
\mathcal{O}_{p}\cong 
\begin{pmatrix}
\mathbb{Z}_{p} & \mathbb{Z}_{p}\\
\mathbb{Z}_{p} & \mathbb{Z}_{p}\\
\end{pmatrix}
\end{equation*}
and
\begin{equation*}
\mathcal{O}^0_{p}\cong
 \mathbb{Z}_{p}
\begin{pmatrix}
1 & 0\\
0 & -1\\
\end{pmatrix}
+ \mathbb{Z}_{p}
\begin{pmatrix}
0 & 1\\
0 & 0\\
\end{pmatrix}
+ \mathbb{Z}_{p}
\begin{pmatrix}
0 & 0\\
1 & 0\\
\end{pmatrix}.
\end{equation*}
It follows that
\begin{equation*}
f\underset{p}{\sim}-x^2-yz=f_{p}.
\end{equation*}
We have $d_{f_p}=1,m_{f_p}=1$, then $v_p(d_{f_p})=0,v_p(m_{f_p})=0$ and $f_p$ is isotropic. Since $v_p(d_{f_p}),v_p(m_{f_p})$ are $GL_3(\mathbb{Z}_p)$-invariants of ternary quadratic forms \cite[p.4]{LM03}, we have
\begin{equation*}
d_f=N^2,m_f=N,N_f=4N.
\end{equation*}
The genus which $f$ belongs to is denoted by $G_{4N,N^2,\Nodd}$.

Let $\mathcal{O}\subset Q_{\Nodd}$ be an Eichler order of level $2N/\Nodd$. We have
\begin{equation*}
f\underset{2}{\sim}-x^2-2yz=f_{2}.
\end{equation*}
It follows that $d_{f_{2}}=4,m_{f_{2}}=4$, and $v_2(d_{f_2})=2,v_2(m_{f_2})=2$. We can see that $f_p$ is isotropic and
\begin{equation*}
d_f=4N^2,m_f=4N,N_f=4N.
\end{equation*}
The genus which $f$ belongs to is denoted by $G_{4N,4N^2,\Nodd}$.

Let $\mathcal{O}\subset Q_{2\Neven}$ be an Eichler order of level $N/\Neven$. We have
\begin{equation*}
\mathcal{O}_2\cong\mathbb{Z}_2+\frac{1+i}{2}\mathbb{Z}_2+j\mathbb{Z}_2+\frac{1+i}{2}j\mathbb{Z}_2,
\end{equation*}
where $i^2=-3,j^2=2$, and
\begin{equation*}
f\underset{2}{\sim}3x^2-2y^2-2z^2-2yz\underset{2}{\sim}x^2-3y^2-3z^2\underset{2}{\sim}x^2+5y^2+5z^2\underset{2}{\sim}x^2+y^2+z^2.
\end{equation*}
It follows that $d_{f_2}=4,m_{f_2}=4$, and $v_2(d_{f_2})=2,v_2(m_{f_2})=2$. We can see $f_2$ is anisotropic. In such a case, we have
\begin{equation*}
d_f=4N^2,m_f=4N,N_f=4N.
\end{equation*}
The genus which $f$ belongs to is denoted by $G_{4N,4N^2,2\Neven}$.
\end{proof}
In a positive definite quaternion algebra $Q_N$, we choose a complete set of representatives $\{\mathcal{O}_{\mu}\}_{\mu=1,2...T_{N,F}}$ for these types of Eichler orders of level $F$. Then we have the map $M_0$ as follows:
\begin{align*}
M_0:\{\mathcal{O}_{\mu}\}_{\mu=1,2...T_{\Nodd,N/\Nodd}} & \to  G_{4N,N^2,\Nodd}\\
\mathcal{O}_{\mu}\subset Q_{\Nodd} & \mapsto f_{\mathcal{O}_\mu^0} \in G_{4N,N^2,\Nodd};
\end{align*}
\begin{align*}
M_0:\{\mathcal{O}_{\mu}\}_{\mu=1,2...T_{\Nodd,2N/\Nodd}} & \to  G_{4N,4N^2,\Nodd}\\
\mathcal{O}_{\mu}\subset Q_{\Nodd} & \mapsto f_{\mathcal{O}_\mu^0} \in G_{4N,4N^2,\Nodd};
\end{align*}
\begin{align*}
M_0:\{\mathcal{O}_{\mu}\}_{\mu=1,2...T_{2\Neven,N/\Neven}} & \to  G_{4N,4N^2,2\Neven}\\
\mathcal{O}_{\mu}\subset Q_{2\Neven} & \mapsto f_{\mathcal{O}_\mu^0} \in G_{4N,4N^2,2\Neven}.
\end{align*}
By Propoition \ref{OQE} and Propoition \ref{OQD}, $M_0$ is well-defined.

The proof of following proposition is similar to that of Proposition \ref{OQD}.
\begin{prop}\label{SQD} 
Let $N$ be a product of s distinct odd primes, and for an order $\mathcal{O}$ we denote $f_{S^{0}}$ by $f$. 
\par (1)Let $\mathcal{O}\subset Q_{\Nodd}$ be an Eichler order of level $N/\Nodd$, then $d_f=16N^2$, $N_f=4N$ and $f$ is anisotropic only in the $p$-adic field for $p\mid\Nodd$. The genus which $f$ belongs to is denoted by $G_{4N,16N^2,\Nodd}$.
\par (2)Let $\mathcal{O}\subset Q_{\Nodd}$ be an Eichler order of level $2N/\Nodd$, then $d_f=64N^2$, $N_f=8N$ and $f$ is anisotropic only in the $p$-adic field for $p\mid\Nodd$. The genus which $f$ belongs to is denoted by $G_{8N,64N^2,\Nodd}$.
\par (3)Let $\mathcal{O}\subset Q_{2\Neven}$ be an Eichler order of level $N/\Neven$, then $d_f=64N^2$, $N_f=8N$ and $f$ is anisotropic only in the $p$-adic field for $p\mid2\Neven$. The genus which $f$ belongs to is denoted by $G_{8N,64N^2,2\Neven}$.
\end{prop}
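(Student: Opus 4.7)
The plan is to mimic the proof of Proposition~\ref{OQD} by exploiting the transparent relation between $\mathcal{O}^0$ and $S^0$. The key observation is that $S^0 = 2\mathbb{Z}\alpha_1 + 2\mathbb{Z}\alpha_2 + \mathbb{Z}(2\alpha_3-1)$ is obtained from $\mathcal{O}^0 = \mathbb{Z}\alpha_1 + \mathbb{Z}\alpha_2 + \mathbb{Z}(2\alpha_3-1)$ by scaling the first two basis vectors by $2$, so that with $D = \mathrm{diag}(2,2,1)$ the Gram matrices are related by $M_{f_{S^0}} = D \cdot M_{f_{\mathcal{O}^0}} \cdot D$. This single identity carries most of the weight of the proof.

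From this relation the discriminant claim is immediate by taking determinants: $d_{f_{S^0}} = \det(D)^2 \, d_{f_{\mathcal{O}^0}} = 16\, d_{f_{\mathcal{O}^0}}$, so Proposition~\ref{OQD} directly yields $16N^2$ in case (1) and $64N^2$ in cases (2) and (3). The anisotropy statement is equally easy: since $D$ is invertible over $\mathbb{Q}_p$ for every prime $p$, the local forms $f_{S^0}$ and $f_{\mathcal{O}^0}$ are $\mathbb{Q}_p$-equivalent, and isotropy is a $\mathbb{Q}_p$-invariant, so the set of primes of anisotropy is preserved and we inherit the answer from Proposition~\ref{OQD}.

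The level is the one place that requires real work. I would determine it prime by prime from the invariants $v_p(d_f)$ and $v_p(m_f)$. At every odd prime $p$ the matrix $D$ lies in $GL_3(\mathbb{Z}_p)$, so $f_{S^0}$ is $\mathbb{Z}_p$-equivalent to $f_{\mathcal{O}^0}$ and the local contribution to the level at $p$ is identical to the one computed in Proposition~\ref{OQD}. This pins down the $p$-part of the level for every odd prime and reduces the problem entirely to the behaviour at $p=2$, which is the main obstacle.

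At $p=2$ the plan is to use the explicit local descriptions of $\mathcal{O}_2$ recalled in the proof of Proposition~\ref{OQD}, write down a concrete Gram matrix for $f_{\mathcal{O}^0}$ in a trace-zero basis, and then conjugate by $D$ to read off the Gram matrix of $f_{S^0}$ at $2$. In cases (1) and (2), $\mathcal{O}_2 \subset M_2(\mathbb{Q}_2)$ is Eichler of level $1$ or $2$; after a suitable choice of trace-zero generators, $f_{\mathcal{O}^0}$ is $\mathbb{Z}_2$-equivalent to $-z^2 - xy$ respectively $-z^2 - 2xy$, and scaling by $D$ yields $f_{S^0} \underset{2}{\sim} -z^2 - 4xy$ and $-z^2 - 8xy$, from which $v_2(4d/m) = 2$ and $3$. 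In case (3), $\mathcal{O}_2$ is the maximal order of the division algebra over $\mathbb{Q}_2$; a direct trace computation in the basis used in Proposition~\ref{OQD} gives a Gram matrix for $f_{\mathcal{O}^0}$ at $2$ (equivalent to $x^2+y^2+z^2$ as there), and after multiplying by $D$ one checks by the explicit divisor formulas $M_{ij}$ that $v_2(m_{f_{S^0}}) = 4$ while $v_2(d_{f_{S^0}}) = 6$, giving local level $v_2(4d/m) = 3$. Combining these local data with the odd-prime contributions yields global level $4N$ in case (1) and $8N$ in cases (2) and (3), completing the proof.
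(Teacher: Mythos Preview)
Your approach is correct and is essentially what the paper intends: its proof is the single sentence ``similar to that of Proposition~\ref{OQD}''. Your use of the identity $M_{f_{S^0}} = D\,M_{f_{\mathcal{O}^0}}\,D$ with $D=\mathrm{diag}(2,2,1)$ (which the paper itself records in the proof of Proposition~\ref{CD1}) is a clean way to dispose of the discriminant, the anisotropy, and all odd-prime contributions to the level at once, reducing everything to a direct local computation at $p=2$ exactly as in Proposition~\ref{OQD}.

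Two small points. First, at $p=2$ the phrase ``conjugate by $D$'' is a little loose: the identity $M_{f_{S^0}} = D\,M_{f_{\mathcal{O}^0}}\,D$ holds in the fixed global basis, not after an arbitrary $\mathbb{Z}_2$-change of basis, so the honest route is to compute $S^0_2 = (\mathbb{Z}_2 + 2\mathcal{O}_2)\cap Q_2^0$ directly from the explicit local $\mathcal{O}_2$ (which is what you effectively do). Second, in case~(3) your numbers are off: with the basis $i,\,2j,\,j+ij$ of $S^0_2$ one finds $f_{S^0}\underset{2}{\sim}3x^2-8y^2-8z^2-8yz$, for which $v_2(d)=6$ and $v_2(m)=5$ (not $4$), so $v_2(4d/m)=2+6-5=3$ as required; with your stated $v_2(m)=4$ the arithmetic would give $4$, not $3$.
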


Recallling Proposition \ref{OQTC}, Watson transformation $\lambda_4$ is a bijection between $C(8N,64N^2/N_r)$(resp. $C(4N,16N^2/N_r)$) and  $C(4N,4N^2/N_r)$(resp. $C(4N,N^2/N_r)$). We have the following proposition.

\begin{prop}\label{CD1}
Let $\mathcal{O}$ be an Eichler order, then $\lambda_4(f_{S^{0}})=f_{\mathcal{O}^{0}}$.
\end{prop}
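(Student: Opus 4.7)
The plan is to apply Watson's transformation $\lambda_4$ directly, using the explicit Gram matrices of $f_{S^0}$ and $f_{\mathcal{O}^0}$ displayed earlier in this subsection. Setting $D=\mathrm{diag}(2,2,1)$, a glance at those two matrices already gives the identity $M_{f_{S^0}}=D\,M_{f_{\mathcal{O}^0}}\,D$, so the entire task reduces to showing that the correct base-change matrix for $\lambda_4$ is $\mathrm{diag}(1,1,2)$, since then $\frac{1}{4}\mathrm{diag}(1,1,2)\cdot D M_{f_{\mathcal{O}^0}} D\cdot \mathrm{diag}(1,1,2)=M_{f_{\mathcal{O}^0}}$.

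The first step is to compute the Watson sublattice $\Lambda_4(f_{S^0})$. Every entry of $M_{f_{S^0}}$ is divisible by $4$ except the bottom-right entry, which equals $4\tr(\alpha_3\overline{\alpha_3})-2\equiv -2\pmod 4$; note that each $\tr(\alpha_i\overline{\alpha_j})$ lies in $\mathbb{Z}$ because $\alpha_i\in\mathcal{O}$. Writing out the defining condition of $\Lambda_4(f_{S^0})$, one checks that the only mod-$4$ obstructions come from the third coordinate: the third entry of $M_{f_{S^0}}(x,y,z)^t$ is $\equiv -2z\pmod 4$, and $f_{S^0}(x,y,z)\equiv -z^2\pmod 4$. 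Both vanish modulo $4$ exactly when $z$ is even, so
\begin{equation*}
\Lambda_4(f_{S^0})=\mathbb{Z}\oplus\mathbb{Z}\oplus 2\mathbb{Z}=M\mathbb{Z}^3,\qquad M=\mathrm{diag}(1,1,2).
\end{equation*}

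The second step is to feed this $M$ into the definition of $\lambda_4$: the Gram matrix of $\lambda_4(f_{S^0})$ is $\frac{1}{4}M^tM_{f_{S^0}}M$. Conjugating $M_{f_{S^0}}$ by $\mathrm{diag}(1,1,2)$ doubles only its third row and third column, and dividing the result by $4$ converts the $(i,j)$-block with $i,j\in\{1,2\}$ into $\tr(\alpha_i\overline{\alpha_j})$, the $(i,3)$ and $(3,i)$ entries into $2\tr(\alpha_i\overline{\alpha_3})$, and the $(3,3)$ entry into $4\tr(\alpha_3\overline{\alpha_3})-2$, producing $M_{f_{\mathcal{O}^0}}$ exactly.

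The only genuinely delicate point is the explicit identification of $\Lambda_4(f_{S^0})$; everything else is a one-line matrix calculation. Once one observes that the sole source of mod-$4$ irregularity in $M_{f_{S^0}}$ is the normalization $\tr(\alpha_3)=1$ (which is precisely why $2\alpha_3-1$, and not $2\alpha_3$, appears in the basis of both $\mathcal{O}^0$ and $S^0$), the equality $\lambda_4(f_{S^0})=f_{\mathcal{O}^0}$ follows immediately.
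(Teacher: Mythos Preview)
Your proof is correct and follows essentially the same route as the paper's own argument: both identify $M_{f_{S^0}}=\mathrm{diag}(2,2,1)\,M_{f_{\mathcal{O}^0}}\,\mathrm{diag}(2,2,1)$, observe that $f_{S^0}(x,y,z)\equiv -z^2\pmod 4$, deduce that the Watson base-change matrix is $M=\mathrm{diag}(1,1,2)$, and conclude $\tfrac14 M^t M_{f_{S^0}} M = M_{f_{\mathcal{O}^0}}$. Your write-up is in fact slightly more explicit than the paper's, since you spell out both the linear condition $M_{f_{S^0}}\boldsymbol{x}\equiv 0\pmod 4$ and the quadratic condition $f_{S^0}(\boldsymbol{x})\equiv 0\pmod 4$ in identifying $\Lambda_4(f_{S^0})$, whereas the paper simply asserts the form of $M$.
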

\begin{proof}
Let
\begin{align*}
f_{S^{0}} & =\n(x(2\alpha_1)+y(2\alpha_2)+z(2\alpha_3-1))\\
& =4\n(\alpha_1)x^2+4\n(\alpha_2)y^2+(4\n(\alpha_3)-1)z^2+4\tr(\alpha_2\overline{\alpha_3})yz+4\tr(\alpha_1\overline{\alpha_3})xz+4\tr(\alpha_1\overline{\alpha_2})xy,
\end{align*}
and
\begin{align*}
f_{\mathcal{O}^{0}} & =\n(x\alpha_1+y\alpha_2+z(2\alpha_3-1))\\
& =\n(\alpha_1)x^2+\n(\alpha_2)y^2+(4\n(\alpha_3)-1)z^2+2\tr(\alpha_2\overline{\alpha_3})yz+2\tr(\alpha_1\overline{\alpha_3})xz+\tr(\alpha_1\overline{\alpha_2})xy.
\end{align*}
We have
$$M_{f_{S^{0}}}=
\begin{pmatrix}
2 & 0& 0\\
0 & 2& 0\\
0 & 0& 1
\end{pmatrix}
M_{f_{\mathcal{O}^{0}}}
\begin{pmatrix}
2 & 0& 0\\
0 & 2& 0\\
0 & 0& 1
\end{pmatrix}.$$
Let $m=4$, it follows that $f_{S^{0}}(x,y,z)\equiv -z^2\pmod4$. It is not hard to check
\begin{equation*}
M=
\begin{pmatrix}
1 & 0& 0\\
0 & 1& 0\\
0 & 0& 2
\end{pmatrix}
\end{equation*}
and
\begin{align*}
\lambda_4(f_{S^{0}}(x,y,z)) & =\n(\alpha_1)x^2+\n(\alpha_2)y^2+(4\n(\alpha_3)-1)z^2+2\tr(\alpha_2\overline{\alpha_3})yz+2\tr(\alpha_1\overline{\alpha_3})xz+\tr(\alpha_1\overline{\alpha_2})xy\\
& = f_{\mathcal{O}^{0}}(x,y,z).
\end{align*}.
\end{proof}

\begin{cor}
Let $\mathcal{O},\mathcal{O}'\subset Q$ be orders. Suppose $\mathcal{O},\mathcal{O}'$ are isomorphic (resp. locally isomorphic), then $f_{S^{0}},f_{S'^{0}}$ are equivalent (resp. semi-equivalent).
\end{cor}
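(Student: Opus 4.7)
The plan is to combine Proposition~\ref{OQE} with the identification $\lambda_4(f_{S^0}) = f_{\mathcal{O}^0}$ from Proposition~\ref{CD1}. Suppose first that $\mathcal{O}$ and $\mathcal{O}'$ are globally isomorphic as $\mathbb{Z}$-algebras. Proposition~\ref{OQE} yields $f_{\mathcal{O}^0} \sim f_{\mathcal{O}'^0}$, and by Proposition~\ref{CD1} this amounts to $\lambda_4(f_{S^0}) \sim \lambda_4(f_{S'^0})$. Proposition~\ref{OQTC} asserts that $\lambda_4$ is a bijection between the appropriate class sets, and the injectivity half of its proof explicitly extracts an integral equivalence between the preimages from an equivalence between the images; hence $f_{S^0} \sim f_{S'^0}$.

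For the semi-equivalence half, the cleanest route is a direct argument mirroring the proof of Proposition~\ref{OQE}. If $\mathcal{O}'_p = \beta_p^{-1}\mathcal{O}_p\beta_p$ for some $\beta_p \in Q_p^{\times}$ at each prime $p$, then
$S'_p = \mathbb{Z}_p + 2\mathcal{O}'_p = \beta_p^{-1}(\mathbb{Z}_p + 2\mathcal{O}_p)\beta_p = \beta_p^{-1} S_p \beta_p$,
since conjugation fixes $\mathbb{Z}_p$. Applying the matrix manipulation from the proof of Proposition~\ref{OQE} to the basis $\{2\alpha_1,\,2\alpha_2,\,2\alpha_3 - 1\}$ of $S^0$ (rather than $\{\alpha_1,\,\alpha_2,\,2\alpha_3 - 1\}$ of $\mathcal{O}^0$) then produces an element of $GL_3(\mathbb{Z}_p)$ intertwining $M_{f_{S^0}}$ and $M_{f_{S'^0}}$, establishing semi-equivalence.

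The main point requiring care is the $\mathbb{Z}_p$-integrality of the change-of-basis matrix. Because the scaling by $2$ is applied uniformly to the first two basis vectors and commutes with conjugation, it drops out of the Gram-matrix relation $M_{f_{S^0}} = U\,M_{f_{S'^0}}\,U^t$, and the integrality argument of Proposition~\ref{OQE} transfers verbatim. I expect this to be the only bookkeeping obstacle, and it is routine.
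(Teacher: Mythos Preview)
Your proposal is correct and matches the paper's intended argument. The paper states the corollary without proof immediately after Proposition~\ref{CD1}, implicitly relying on the combination of Proposition~\ref{OQE}, Proposition~\ref{CD1}, and the bijectivity of $\lambda_4$ from Proposition~\ref{OQTC}; your global argument carries this out explicitly, and your direct local argument (observing $S'_p=\beta_p^{-1}S_p\beta_p$ and running the Proposition~\ref{OQE} computation for the basis of $S^0$) is a clean variant that is equally valid---indeed, it could have been used uniformly for both halves.
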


In a positive definite quaternion algebra $Q_N$, we choose a complete set of representatives $\{\mathcal{O}_{\mu}\}_{\mu=1,2...T_{N,F}}$ for these types of Eichler orders of level $F$. Then we have the map $M_1$ as follows:
\begin{align*}
M_1:\{\mathcal{O}_{\mu}\}_{\mu=1,2...T_{\Nodd,N/\Nodd}} & \to  G_{4N,16N^2,\Nodd}\\
\mathcal{O}_{\mu}\subset Q_{\Nodd} & \mapsto f_{S_\mu^0} \in G_{4N,16N^2,\Nodd};
\end{align*}
\begin{align*}
M_1:\{\mathcal{O}_{\mu}\}_{\mu=1,2...T_{\Nodd,2N/\Nodd}} & \to  G_{8N,64N^2,\Nodd}\\
\mathcal{O}_{\mu}\subset Q_{\Nodd} & \mapsto f_{S_\mu^0} \in G_{8N,64N^2,\Nodd};
\end{align*}
\begin{align*}
M_1:\{\mathcal{O}_{\mu}\}_{\mu=1,2...T_{2\Neven,N/\Neven}} & \to  G_{8N,64N^2,2\Neven}\\
\mathcal{O}_{\mu}\subset Q_{2\Neven} & \mapsto f_{S_\mu^0} \in G_{8N,64N^2,2\Neven}.
\end{align*}
It follows that $M_1$ is  well-defined. We will show $M_0$ and $M_1$ are bijections in Section 6.
\subsection{Even Clifford algebras}
If $f$ is a non-degenerate ternary quadratic form integral over $\mathbb{Z}$, we define $C_0(f)$ to be the even Clifford algebras over $\mathbb{Z}$ associated to $f$. Then $C_0(f)$ is an order in a quaternion algebra over $\mathbb{Q}$. For more detailed definition
of Clifford algebra one can see \cite{Lem11} and \cite{LM03}. Conversely, assume that $
\mathcal{O}=\mathbb{Z}+\mathbb{Z}e_1+\mathbb{Z}e_2+\mathbb{Z}e_3$ is an order in a quaternion algebra $Q$. Then $\Lambda =\mathcal{O}^{\sharp}\cap Q^0$ is a 3-dimensional $\mathbb{Z}$-lattice on $Q^0$. If
$\mathcal{O}^{\sharp}= \langle f_0, f_1, f_2, f_3\rangle$ where $f_i$ is the dual basis of $e_i$, then $\Lambda=\langle f_1, f_2, f_3\rangle$. Define a ternary quadratic form $f_{\mathcal{O}}$ associated to $\mathcal{O}$ by
\begin{equation*}
f_{\mathcal{O}}=\discrd(\mathcal{O})\cdot n(xf_1+yf_2+zf_3).
\end{equation*}
\begin{theorem}\label{Lem}\cite[Theorem 4.3]{Lem11}
Let $R$ be a principal ideal domain. The maps $f\mapsto C_0(f)$ and $\mathcal{O}\mapsto f_{\mathcal{O}}$ are inverses to each other and the discriminants satisfy
\textnormal{discrd}$(\mathcal{O})=d(f_{\mathcal{O}})$. Furthermore, the maps give a bijection between similarity classes of non-degenerate ternary quadratic forms integral over $R$ and isomorphism classes of quaternion $R$-orders.
\end{theorem}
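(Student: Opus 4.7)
The plan is to verify all three assertions—inverse property, discriminant identity, and similarity/isomorphism correspondence—by passing to concrete bases and exploiting the PID hypothesis to guarantee that every relevant lattice is free of the expected rank.

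First I would unpack both constructions in coordinates. Given an order $\mathcal{O}$, apply the trace-normalization of Proposition~\ref{41} (valid over any PID) to write $\mathcal{O}=R+R\alpha_1+R\alpha_2+R\alpha_3$ with $\alpha_i\in Q^{0}$ after replacing $\alpha_3$ by $2\alpha_3-1$. With respect to the trace pairing $\langle x,y\rangle=\tr(x\bar y)$, compute the dual basis $f_0,f_1,f_2,f_3$ of $\mathcal{O}^{\sharp}$ explicitly as the inverse Gram matrix applied to $1,\alpha_1,\alpha_2,\alpha_3$; since $1$ spans the orthogonal complement of $Q^{0}$, the lattice $\Lambda=\mathcal{O}^{\sharp}\cap Q^{0}$ is the rank-$3$ free module $Rf_1+Rf_2+Rf_3$, and the scaling by $\discrd(\mathcal{O})$ in the definition of $f_{\mathcal{O}}$ clears denominators to produce an integral form. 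In the other direction, for a non-degenerate ternary $f$ with Gram matrix $M$, write $C_0(f)$ with the standard basis $1,\epsilon_1=e_2e_3,\epsilon_2=e_3e_1,\epsilon_3=e_1e_2$, and read off its multiplication table directly from the entries of $M$.

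Next I would verify the two compositions by direct matching of generators. For $\mathcal{O}\mapsto f_{\mathcal{O}}\mapsto C_0(f_{\mathcal{O}})$, the generators $\epsilon_i$ of $C_0(f_{\mathcal{O}})$ are identified with the $\alpha_i$ via the natural pairing: their products $\epsilon_i\epsilon_j$ satisfy precisely the relations $\alpha_i\alpha_j+\alpha_j\alpha_i=-2\tr(\alpha_i\bar\alpha_j)\cdot\discrd(\mathcal{O})^{-1}$ that the trace-zero generators of $\mathcal{O}$ satisfy, so the isomorphism is forced. For $f\mapsto C_0(f)\mapsto f_{C_0(f)}$, the dual basis of the standard Clifford basis of $C_0(f)$ pulled into $Q^{0}$ produces a Gram matrix equal, after scaling by $\discrd(C_0(f))$, to $M$ itself, up to a unit of $R^{\times}$. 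The discriminant identity $\discrd(\mathcal{O})=d(f_{\mathcal{O}})$ then drops out of the determinant of this scaled dual Gram matrix, since dualization inverts the determinant and the triple scaling by $\discrd(\mathcal{O})$ in three variables exactly reproduces the required power.

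Finally, for the bijection between similarity classes and isomorphism classes, an $R$-algebra isomorphism $\mathcal{O}\to\mathcal{O}'$ restricts to an $R$-linear isometry of the trace pairing on $Q^{0}$, so it transports one dual basis to the other and sends $f_{\mathcal{O}}$ to $f_{\mathcal{O}'}$ up to a global unit in $R^{\times}$, giving a similarity. Conversely, a similarity $f'(x)=uf(Tx)$ with $u\in R^{\times}$ and $T\in GL_3(R)$ extends functorially through the Clifford construction to an $R$-algebra isomorphism $C_0(f)\cong C_0(f')$, the factor $u$ being absorbed into a rescaling of the Clifford generators. The main obstacle is bookkeeping: tracking the interaction between the scaling by $\discrd(\mathcal{O})$, the dualization with respect to the trace form, and the sign conventions in the Clifford multiplication, and in particular checking that the correct equivalence relation on the form side is \emph{similarity} rather than strict $GL_3(R)$-equivalence. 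The PID assumption is used throughout to ensure freeness of $\mathcal{O}$, $\mathcal{O}^{\sharp}$, and $\Lambda$, so that all these verifications can be done in bases rather than in more delicate ideal-theoretic language.
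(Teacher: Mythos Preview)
The paper does not prove this theorem; it is quoted verbatim from \cite[Theorem 4.3]{Lem11} and used as a black box. The explicit formulas for $f_0,f_1,f_2,f_3$ and their norms and traces that appear immediately after the statement are not a proof but rather a specialization over $\mathbb{Z}$ that the paper needs for later computations (Proposition~\ref{CD2}). So there is no in-paper argument to compare your proposal against.

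That said, your outline has the right architecture but two concrete slips. First, you invoke Proposition~\ref{41} to trace-normalize an arbitrary order over an arbitrary PID; that proposition is proved only for Eichler orders over $\mathbb{Z}$, and its proof genuinely uses the local structure at $2$ to rule out the all-even-trace case. For the theorem at hand no such normalization is needed: the lattice $\Lambda=\mathcal{O}^{\sharp}\cap Q^{0}$ is defined intrinsically, and over a PID it is automatically free of rank $3$, so you can simply pick any basis. Second, the relation you write, $\alpha_i\alpha_j+\alpha_j\alpha_i=-2\tr(\alpha_i\bar\alpha_j)\cdot\discrd(\mathcal{O})^{-1}$, is incorrect: for trace-zero quaternions one has $\bar\alpha_i=-\alpha_i$ and hence $\alpha_i\alpha_j+\alpha_j\alpha_i=-\tr(\alpha_i\bar\alpha_j)$, a scalar with no $\discrd(\mathcal{O})$ factor. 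The scaling by $\discrd(\mathcal{O})$ enters only in the definition of $f_{\mathcal{O}}$ on the dual side, not in the anticommutation relations inside $\mathcal{O}$ itself. The actual matching of $C_0(f_{\mathcal{O}})$ with $\mathcal{O}$ runs through the multiplication table displayed just after the theorem (the relations $i^{2}=ri-bc$, $jk=a\bar{i}$, etc.), and the verification that the dual basis $f_1,f_2,f_3$ satisfies $\n(Nf_i)=N\cdot(\text{diagonal entry})$, $\tr(Nf_i\overline{Nf_j})=N\cdot(\text{off-diagonal entry})$ where $N=\discrd(\mathcal{O})$; this is where the scaling is absorbed. Your treatment of the similarity/isomorphism correspondence in the last paragraph is fine.
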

Assume that $f=(a,b,c,r,s,t), d_f=d$. We have
\begin{equation*}
i^2=re_1-bc,jk=a\overline{i}
\end{equation*}
\begin{equation*}
j^2=se_2-ac,ki=b\overline{j}
\end{equation*}
\begin{equation*}
k^2=te_3-ab,ij=c\overline{k}
\end{equation*}
By the definetion of dual basis, it follows that $\mathcal{O}^{\sharp}\supset\mathcal{O}$ and $\tr(f_0)=1, \tr(f_1)=\tr(f_2)=\tr(f_3)=0$, where 
\begin{center}
$df_0=d-2(abc+rst)+(ar+st)i+(bs+rt)j+(ct+rs)k$,
\end{center}
\begin{center}
$df_1=ar+st-2ai-tj-sk$,
\end{center}
\begin{center}
$df_2=bs+rt-ti-2bj-rk$,
\end{center}
\begin{center}
$df_3=ct+rs-si-rj-2ck$,
\end{center}
and 
\begin{center}
$\n(Nf_1)=Na,\tr(Nf_2\overline{Nf_3})=Nr$,
\end{center}
\begin{center}
$\n(Nf_2)=Nb,\tr(Nf_3\overline{Nf_1})=Ns$,
\end{center}
\begin{center}
$\n(Nf_3)=Nc,\tr(Nf_1\overline{Nf_2})=Nt$.
\end{center}
Recalling $\mathcal{O}$ contains a basis of $Q$ over $\mathbb{Q}$, it follows that $\langle f_1, f_2, f_3\rangle$ form a $\mathbb{Q}-$basis for the trace-zero elements of $Q$ (that is, $\langle i,j,k\rangle$ can be represented by $\langle f_1, f_2, f_3\rangle$ over $\mathbb{Q}$). It is straightforward to check that $f_{\mathcal{O}}$ is positive definite if and only if $\mathcal{O}$ is positive definite. We also note that $$2\textnormal{card}(\Aut(\mathcal{O}))=|\Aut(f_{\mathcal{O}})|,$$ and $\mathcal{O}$ ramifies at $p$ if and only if $f_{\mathcal{O}}$ is anisotropic at $p$.

\begin{prop}\label{CQD} 
Let $N$ be a product of s distinct odd primes, and for an order $\mathcal{O}$ we denote $f_{\mathcal{O}}$ by $f$. 
\par (1)Let $\mathcal{O}\subset Q_{\Nodd}$ be an Eichler order of level $N/\Nodd$, then $d_f=N, N_f=4N$ and $f$ is anisotropic only in the $p$-adic field for $p\mid\Nodd$. The genus which $f$ belongs to is denoted by $G_{4N,N,\Nodd}$.
\par (2)Let $\mathcal{O}\subset Q_{\Nodd}$ be an Eichler order of level $2N/\Nodd$, then $d_f=2N, N_f=8N$ and $f$ is anisotropic only in the $p$-adic field for $p\mid\Nodd$. The genus which $f$ belongs to is denoted by $G_{8N,2N,\Nodd}$.
\par (3)Let $\mathcal{O}\subset Q_{2\Neven}$ be an Eichler order of level $N/\Neven$, then $d_f=2N, N_f=8N$ and $f$ is anisotropic only in the $p$-adic field for $p\mid2\Neven$. The genus which $f$ belongs to is denoted by $G_{8N,2N,2\Neven}$.
\end{prop}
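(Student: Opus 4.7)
The plan is to follow the blueprint already established for Proposition~\ref{OQD}, exploiting the two main structural facts about the Clifford correspondence: Theorem~\ref{Lem}, which gives the bijection $\mathcal{O}\leftrightarrow f_{\mathcal{O}}$ together with the discriminant equality $\discrd(\mathcal{O})=d(f_{\mathcal{O}})$; and the already-recorded observation that $\mathcal{O}$ ramifies at $p$ if and only if $f_{\mathcal{O}}$ is anisotropic at $p$. Granting these, the discriminant and the anisotropy portions of each of (1), (2), (3) are essentially immediate. Indeed, in every case the Eichler order is hereditary (its level is squarefree and coprime to the ramification set of $Q$), so Theorem~\ref{Eod} gives $\discrd(\mathcal{O})=\disc(Q)\cdot F$, where $F$ is the Eichler level. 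This yields $d_f=\Nodd\cdot(N/\Nodd)=N$ in case (1), $d_f=\Nodd\cdot(2N/\Nodd)=2N$ in case (2), and $d_f=2\Neven\cdot(N/\Neven)=2N$ in case (3). For anisotropy, $Q_{\Nodd}$ ramifies exactly at the primes dividing $\Nodd$, and $Q_{2\Neven}$ exactly at those dividing $2\Neven$, which is precisely the claim.

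The remaining work is the level. Since $N_f=4d_f/m_f$, it suffices to compute $m_f$ locally at each rational prime. The strategy I would carry out at each prime $p$ is exactly the one used in the proof of Proposition~\ref{OQD}: pick the standard local presentation of $\mathcal{O}_p$ (either $\mathbb{Z}_p+i\mathbb{Z}_p+j\mathbb{Z}_p+ij\mathbb{Z}_p$ for odd ramified primes, the $2\times 2$ matrix model $\bigl(\begin{smallmatrix}\mathbb{Z}_p&\mathbb{Z}_p\\p^n\mathbb{Z}_p&\mathbb{Z}_p\end{smallmatrix}\bigr)$ for split primes of level $p^n$, or the exotic $\mathbb{Z}_2$-order $\mathbb{Z}_2+\tfrac{1+i}{2}\mathbb{Z}_2+j\mathbb{Z}_2+\tfrac{1+i}{2}j\mathbb{Z}_2$ with $i^2=-3,\,j^2=2$ when $Q$ ramifies at $2$), compute the dual basis $f_1,f_2,f_3$ of the trace-zero part $\mathcal{O}_p^{\sharp}\cap Q^0$ using the explicit formulas for $df_i$ given just before the proposition, and then read off the local ternary form $f_p=\discrd(\mathcal{O}_p)\cdot n(xf_1+yf_2+zf_3)$. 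From a diagonalized or simple Gram matrix one reads $v_p(d_{f_p})$ and $v_p(m_{f_p})$, and invokes the $GL_3(\mathbb{Z}_p)$-invariance of these valuations to assemble $m_f$ globally via CRT, then $N_f=4d_f/m_f$. The outputs in each local case should match: split unramified primes contribute trivially, odd ramified primes contribute $v_p(m_{f_p})=0$, split primes in the Eichler level contribute $v_p(m_{f_p})=0$, and at $p=2$ one of the three local pictures applies.

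The main obstacle, as in Proposition~\ref{OQD}, is the behaviour at $p=2$. In cases (1) and (2) the algebra $Q_{\Nodd}$ splits at $2$, so the Eichler local model $\bigl(\begin{smallmatrix}\mathbb{Z}_2&\mathbb{Z}_2\\2^n\mathbb{Z}_2&\mathbb{Z}_2\end{smallmatrix}\bigr)$ with $n=0$ (case 1) or $n=1$ (case 2) can be handled directly; a short computation with the matrix units gives $v_2(d_{f_2})=0$, respectively $v_2(d_{f_2})=1$, and $v_2(m_{f_2})=0$, producing the predicted powers of $2$ in $N_f=4N$ and $N_f=8N$. The delicate case is (3), where $Q_{2\Neven}$ ramifies at $2$ and one must work with the order generated by $\tfrac{1+i}{2}$; here the trace-zero lattice has to be written out carefully, the dual computed, and $n(xf_1+yf_2+zf_3)$ reduced modulo small powers of $2$, to verify that $v_2(d_{f_2})=1$, $v_2(m_{f_2})=0$, and hence $v_2(N_f)=3$. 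Once the $2$-adic case is under control, the odd primes follow by the same routine as in Proposition~\ref{OQD}, and the three items of the proposition are established.
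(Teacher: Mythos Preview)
Your proposal is correct in outline, and the discriminant and anisotropy portions match the paper's reasoning exactly (both follow from Theorem~\ref{Lem} and the remark that $\mathcal{O}$ ramifies at $p$ iff $f_{\mathcal{O}}$ is anisotropic at $p$). However, for the level you work much harder than necessary: the paper's entire proof of this proposition is one sentence, namely an appeal to Theorem~\ref{Leh1} (Lehman's constraints between level and discriminant). Once $d_f$ is known to be $N$ (odd squarefree) or $2N$, Theorem~\ref{Leh1} forces the level uniquely: for $d_f=N$ one has $d_0=0$, hence $n_0=2$, and each odd $d_i=1$ forces $n_i=1$, giving $N_f=4N$; for $d_f=2N$ one has $d_0=1$, hence $n_0=3$, and again $n_i=1$, giving $N_f=8N$. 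No local computation of $m_f$, no dual-basis calculation, and in particular no delicate $2$-adic analysis of the ramified order is needed. Your approach would work and is the natural continuation of the method in Proposition~\ref{OQD}, but the paper's shortcut via Lehman is substantially cleaner here precisely because the discriminants $N$ and $2N$ are squarefree.
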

\begin{proof}
By Lemma \ref{Leh1}, the level of a positive definite ternary quadratic form with discriminant $N$(resp. $2N$) is $4N$(resp. $8N$).
\end{proof}

\section{Proof of Theorem 1.1}

Now, we   give  prove the Theorem 1.1 in this section.

Based on Lehman's results (Theorem \ref{Leh2} and Lemma \ref{Leh3}), we can easily generalize the same procedure that constructed automorphs or equivalences involving $\phi$ for some $f$ to the $p$-adic integers, thus making it feasible to compute $\phi_p(f)$. This facilitates a straightforward calculation of $\phi_p(f)$.
 Additionally, Proposition \ref{ISO} dictates that for $f \in G_{4N,N^2,\Nodd}$, we have $\phi_2(f) \in G_{4N,16N^2,\Nodd}$.    Assmue that
\begin{equation*}
f=(a,b,c,r,2s,2t),\phi_2(f)=(a,4b,4c,4r,4s,4t)
\end{equation*}
where $2\nmid ac$, then
\begin{center}
$f\underset{2}{\sim}-x^2-yz$, $\phi_2(f)\underset{2}{\sim}-x^2-4yz$.
\end{center}
and for $p\mid\Nodd$, we can choose $u$ such that $\left(\frac{u}{p}\right)=-1$ and $2\nmid u$, then
\begin{center}
$f\underset{p}{\sim}-ux^2-py^2+upz^2$, $\phi_2(f)\underset{p}{\sim}-ux^2-4py^2+4upz^2\underset{p}{\sim}-ux^2-py^2+upz^2$.
\end{center}
\begin{center}
$f\underset{p}{\sim}-x^2-pyz$, $\phi_2(f)\underset{p}{\sim}-x^2-4pyz\underset{p}{\sim}-x^2-pyz$, for $p\mid N/\Nodd$
\end{center}
and 
\begin{center}
$f\underset{p}{\sim}-x^2-yz$, $\phi_2(f)\underset{p}{\sim}-x^2-4yz\underset{p}{\sim}-x^2-yz$, for $p\nmid N$.
\end{center}
By Proposition \ref{Leh2} and Proposition \ref{ISO}, let $N_r=p_1...p_r$, then we have the following bijections:
\begin{align*}
\phi_{p_1}\circ...\circ\phi_{p_r} :G_{4N,N^2,\Nodd} & \to G_{4N,N^2/N_r,\Nodd}\\
f & \mapsto \phi_{p_1}\circ...\circ\phi_{p_r}(f);
\end{align*}
\begin{align*}
\phi_{p_1}\circ...\circ\phi_{p_r} :G_{4N,4N^2,\Nodd} & \to G_{4N,4N^2/N_r,\Nodd}\\
f & \mapsto \phi_{p_1}\circ...\circ\phi_{p_r}(f);
\end{align*}\begin{align*}
\phi_{p_1}\circ...\circ\phi_{p_r} :G_{4N,4N^2,2\Neven} & \to G_{4N,4N^2/N_r,2\Neven}\\
f & \mapsto \phi_{p_1}\circ...\circ\phi_{p_r}(f);
\end{align*}
\begin{align*}
\phi_{2} :G_{4N,N^2/N_r,\Nodd} & \to G_{4N,16N^2/N_r,\Nodd}\\
f & \mapsto \phi_2(f).
\end{align*}
It is the same as the above to calculate $\phi_p(f)$. There are $2^{2s+1}$ genera when $\Nodd,\Neven$ and $N_r$ run over all factors which divides $N$. Noting that there are $2^{s-1}$ (resp. $2^{s}$) genera in $C(4N,N^2)$ (resp. $C(4N,4N^2)$)\cite[Lemma 3]{Leh92}. By Proposition \ref{m1}, there are $2^{2s+1}$ genera in the set of all primitive positive definite ternary quadratic forms of level $4N$. For simplicity of presentation, let $p\mid\Nodd$ (resp. $p\mid\Neven$) and $q\mid N/\Nodd$ (resp. $q\mid N/\Neven$), and we denote the genera as follows:

\begin{table}[H]
\centering
\caption{Genera}
\begin{tabular}{cccc}
\toprule[2pt]
    Genus&2-adic&$p$-adic,$\left(\frac{u}{p}\right)=-1$&$q$-adic\\
\midrule   
   $G_{4N,N^2/N_r,\Nodd}$&$-N_rx^2-yz$&$upN_r^{-1}x^2-pN_r^{-1}y^2-uN_rz^2$&$-N_rx^2-qN_r^{-1}yz$\\
   $G_{4N,4N^2/N_r,\Nodd}$&$-N_rx^2-2yz$&$upN_r^{-1}x^2-pN_r^{-1}y^2-uN_rz^2$&$-N_rx^2-qN_r^{-1}yz$\\
   $G_{4N,4N^2/N_r,2\Neven}$&$N_r^{-1}x^2+N_r^{-1}y^2+N_rz^2$&$upN_r^{-1}x^2-pN_r^{-1}y^2-uN_rz^2$&$-N_rx^2-qN_r^{-1}yz$\\ 
$G_{4N,16N^2/N_r,\Nodd}$&$-N_rx^2-4yz$&$upN_r^{-1}x^2-pN_r^{-1}y^2-uN_rz^2$&$-N_rx^2-qN_r^{-1}yz$\\  
\bottomrule[2pt]     
\end{tabular}
\end{table}

\section{Proof of Theorem 1.2 and Theorem 1.3}

Now we will discuss the connection among $M_0, M_1$ and Clifford algebra using $\lambda_4$ and $\phi$.

\begin{prop}\label{CD2}
(1)Let $\discrd(\mathcal{O})=N$, where $N=p_1...p_s$, then $\phi_{p_1}\circ...\circ\phi_{p_s}(f_{\mathcal{O}^{0}})=f_{\mathcal{O}}$.\\
(2)Let $\discrd(\mathcal{O})=2N$, where $N=p_1...p_s$, then $\phi_{p_1}\circ...\circ\phi_{p_s}\circ\phi_{2}(f_{S^{0}})=f_{\mathcal{O}}$.
\end{prop}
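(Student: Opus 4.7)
The plan is to verify both identities by reducing them to local computations at each prime, exploiting the explicit relationship between the lattices $\mathcal{O}^{0}$, $(\mathcal{O}^{\sharp})^{0}$, and the different ideal $\mathcal{D}$. Fix a basis $1, \alpha_1, \alpha_2, \alpha_3$ of $\mathcal{O}$ normalized by Proposition~\ref{41}, so that $\mathcal{O}^{0} = \langle \alpha_1, \alpha_2, 2\alpha_3 - 1 \rangle$. The Clifford dual basis $f_1, f_2, f_3$ spans $(\mathcal{O}^{\sharp})^{0}$, and the identities $\n(Nf_i) = N \cdot (\text{diagonal coefficient of } f_{\mathcal{O}})$ and $\tr(Nf_i \overline{Nf_j}) = N \cdot (\text{off-diagonal coefficient of } f_{\mathcal{O}})$ recorded before Proposition~\ref{CQD} express $f_{\mathcal{O}}$ as $\tfrac{1}{N}$ times the norm form on $N(\mathcal{O}^{\sharp})^{0} = \mathcal{D}^{0}$. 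Thus part (1) amounts to showing that the Gram matrix of the norm on $\mathcal{O}^{0}$, after the composition $\phi_{p_1} \circ \cdots \circ \phi_{p_s}$, coincides up to $GL_3(\mathbb{Z})$-equivalence with $\tfrac{1}{N}$ times the Gram matrix of the norm on $\mathcal{D}^{0}$.

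For part (1), I would verify this prime by prime. At a prime $p \mid N$, use the standard local description $\mathcal{O}_p = \mathbb{Z}_p\langle 1, i, j, k \rangle$ with $i^2 = u$, $j^2 = p$, $\left(\frac{u}{p}\right) = -1$, and the two-sided maximal ideal $P_p = j\mathcal{O}_p$ satisfying $P_p^{2} = p\mathcal{O}_p$. A direct computation gives $\mathcal{O}_p^{0} = \mathbb{Z}_p i + \mathbb{Z}_p j + \mathbb{Z}_p k$ with norm form $-ux^2 - py^2 + upz^2$, while $\mathcal{D}_p^{0} = P_p \cap Q_p^{0} = \mathbb{Z}_p(ip) + \mathbb{Z}_p j + \mathbb{Z}_p k$, scaled by $\tfrac{1}{p}$, gives $-upx^2 - y^2 + uz^2$. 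Putting the first form into Lehman's normal form via a permutation of variables (so that $p \parallel a$, $p \mid b$, $p \nmid c$) and then applying $\phi_p$, which in this case acts by the conjugation matrix $P = \mathrm{diag}(p^{-1/2}, p^{-1/2}, p^{1/2})$ corresponding to $g = 1$, $h = 2$, one arrives at $-x^2 + uy^2 - upz^2$, which agrees with the target form up to a $GL_3(\mathbb{Z}_p)$-permutation of variables. At primes $p \nmid N$ both forms are locally unimodular and $\phi_p$ is not applied. Iterating over $p_1, \ldots, p_s$ (invoking Proposition~\ref{ISO} for compatibility with the isotropy conditions) gives the local equality at every prime; combined with the functoriality of both constructions (Proposition~\ref{OQE} and Theorem~\ref{Lem}) and the fact that both land in the common genus $G_{4N, N, \Nodd}$, this upgrades to the desired equality of classes.

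Part (2) follows by the same strategy with an additional initial $\phi_2$ step. Now $\discrd(\mathcal{O}) = 2N$, and $\mathcal{O}$ is either ramified at $2$ (with local structure $\mathcal{O}_2 \cong \mathbb{Z}_2\langle 1, (1+i)/2, j, (1+i)j/2 \rangle$ where $i^2 = -3$, $j^2 = 2$) or has Eichler level $2$ at the split prime $2$. The auxiliary lattice $S^{0}$ was introduced in Section~4.1 precisely so that $f_{S^{0}}$ is primitive of level $8N$ even when $\mathcal{O}$ carries half-integer generators at $2$. A $2$-adic computation parallel to the odd-prime case, but using the matrix $P = \mathrm{diag}(2^{-3/2}, 2^{-3/2}, 2^{3/2})$ corresponding to $g = 3$, $h = 6$, verifies that $\phi_2(f_{S^{0}})$ locally at $2$ reproduces the $2$-adic part of $f_{\mathcal{O}}$; after this step the odd-prime composition $\phi_{p_1} \circ \cdots \circ \phi_{p_s}$ proceeds exactly as in part (1).

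The main technical obstacle is the bookkeeping of Lehman's normal form at each prime and the way the local scaling matrices $P$ assemble into a single global $GL_3(\mathbb{Z})$-equivalence. The $2$-adic verification in part (2) is the most delicate: the half-integer generator $(1+i)/2$ characteristic of ramified Eichler orders at $2$ must be handled carefully, and one has to track how the auxiliary lattice $S^{0}$ (designed precisely to avoid this half-integer) interacts with $\phi_2$ to recover the correct Clifford data at $2$.
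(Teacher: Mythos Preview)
Your local approach establishes only that $\phi_{p_1}\circ\cdots\circ\phi_{p_s}(f_{\mathcal{O}^{0}})$ and $f_{\mathcal{O}}$ lie in the same \emph{genus}, not in the same \emph{class}. Local equivalence at every prime is precisely the definition of semi-equivalence, and your appeal to ``functoriality'' does not supply the missing global step: Proposition~\ref{OQE} and Theorem~\ref{Lem} say that both constructions respect equivalence and semi-equivalence, but they do not force two genus-mates built from the same $\mathcal{O}$ to coincide as classes. The natural way to close this gap would be to use that $M_0$ is a bijection onto the classes in the genus, but that is Corollary~\ref{M}, which is deduced \emph{from} the present proposition via the commutative diagrams of Theorem~6.1; invoking it here is circular.

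The paper avoids this by a global, explicit computation rather than localization. It starts on the Clifford side with an arbitrary $f=(pa,b,c,r,ps,pt)\in C(4p,p)$, builds $\mathcal{O}=C_0(f)$ with its explicit multiplication table, writes down the dual basis $f_0,f_1,f_2,f_3$ in coordinates, and then checks by a determinant calculation that $\{f_0,f_1,pf_2,pf_3\}$ is a $\mathbb{Z}$-basis of $\mathcal{O}$ itself. Since $\tr(f_1)=\tr(pf_2)=\tr(pf_3)=0$, this basis spans $\mathcal{O}^0$, and one reads off $f_{\mathcal{O}^{0}}=(a,pb,pc,pr,ps,pt)=\phi_p^{-1}(f)$ as an identity of explicit sextuples, not merely up to local equivalence. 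The general squarefree $N$ follows by iterating this (Remark~\ref{rem1} providing the normal form), and part~(2) is handled by the analogous computation at $2$ together with $f_{S^0}=\lambda_4^{-1}(f_{\mathcal{O}^0})$ from Proposition~\ref{CD1}. Your observation that the different $\mathcal{D}=N\mathcal{O}^{\sharp}$ mediates between the two lattices is exactly right and is what the paper's change-of-basis matrix encodes, but the argument must be carried out over $\mathbb{Z}$, not prime by prime.
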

\begin{proof}
Suppose that $N=p$. By Proposition \ref{CQD}, assume $f=(pa,b,c,r,ps,pt)$ be a positive definite ternary quadratic form of level $4p$ and discriminant $p$. Recalling $\phi_p$:
\begin{equation*}
\phi_p^{-1}((pa,b,c,r,ps,pt))=(a,pb,pc,pr,ps,pt),p\nmid ac.
\end{equation*}
Then disc$(\mathcal{O})=p$, and
\begin{equation*}
\mathcal{O}=\mathbb{Z}+\mathbb{Z}i+\mathbb{Z}j+\mathbb{Z}k.
\end{equation*} 
with
\begin{center}
$i^2=ri-bc\qquad jk=pa\overline{i}$
\end{center}
\begin{center}
$j^2=psj-pac\qquad ki=b\overline{j}$
\end{center}
\begin{center}
$k^2=ptk-pab\qquad ij=c\overline{k}$.
\end{center}
Then
\begin{center}
$f_0=1-2(abc+prst)+(ar+pst)i+(bs+rt)j+(ct+rs)k$,
\end{center}
\begin{center}
$f_1=ar+pst-2ai-tj-sk$,
\end{center}
\begin{center}
$f_2=bs+rt-ti-\frac{2b}{p}j-\frac{r}{p}k$,
\end{center}
\begin{center}
$f_3=ct+rs-si-\frac{r}{p}j-\frac{2c}{p}k$,
\end{center}
We have $\tr(f_1)=\tr(f_2)=\tr(f_3)=0$, and 
\begin{center}
$\n(f_1)=a,\tr(pf_2\overline{pf_3})=pr$,
\end{center}
\begin{center}
$\n(pf_2)=pb,\tr(pf_3\overline{f_1})=ps$,
\end{center}
\begin{center}
$\n(pf_3)=pc,\tr(f_1\overline{pf_2})=pt$.
\end{center}
We will show that
\begin{equation*}
\mathcal{O}=\mathbb{Z}f_0+\mathbb{Z}f_1+\mathbb{Z}pf_2+\mathbb{Z}pf_3.
\end{equation*} 
We have
\begin{equation*}
\begin{pmatrix}
f_0\\
f_1\\
pf_2\\
pf_3
\end{pmatrix}
=
\begin{pmatrix}
1-2(abc+prst) & ar+pst & bs+rt & ct+rs\\
ar+pst & -2a & -t & -s\\
p(bs+rt) & -pt& -2b & -r\\
p(ct+rs) & -ps & -r & -2c
\end{pmatrix}
\begin{pmatrix}
1\\
i\\
j\\
k
\end{pmatrix}
=
M_p
\begin{pmatrix}
1\\
i\\
j\\
k
\end{pmatrix}
\end{equation*}
Since $d_f=p$, we have 
\begin{equation*}
4abc+prst-ar^2-pbs^2-pct^2=1.
\end{equation*}
It is not hard to check that
\begin{equation*}
\det(M_p)=-2(4abc+prst-ar^2-pbs^2-pct^2)+(4abc+prst-ar^2-pbs^2-pct^2)^2=-1.
\end{equation*}
Hence
\begin{equation*}
\mathcal{O}=\mathbb{Z}f_0+\mathbb{Z}f_1+\mathbb{Z}pf_2+\mathbb{Z}pf_3,
\end{equation*} 
where $\tr(f_1)=\tr(pf_2)=\tr(pf_3)=0$, and
\begin{align*}
f_{\mathcal{O}^{0}} & =\n(xf_1+ypf_2+zpf_3)\\
& =\n(f_1)x^2+\n(pf_2)y^2+\n(pf_3)z^2+\tr(pf_2\overline{pf_3})yz+\tr(f_1\overline{pf_3})xz+\tr(f_1\overline{pf_2})xy\\
& =ax^2+pby^2+pcz^2+pryz+psxz+ptxy.
\end{align*}
It is similar for $\discrd(\mathcal{O})=2$. Let $f=(2a,b,c,r,4s,4t)$ be a positive definite ternary quadratic forms of level $8$ and discriminant $2$. Recalling $\phi_2$:
\begin{equation*}
\phi_2^{-1}((2a,b,c,r,4s,4t))=(a,8b,8c,8r,8s,8t),2\nmid ac.
\end{equation*}
Then
\begin{equation*}
\mathcal{O}=\mathbb{Z}f_0+\mathbb{Z}f_1+\mathbb{Z}2f_2+\mathbb{Z}2f_3,
\end{equation*} 
\begin{align*}
f_{\mathcal{O}^{0}} & =\n(xf_1+ypf_2+zpf_3)\\
& =\n(f_1)x^2+\n(2f_2)y^2+\n(2f_3)z^2+\tr(2f_2\overline{2f_3})yz+\tr(f_1\overline{2f_3})xz+\tr(f_1\overline{2f_2})xy\\
& =ax^2+2by^2+2cz^2+2ryz+4sxz+4txy.
\end{align*}
We have
\begin{equation*}
f_{S^{0}}=\lambda_4^{-1}(f_{\mathcal{O}^{0}})=ax^2+8by^2+8cz^2+8ryz+8sxz+8txy. 
\end{equation*}
For $\discrd(\mathcal{O})=N$ or $\discrd(\mathcal{O})=2N$, the proof is similar to the above by Remake \ref{rem1}. 
\end{proof}
By Proposition \ref{CD1} and Proposition \ref{CD2}, we give the following connection among $M_0, M_1$ and Clifford algebras using $\lambda_4$ and $\phi$.
\begin{theorem}
Let $N=p_1...p_s$, and $\phi_N$(resp. $\phi_{2N}$) denote $\phi_{p_1}\circ...\circ\phi_{p_s}$(resp. $\phi_{p_1}\circ...\circ\phi_{p_s}\circ\phi_{2}$). In a positive definite quaternion algebra $Q_{\Nodd}$ (resp. $Q_{2\Neven}$), we choose a complete set of representatives $\{\mathcal{O}_{\mu}\}_{\mu=1,2...T_{\Nodd,F}}$ (resp. $\{\mathcal{O}_{\mu}\}_{\mu=1,2...T_{2\Neven,F}}$) for these types of Eichler orders of level $F$. We give the commutative diagrams as follows.
\begin{equation*}
\xymatrix{
    \{\mathcal{O}_\mu\}_{\mu=1,2...T_{\Nodd,N/\Nodd}} \ar[r]^{C_0} \ar[d]^{M_1} \ar[dr]^{M_0} & G_{4N,N,\Nodd} \ar[d]^{\phi^{-1}_N}\\
    G_{4N,16N^2,\Nodd} \ar[r]^{\lambda_4}  & G_{4N,N^2,\Nodd}
}
\end{equation*}
\begin{equation*}
\xymatrix{
    \{\mathcal{O}_\mu\}_{\mu=1,2...T_{\Nodd,2N/\Nodd}} \ar[r]^{C_0} \ar[d]^{M_0} \ar[dr]^{M_1} & G_{8N,2N,\Nodd} \ar[d]^{\phi^{-1}_{2N}}\\
    G_{4N,4N^2,\Nodd}  & G_{8N,64N^2,\Nodd} \ar[l]^{\lambda_4} 
}
\end{equation*}
\begin{equation*}
\xymatrix{
    \{\mathcal{O}_\mu\}_{\mu=1,2...T_{2\Neven,N/\Neven}} \ar[r]^{C_0} \ar[d]^{M_0} \ar[dr]^{M_1} & G_{8N,2N,2\Neven} \ar[d]^{\phi^{-1}_{2N}}\\
    G_{4N,4N^2,2\Neven} & G_{8N,64N^2,2\Neven} \ar[l]^{\lambda_4}  
}
\end{equation*}
\end{theorem}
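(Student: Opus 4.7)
The theorem assembles three commutative diagrams, and each diagram breaks into two triangles. The plan is to verify every triangle by invoking a single pointwise identity that has already been proved, and then to check the label on each arrow against the genus identifications of the preceding sections.

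First I would handle the triangle $M_{0}=\lambda_{4}\circ M_{1}$ that appears in all three diagrams. On an individual Eichler order $\mathcal{O}_{\mu}$, this is exactly the equality $\lambda_{4}(f_{S_{\mu}^{0}})=f_{\mathcal{O}_{\mu}^{0}}$ of Proposition \ref{CD1}. To see that $\lambda_{4}$ really runs between the genera printed in the diagram, I would combine Propositions \ref{OQD} and \ref{SQD} (which pin down the genus of $f_{\mathcal{O}_{\mu}^{0}}$ and $f_{S_{\mu}^{0}}$, respectively) with Proposition \ref{OQTC}, which provides the bijectivity of $\lambda_{4}$ between the corresponding class sets.

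Next I would handle the triangle involving the Clifford construction $C_{0}$. In the first diagram the assertion $M_{0}=\phi_{N}^{-1}\circ C_{0}$ is Proposition \ref{CD2}(1) after rearranging: $\phi_{N}(f_{\mathcal{O}_{\mu}^{0}})=f_{\mathcal{O}_{\mu}}$. In the second and third diagrams the corresponding statement is $M_{1}=\phi_{2N}^{-1}\circ C_{0}$, which is Proposition \ref{CD2}(2) rearranged: $\phi_{2N}(f_{S_{\mu}^{0}})=f_{\mathcal{O}_{\mu}}$. In each case I would supplement the identity by checking that $C_{0}(\mathcal{O}_{\mu})=f_{\mathcal{O}_{\mu}}$ lies in the claimed genus $G_{4N,N,\Nodd}$ or $G_{8N,2N,\ast}$, for which I would quote Proposition \ref{CQD}, and that $\phi_{N}$ (respectively $\phi_{2N}$) preserves the anisotropic locus, for which I would quote Proposition \ref{ISO}(1).

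The substantive mathematics is therefore already in place; the real work is bookkeeping, and that is where I expect the main obstacle to lie. One must verify, for every arrow in every diagram, that not only the discriminant and the level but also the subscript $\Nodd$ or $2\Neven$ labelling the anisotropic places transports correctly. The preservation of the anisotropic locus under $\phi_{p}$ is handled by Proposition \ref{ISO}(1), and under $\lambda_{4}$ by Proposition \ref{ISO}(2); the preservation under $C_{0}$ is built into the theory of even Clifford algebras (a quaternion order ramifies at $p$ if and only if its Clifford-inverse form is anisotropic at $p$, as noted just before Proposition \ref{CQD}).

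Finally, to lift the identities from a single representative $\mathcal{O}_{\mu}$ to the complete set $\{\mathcal{O}_{\mu}\}_{\mu=1,\dots,T_{\ast,\ast}}$, I would observe that all three maps $M_{0}$, $M_{1}$, $C_{0}$ descend to isomorphism classes: this is Proposition \ref{OQE} for $M_{0}$, its evident analogue (obtained by the same argument applied to $S^{0}=\mathbb{Z}+2\mathcal{O}$ instead of $\mathcal{O}^{0}$) for $M_{1}$, and Theorem \ref{Lem} for $C_{0}$. Once the three triangles are verified pointwise on each $\mathcal{O}_{\mu}$ and the genus labels are certified, the three diagrams commute as stated.
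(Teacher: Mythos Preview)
Your proposal is correct and follows exactly the paper's approach: the paper derives the theorem in a single sentence, ``By Proposition \ref{CD1} and Proposition \ref{CD2}'', and you have simply unpacked this into the two triangles $M_{0}=\lambda_{4}\circ M_{1}$ (from \ref{CD1}) and $M_{0}=\phi_{N}^{-1}\circ C_{0}$ respectively $M_{1}=\phi_{2N}^{-1}\circ C_{0}$ (from \ref{CD2}), together with the genus-label bookkeeping via Propositions \ref{OQD}, \ref{SQD}, \ref{CQD}, \ref{OQTC}, \ref{ISO} that the paper leaves implicit.
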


Since $C_0$ is a bijection, $2\textnormal{card}(\Aut(\mathcal{O}))=|\Aut(f_{\mathcal{O}})|$
and $\lambda_4$, $\phi$ are bijections of equivalence classes which preserve automorphism counts,
it is not hard to prove the following corollary.
\begin{cor}\label{M}
$M_0$ and $M_1$ are bijections, and  
$$2\textnormal{card}(\Aut(\mathcal{O}))=|\Aut(f_{\mathcal{O}^0})|=|\Aut(f_{S^0})|.$$
\end{cor}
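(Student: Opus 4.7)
The plan is to read off both claims directly from the commutative diagrams in the preceding theorem, combined with the bijective/automorphism-preserving properties of $C_0$, $\phi_p$, and $\lambda_4$ that have been assembled in the earlier sections.

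First, for the bijectivity of $M_0$: the commutative diagrams identify $M_0 = \phi_N^{-1}\circ C_0$ (respectively $M_0 = \lambda_4 \circ \phi_{2N}^{-1}\circ C_0$ in the second and third diagrams, but we can just invoke Proposition \ref{CD1} to reduce to the $M_0$-case). By Theorem \ref{Lem}, $C_0$ is a bijection between isomorphism classes of quaternion $\mathbb{Z}$-orders and similarity classes of non-degenerate integral ternary quadratic forms; restricting to the Eichler orders of the prescribed level in $Q_{\Nodd}$ or $Q_{2\Neven}$, Proposition \ref{CQD} shows that the image lies exactly in the asserted genus, so $C_0$ is a bijection from $\{\mathcal{O}_\mu\}$ onto $G_{4N,N,\Nodd}$ (and similarly for the other two cases). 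The Lehman maps $\phi_p$ and their iterates are bijections of equivalence classes by Theorem \ref{Leh2}, and by Proposition \ref{ISO} they preserve the anisotropy data, hence they restrict to bijections between the genera appearing at the bottom of the diagrams. Therefore $M_0$ is a composition of bijections.

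Second, for the bijectivity of $M_1$: Proposition \ref{CD1} establishes $\lambda_4(f_{S^0}) = f_{\mathcal{O}^0}$, which means $M_0 = \lambda_4\circ M_1$. Since $\lambda_4$ is a bijection of the relevant classes by Proposition \ref{OQTC} and $M_0$ is a bijection by the previous paragraph, $M_1 = \lambda_4^{-1}\circ M_0$ is also a bijection. (The corresponding statement in the three genus cases follows verbatim from the three diagrams.)

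For the automorphism count, we simply chain the known identities along the diagrams. The relation $2\,\textnormal{card}(\Aut(\mathcal{O})) = |\Aut(f_\mathcal{O})|$ is recorded immediately after Theorem \ref{Lem}. Proposition \ref{m1}(1) gives $|\Aut(f)| = |\Aut(\phi_p(f))|$, so iterating yields $|\Aut(f_\mathcal{O})| = |\Aut(\phi_N^{-1}(f_\mathcal{O}))| = |\Aut(f_{\mathcal{O}^0})|$. Proposition \ref{OQTC} gives $|\Aut(f)| = |\Aut(\lambda_4(f))|$, hence $|\Aut(f_{\mathcal{O}^0})| = |\Aut(\lambda_4^{-1}(f_{\mathcal{O}^0}))| = |\Aut(f_{S^0})|$ by Proposition \ref{CD1}. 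Combining these gives the claimed chain of equalities
\[
2\,\textnormal{card}(\Aut(\mathcal{O})) = |\Aut(f_{\mathcal{O}^0})| = |\Aut(f_{S^0})|.
\]

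There is no real obstacle: every ingredient has already been verified in its own proposition, and the only bookkeeping is to track each case of the commutative diagram (distinguishing $Q_{\Nodd}$ with Eichler level $N/\Nodd$, $Q_{\Nodd}$ with level $2N/\Nodd$, and $Q_{2\Neven}$ with level $N/\Neven$) to confirm that the compositions $\phi_N^{-1}\circ C_0$ and $\phi_{2N}^{-1}\circ C_0$ land in the asserted target genera. Once that is verified the corollary is purely formal.
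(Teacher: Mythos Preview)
Your proposal is correct and follows essentially the same route as the paper: the paper's own justification for this corollary is the one-line remark that $C_0$ is a bijection with $2\,\textnormal{card}(\Aut(\mathcal{O}))=|\Aut(f_{\mathcal{O}})|$, and that $\lambda_4$ and $\phi$ are bijections of equivalence classes preserving automorphism counts. You have simply made explicit the chain $M_0=\phi_N^{-1}\circ C_0$ (resp.\ $M_0=\lambda_4\circ\phi_{2N}^{-1}\circ C_0$), $M_1=\lambda_4^{-1}\circ M_0$, and the corresponding automorphism identities via Propositions~\ref{m1}(1) and~\ref{OQTC}, which is exactly what the paper leaves implicit.
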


\begin{cor}\label{T}
Let $|G|$ denote the number of classes in genus $G$, then
\begin{equation*}
|G_{4N,16N^2/N_r,\Nodd}|=|G_{4N,N^2/N_r,\Nodd}|=T_{\Nodd,N/\Nodd},
\end{equation*}
\begin{equation*}
|G_{4N,4N^2/N_r,\Nodd}|=T_{\Nodd,2N/\Nodd},
\end{equation*}
\begin{equation*}
|G_{4N,4N^2/N_r,2\Neven}|=T_{2\Neven,N/\Neven}.
\end{equation*}
\end{cor}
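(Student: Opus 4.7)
\noindent\textbf{Proof proposal for Corollary \ref{T}.}

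The plan is to obtain each equality as a straightforward chain of bijections, using Corollary \ref{M} as the base case (corresponding to $N_r = 1$) and then the maps $\phi_{p_i}$ and $\phi_2$ from the proof of Theorem \ref{theo:cla} to move between genera of the same level but different discriminants.

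First, I would handle the base case $N_r = 1$. By Corollary \ref{M}, the map $M_0$ sends the type set of Eichler orders of level $N/\Nodd$ in $Q_{\Nodd}$ bijectively onto the classes in $G_{4N,N^2,\Nodd}$, so $|G_{4N,N^2,\Nodd}| = T_{\Nodd,N/\Nodd}$; similarly $M_0$ yields $|G_{4N,4N^2,\Nodd}| = T_{\Nodd,2N/\Nodd}$ and $|G_{4N,4N^2,2\Neven}| = T_{2\Neven,N/\Neven}$, while $M_1$ yields $|G_{4N,16N^2,\Nodd}| = T_{\Nodd,N/\Nodd}$.

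Next, for general $N_r = p_1\cdots p_r$ dividing $N$, I would invoke the bijections exhibited in the proof of Theorem \ref{theo:cla}: the composite $\phi_{p_1}\circ\cdots\circ\phi_{p_r}$ restricts to bijections
\begin{equation*}
G_{4N,N^2,\Nodd}\longrightarrow G_{4N,N^2/N_r,\Nodd},\quad G_{4N,4N^2,\Nodd}\longrightarrow G_{4N,4N^2/N_r,\Nodd},\quad G_{4N,4N^2,2\Neven}\longrightarrow G_{4N,4N^2/N_r,2\Neven},
\end{equation*}
and $\phi_2$ is a bijection from $G_{4N,N^2/N_r,\Nodd}$ onto $G_{4N,16N^2/N_r,\Nodd}$. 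That each $\phi_p$ is a genus-preserving bijection of equivalence classes is exactly Theorem \ref{Leh2} combined with Proposition \ref{m1}(2) and Proposition \ref{ISO}(1), all established earlier. Composing these with the base case bijections yields the three claimed equalities in one line each.

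There is essentially no obstacle: the corollary is a bookkeeping consequence of results already in place, and the only care needed is to verify that the domains and codomains match up correctly (i.e., that $\phi_2$ takes the genus anisotropic at primes dividing $\Nodd$ to the genus anisotropic at the same primes, rather than introducing $2$-adic anisotropy), which is precisely the computation already carried out at the start of Section 5 showing $f \underset{2}{\sim} -x^2 - yz$ and $\phi_2(f) \underset{2}{\sim} -x^2 - 4yz$ are both isotropic at $2$. Thus the proof reduces to citing Corollary \ref{M} and the bijections displayed in Section 5.
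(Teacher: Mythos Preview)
Your proposal is correct and matches the paper's implicit argument: Corollary \ref{T} is stated without proof immediately after Corollary \ref{M}, and is meant to follow exactly as you describe, by combining the bijections $M_0, M_1$ from Corollary \ref{M} with the Lehman bijections $\phi_{p_1}\circ\cdots\circ\phi_{p_r}$ and $\phi_2$ displayed in Section 5.
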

\subsection{Proof of Theorem 1.3}
By Corollary \ref{T} we have
\begin{equation*}
|C(4N)|=2^s(2\sum\limits_{\Nodd\mid N}T_{\Nodd,N/\Nodd}+\sum\limits_{\Nodd\mid N}T_{\Nodd,2N/\Nodd}+\sum\limits_{\Neven\mid N}T_{2\Neven,N/\Neven}).
\end{equation*}
Li, Skoruppa and the second author \cite[Corollary 1.2]{LSZ22} proved the following type number formula for Eichler order,
\begin{align*}
T_{N,F}&=2^{-e(NF)-1}\sum_{n\mid NF}\sum_{\substack{n\mid r\\r^2\leq 4n}}H^{(N,F)}(4n-r^2)\\
& = 2^{-e(NF)-1}(H^{(N,F)}(4)+2H^{(N,F)}(3)+2H^{(N,F)}(0)+\left(H^{(N,F)}(8)+2H^{(N,F)}(4)\right)\left(1-\left(\frac{N}{2}\right)^2\right)\\
&+\left(H^{(N,F)}(12)+2H^{(N,F)}(3)\right)\left(1-\left(\frac{N}{3}\right)^2\right)+\sum_{\substack{n\geq5\\n\mid NF}}H^{(N,F)}(4n)).
\end{align*}
Let
\begin{equation*}
A(n)=2^{-s}\sum_{\Nodd\mid N}H^{(\Nodd,N/\Nodd)}(n)+3\cdot2^{-s-2}\left(\sum_{\Nodd\mid N}H^{(\Nodd,2N/\Nodd)}(n)+\sum_{\Neven\mid N}H^{(2\Neven,N/\Neven)}(n)\right),
\end{equation*}
\begin{equation*}
B(n)=2^{-s}\sum_{\Nodd\mid N}H^{(\Nodd,N/\Nodd)}(n)+2^{-s-2}\left(\sum_{\Nodd\mid N}H^{(\Nodd,2N/\Nodd)}(n)+\sum_{\Neven\mid N}H^{(2\Neven,N/\Neven)}(n)\right),
\end{equation*}
and
\begin{equation*}
C(n)=2^{-s-2}\left(\sum_{\Nodd\mid N}H^{(\Nodd,2N/\Nodd)}(n)+\sum_{\Neven\mid N}H^{(2\Neven,N/\Neven)}(n)\right).
\end{equation*}
If $3\nmid N$, then
\begin{equation*}
|C(4N)|=2^{s}\left(A(4)+2\cdot B(3)+B(0)+C(8)+\sum_{\substack{{d\mid N}\\d\neq1}}B(4d)+\sum_{\substack{{d\mid N}\\d\neq1}}C(8d)\right).
\end{equation*}
If $3\mid N$, then
\begin{align*}
|C(4N)|&=2^{s}\left(A(4)+4\cdot B(3)+B(0)+C(8)+B(12)+\sum_{\substack{{d\mid N}\\d\neq1,d\neq 3}}B(4d)+\sum_{\substack{{d\mid N}\\d\neq1}}C(8d)\right)\\
&=2^{s}\left(2\cdot B(3)+A(4)+2\cdot B(3)+B(0)+C(8)+\sum_{\substack{{d\mid N}\\d\neq1}}B(4d)+\sum_{\substack{{d\mid N}\\d\neq1}}C(8d)\right).
\end{align*}
We have
\begin{equation*}
A(4)=\frac{5}{8}-\frac{1}{4}\left(\frac{-4}{N}\right),
\end{equation*}
\begin{equation*}
B(0)=\frac{N}{6}-\frac{1}{8},2\cdot B(3)=\frac{1}{2}-\frac{1}{6}\left(\frac{-3}{N}\right),B(4d)=\frac{3}{4}H(4d),
\end{equation*}
\begin{equation*}
C(8)=\frac{1}{4},C(8d)=\frac{1}{4}H(8d).
\end{equation*}
Hence
\begin{align*}
|C(4N)|&=2^s(2\sum\limits_{\Nodd\mid N}T_{\Nodd,N/\Nodd}+\sum\limits_{\Nodd\mid N}T_{\Nodd,2N/\Nodd}+\sum\limits_{\Neven\mid N}T_{2\Neven,N/\Neven})\\
&=2^{s}\left(\frac{N}{6}+\frac{5}{4}-\frac{1}{4}\left(\frac{-4}{N}\right)-\frac{1}{6}\left(\frac{-3}{N}\right)+\frac{1}{2}\left(1-\left(\frac{N}{3}\right)^2\right)+\frac{1}{4}\sum_{\substack{{d\mid N}\\d\neq1}}\left(H(4d)+H(8d)\right)\right).
\end{align*}
\subsection{Proof of Theorem 1.2}
These one-to-one correspondences $\lambda_4$ and $\phi_p$ will also give us some connection of the representation numbers.
\begin{prop}\label{R}
Let $\mathcal{O}$ be an Eichler order, $\rho_{\mathcal{O}}(n,r)$ is the number of zeros of $x^2-rx+n$ in $\mathcal{O}_{\mu}$, then we have
\begin{equation*}
R_{f_{S^{0}}}(n)=
\begin{cases}
R_{f_{\mathcal{O}^{0}}}(n/4) & \text{if}\quad n\equiv0 \pmod4,\\
\rho_{\mathcal{O}}(\frac{n+1}{4},-1) & \text{if}\quad n\equiv3 \pmod4,\\
0 & \text{if}\quad n\equiv1,2\pmod4.\\
\end{cases}
\end{equation*}
\end{prop}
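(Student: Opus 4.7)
The plan is to set up an explicit bijection between $S^0$ and $\mathcal{O}/\mathbb{Z}$, compute how the norm transforms under it, and then split by the parity of the trace.

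First I would observe that every element $\sigma \in S = \mathbb{Z} + 2\mathcal{O}$ can be written as $\sigma = a + 2\beta$ with $a \in \mathbb{Z}$ and $\beta \in \mathcal{O}$, and that the condition $\tr(\sigma) = 0$ forces $a = -\tr(\beta)$. Hence every element of $S^0$ has the form $\gamma_\beta := 2\beta - \tr(\beta)$. Using $\mathcal{O} \cap \mathbb{Q} = \mathbb{Z}$, a short check shows $\gamma_\beta = \gamma_{\beta'}$ iff $\beta - \beta' \in \mathbb{Z}$, so $\beta \mapsto \gamma_\beta$ descends to a bijection $\mathcal{O}/\mathbb{Z} \xrightarrow{\sim} S^0$. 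A direct expansion of $\gamma_\beta\,\overline{\gamma_\beta}$ gives $\n(\gamma_\beta) = 4\n(\beta) - \tr(\beta)^2$, so $R_{f_{S^0}}(n)$ counts classes $[\beta] \in \mathcal{O}/\mathbb{Z}$ satisfying $4\n(\beta) - \tr(\beta)^2 = n$.

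Next, because $\tr(\beta + k) = \tr(\beta) + 2k$ for $k \in \mathbb{Z}$, the parity of $\tr(\beta)$ is a class invariant, and each class admits a unique representative with trace in $\{0, -1\}$. This immediately forces $n \equiv 0$ or $-1 \pmod 4$, handling the vanishing case $n \equiv 1, 2 \pmod 4$. For $n \equiv 0 \pmod 4$ I choose the representative with $\tr(\beta) = 0$, so $\beta \in \mathcal{O}^0 = \mathbb{Z}\alpha_1 + \mathbb{Z}\alpha_2 + \mathbb{Z}(2\alpha_3 - 1)$ satisfies $\n(\beta) = n/4$; the count is $R_{f_{\mathcal{O}^0}}(n/4)$ by the very definition of the norm form attached to $\mathcal{O}^0$. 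For $n \equiv 3 \pmod 4$ I choose the representative with $\tr(\beta) = -1$; then $\beta$ is a zero of $x^2 + x + (n+1)/4$ in $\mathcal{O}$, giving the count $\rho_{\mathcal{O}}((n+1)/4, -1)$.

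The computations involved (the lattice identification, the norm expansion, and the parity argument) are all elementary, so there is no serious obstacle. The only point requiring mild care is verifying that $\beta \mapsto \gamma_\beta$ really has kernel exactly $\mathbb{Z}$, which rests on the fact that an order over $\mathbb{Z}$ in a quaternion algebra meets the center $\mathbb{Q}$ only in $\mathbb{Z}$; once that is in place, the three cases of the proposition drop out by choosing the canonical representative in each trace-parity class.
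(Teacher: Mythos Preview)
Your proof is correct and is essentially the same argument as the paper's: both split according to the parity of the trace (which in the paper's explicit basis is the parity of the third coordinate $z$ of $x(2\alpha_1)+y(2\alpha_2)+z(2\alpha_3-1)$), identify the even case with $\mathcal{O}^0$, and identify the odd case with trace-$(-1)$ elements of $\mathcal{O}$. Your coordinate-free packaging via the bijection $\mathcal{O}/\mathbb{Z}\xrightarrow{\sim}S^0$, $\beta\mapsto 2\beta-\tr(\beta)$, together with the identity $\n(2\beta-\tr(\beta))=4\n(\beta)-\tr(\beta)^2$, is a bit cleaner than the paper's explicit basis manipulation, but the mathematical content is identical.
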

\begin{proof}
Let
\begin{align*}
f_{S^{0}} & =\n(x(2\alpha_1)+y(2\alpha_2)+z(2\alpha_3-1))\\
& =4\n(\alpha_1)x^2+4\n(\alpha_2)y^2+(4\n(\alpha_3)-1)z^2+4\tr(\alpha_2\overline{\alpha_3})yz+4\tr(\alpha_1\overline{\alpha_3})xz+4\tr(\alpha_1\overline{\alpha_2})xy,
\end{align*}
and
\begin{align*}
f_{\mathcal{O}^{0}} & =\n(x\alpha_1+y\alpha_2+z(2\alpha_3-1))\\
& =\n(\alpha_1)x^2+\n(\alpha_2)y^2+(4\n(\alpha_3)-1)z^2+2\tr(\alpha_2\overline{\alpha_3})yz+2\tr(\alpha_1\overline{\alpha_3})xz+\tr(\alpha_1\overline{\alpha_2})xy.
\end{align*}
We have $f_{S^{0}}\equiv-z^2\pmod4$. Hence $R_{f_{S^{0}}}(n)=0$ if $n\equiv1,2\pmod4$.\\ 
If $n\equiv0 \pmod4$, then we have $2\mid z$, and 
\begin{equation*}
n=\n(x(2\alpha_1)+y(2\alpha_2)+z(2\alpha_3-1))=4\n(x\alpha_1+y\alpha_2+\frac{z}{2}(2\alpha_3-1)).
\end{equation*}
We have $x\alpha_1+y\alpha_2+\frac{z}{2}(2\alpha_3-1)\in\mathcal{O}^0$. Conversely, if
\begin{equation*}
\n(x\alpha_1+y\alpha_2+z(2\alpha_3-1))=n/4,
\end{equation*}
then
\begin{equation*}
\n(2x\alpha_1+2y\alpha_2+2z(2\alpha_3-1))=n,
\end{equation*}
and $x(2\alpha_1)+y(2\alpha_2)+2z(2\alpha_3-1)\in S^0$. Hence $R_{f_{S^{0}}}(n)=R_{f_{\mathcal{O}^{0}}}(n/4)$.\\ 
If $n\equiv3 \pmod4$, then $2\nmid z$. Since $\tr(\alpha\overline{\beta})=\n(\alpha+\beta)-\n(\alpha)-\n(\beta)$, we have
\begin{equation*}
\n(x(2\alpha_1)+y(2\alpha_2)+z(2\alpha_3-1))=\n(2x\alpha_1+2y\alpha_2+2z\alpha_3-z+1-1)=\n(2x\alpha_1+2y\alpha_2+2z\alpha_3-z-1)-1.
\end{equation*}
Then $x\alpha_1+y\alpha_2+z\alpha_3-\frac{z+1}{2}\in\mathcal{O}$ and we have
\begin{equation*}
\n(x\alpha_1+y\alpha_2+z\alpha_3-\frac{z+1}{2})=\frac{n+1}{4}.
\end{equation*}
and 
\begin{equation*}
\tr(x\alpha_1+y\alpha_2+z\alpha_3-\frac{z+1}{2})=-1.
\end{equation*}
Conversely, if
\begin{equation*}
\n(x\alpha_1+y\alpha_2+z\alpha_3+t)=\frac{n+1}{4}.
\end{equation*}
and 
\begin{equation*}
\tr(x\alpha_1+y\alpha_2+z\alpha_3+t)=-1.
\end{equation*}
We have $\tr(z\alpha_3)+2t=2t+z=-1$, hence $t=-\frac{z+1}{2}$, and
\begin{equation*}
n=4\n(x\alpha_1+y\alpha_2+z\alpha_3-\frac{z+1}{2})-1=\n(x(2\alpha_1)+y(2\alpha_2)+z(2\alpha_3-1)).
\end{equation*}
Hence $R_{f_{S^{0}}}(n)=\rho_{\mathcal{O}}(\frac{n+1}{4},-1).$
\end{proof}
\begin{prop}\label{m2}
Let $f\in C(N',p^2d')$ where $p$ is an odd prime, $p\parallel N'$ and $p\nmid d'$, then we have
\begin{equation*}
R_{f}(pn)=R_{\phi_p(f)}(n).
\end{equation*}
\end{prop}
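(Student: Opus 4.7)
The plan is to reduce to an explicit coordinate form via Lemma~\ref{Leh3}, compute $\phi_p(f)$ in those coordinates, and exhibit an explicit bijection between the two sets of representations.

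First I would invoke Lemma~\ref{Leh3} with $i=1$ (since $p\parallel N'$) and $g=1$, $h=2$ (since $p^2\parallel d$) to choose a representative
\begin{equation*}
f=(pa,\,pb,\,c,\,pr,\,ps,\,pt),\qquad p\nmid ac,
\end{equation*}
so that, writing $f(x,y,z)=pax^2+pby^2+cz^2+pryz+psxz+ptxy$, the definition of $\phi_p$ recalled before Proposition~\ref{m1} gives
\begin{equation*}
\phi_p(f)=(a,\,b,\,pc,\,pr,\,ps,\,t),
\end{equation*}
that is, $\phi_p(f)(x,y,z)=ax^2+by^2+pcz^2+pryz+psxz+txy$.

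Next I would analyze the congruence obstruction. In any representation $f(x,y,z)=pn$, every term except $cz^2$ is a multiple of $p$, hence $cz^2\equiv 0\pmod p$; since $p\nmid c$, this forces $p\mid z$. Write $z=pz'$ and substitute:
\begin{equation*}
f(x,y,pz')=pax^2+pby^2+cp^2z'^2+p^2ryz'+p^2sxz'+ptxy=p\bigl(ax^2+by^2+pcz'^2+pryz'+psxz'+txy\bigr).
\end{equation*}
Dividing by $p$ yields exactly $\phi_p(f)(x,y,z')=n$. Conversely, any representation $(x,y,z')$ of $n$ by $\phi_p(f)$ produces the representation $(x,y,pz')$ of $pn$ by $f$ by running the same computation backwards.

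Thus the assignment $(x,y,z)\mapsto (x,y,z/p)$ is a well-defined bijection between the representations of $pn$ by $f$ and those of $n$ by $\phi_p(f)$, and the proposition follows. There is no real obstacle: the only thing to check carefully is the divisibility $p\mid z$, which is forced by the shape of $f$ guaranteed by Lemma~\ref{Leh3}, and the fact that the map does not depend on the chosen representative in its equivalence class, which follows from the corresponding invariance statement for $\phi_p$ already established in Proposition~\ref{m1}.
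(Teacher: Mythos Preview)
Your proof is correct and follows essentially the same approach as the paper: choose the normalized representative $f=(pa,pb,c,pr,ps,pt)$ via Lemma~\ref{Leh3}, read off $\phi_p(f)=(a,b,pc,pr,ps,t)$, and use the congruence $cz^2\equiv 0\pmod p$ to force $p\mid z$, then match representations via $(x,y,z)\leftrightarrow(x,y,z/p)$. Your converse is in fact cleaner than the paper's, which unnecessarily re-normalizes $\phi_p(f)$ to a different coordinate form and runs the argument through $\phi_p^{-1}$ rather than simply inverting the substitution.
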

\begin{proof}
Let $f \in C(N',p^2d')$. By Lemma \ref{Leh3}, assume that
\begin{center}
$f=(pa,pb,c,pr,ps,pt)$,
\end{center}
with $a, b, c, r, s$ and t integers, $p\nmid ac$. It follows that
\begin{center}
$\phi_p(f)=(a,b,pc,pr,ps,t).$
\end{center}
Suppose that there are integers $x,y$ and $z$ satisfying the equation
\begin{center}
$f(x,y,z)=pax^2+pby^2+cz^2+pryz+psxz+ptxy=pn$.
\end{center}
Since all the terms are divisible by $p$ expect $cz^2$, and $p\nmid c$, we have $p\mid z$. Let
$(x',y',z')=(x,y,z/p)$, then
\begin{align*}
pn=f(x,y,z) &=pax^2+pby^2+cz^2+pryz+psxz+ptxy\\
&=pax'^2+pby'^2+c(pz')^2+pry'(pz')+psx'(pz')+ptx'y'\\
&=p(ax'^2+by'^2+pcz'^2+pry'z'+psx'z'+tx'y')\\
&=p\phi_p(f)(x',y',z').
\end{align*}
We have $\phi_p(f)(x',y',z')=n$. It follows that if there exists integers $(x,y,z)$ such that $f(x,y,z)=pn$, then $\phi_p(f)(x,y,z/p)=n$.

Conversely, 
\begin{center}
$\phi_p(f)=(a,b,pc,pr,ps,t)\sim(pa,b,c,r,ps,pt)$.
\end{center}
Since we only take a single representative from each equivalence class of form, without loss of generality, let
\begin{center}
$\phi_p(f)=(pa,b,c,r,ps,pt)$.
\end{center}
Then we have
\begin{center}
$\phi_p^{-1}(\phi_p(f))=(a,pb,pc,pr,ps,pt)\sim(pa,pb,c,pr,ps,pt)=f.$
\end{center}
Suppose that there are some integers $x,y$ and $z$ such that
\begin{center}
$\phi_p(f)(x,y,z)=pax^2+by^2+cz^2+ryz+psxz+ptxy=n$,
\end{center}
Let $(x',y',z')=(px,y,z)$, then
\begin{align*}
pn=p\phi_p(f)(x,y,z)&=p^2ax^2+pby^2+pcz^2+pryz+p^2sxz+p^2txy\\
&=ax'^2+pby'^2+pcz'^2+pry'z'+psx'z'+ptx'y'\\
&=\phi_p^{-1}(\phi_p(f))(x',y',z').
\end{align*}
Hence $R_{f}(pn)=R_{\phi_p(f)}(n)$.
\end{proof}
Now we will give proof of Theorem 1.2. By Corollary \ref{M}, we have
\begin{equation*}
\sum\limits_{f\in G_{4N,N^2,\Nodd}}\frac{R_f(n)}{|\Aut(f)|}=2^{-s-1}H^{(\Nodd,N/\Nodd)}(4n).
\end{equation*}
By Proposition \ref{m1}, and Proposition \ref{m2}, we have
\begin{equation*}
\sum\limits_{f\in G_{4N,N^2/N_r,\Nodd}}\frac{R_f(n)}{|\Aut(f)|}=2^{-s-1}H^{(\Nodd,N/\Nodd)}(4N_rn).
\end{equation*}
It is the same way to check that
\begin{equation*}
\sum\limits_{f\in G_{4N,4N^2,\Nodd}}\frac{R_f(n)}{|\Aut(f)|}=2^{-s-2}H^{(\Nodd,N/\Nodd)}(4n),
\end{equation*}
\begin{equation*}
\sum\limits_{f\in G_{4N,4N^2/N_r,\Nodd}}\frac{R_f(n)}{|\Aut(f)|}=2^{-s-2}H^{(\Nodd,2N/\Nodd)}(4N_rn),
\end{equation*}
\begin{equation*}
\sum\limits_{f\in G_{4N,4N^2,2\Neven}}\frac{R_f(n)}{|\Aut(f)|}=2^{-s-2}H^{(2\Neven,N/\Neven)}(4n),
\end{equation*}
and
\begin{equation*}
\sum\limits_{f\in G_{4N,4N^2/N_r,2\Neven}}\frac{R_f(n)}{|\Aut(f)|}=2^{-s-2}H^{(2\Neven,N/\Neven)}(4N_rn).
\end{equation*}
By Proposition \ref{R}, we have
\begin{equation*}
\sum\limits_{f\in G_{4N,16N^2/N_r,\Nodd}}\frac{R_f(n)}{|\Aut(f)|}=2^{-s-1}H^{(\Nodd,N/\Nodd)}(N_rn).
\end{equation*}

\section{Some applications}
In this section, we will give three applications of our main results. Construct a basis of Eisenstein space of modular forms of weight 3/2, and give new proofs of  Berkovich and Jagy’s genus identity and Du's identity.

 \subsection{Modular forms of weight 3/2}
 Let $\mathscr{E}(4N, \frac{3}{2}, \chi_l)$ be the Eisenstein series space of modular forms of weight $3/2$, level $4N$ and character $\chi_l$, which is the
 orthogonal complement of the subspace of cusp forms  of weight $3/2$, level $4N$ and the character $\chi_l$ with respect to Petersson inner product. Pei  \cite{WP12} gave an explicit basis of the Eisenstein space $\mathscr{E}(4N, \frac{3}{2}, \chi_l)$.
 
 \begin{theorem}\cite[Theorem 7.7]{WP12}   For any prime $p$, let $h_p(m)$ be the non-negative integer such that $p^{h_p(m)}\parallel m$ and $h'_p(m)=\frac{m}{p^{h_p(m)}}$ the $p'$-part of $m$. Define
 	\begin{equation*}
 		\alpha(m)=
 		\begin{cases}
 			3\cdot 2^{-\frac{1+h_2(m)}{2}} & \text{if} \quad h_2(m) \quad \text{is odd},\\
 			3\cdot 2^{-1-\frac{h_2(m)}{2}} & \text{if} \quad h_2(m) \quad \text{is even and} \quad h_2'(m)\equiv1\pmod4,\\
 			2^{-\frac{h_2(m)}{2}} & \text{if} \quad h_2(m) \quad \text{is even and} \quad h_2'(m)\equiv3\pmod8,\\
 			0 & \text{if} \quad h_2(m) \quad \text{is even and} \quad  h_2'(m)\equiv7\pmod8;
 		\end{cases}
 	\end{equation*}
 	\begin{equation*}
 		\lambda(n,4D)=\lambda_3(n,4D)=L_{4D}(2,\mbox{id.})^{-1}L_{4D}(1,\chi_{-n})\beta_3(n,0,\chi_D,4D).
 	\end{equation*}

 	and
 	\begin{equation*}
 		A(p,m)=
 		\begin{cases}
 			p^{-1}-(1+p)p^{-\frac{3+h_p(m)}{2}} & \text{if} \quad h_p(m) \quad \text{is odd},\\
 			p^{-1}-2p^{-1-\frac{h_p(m)}{2}} & \text{if} \quad h_2(m) \quad \text{is even and} \quad \left(\frac{-h'_p(m)}{p}\right)=-1,\\
 			p^{-1} & \text{if} \quad h_2(m) \quad \text{is even and} \quad \left(\frac{-h'_p(m)}{p}\right)=1;
 		\end{cases}
 	\end{equation*}
 	where $p$ is an odd prime. Let
 	\begin{equation*}
 		g(\chi_l,m,4D)=2\pi l^{1/2}\sum^\infty_{n=1}\lambda(ln,4D)\prod_{p\mid m}(A(p,ln)-p^{-1})n^{1/2}e^{2\pi inz},
 	\end{equation*}
 	\begin{equation*}
 		g(\chi_l,4m,4D)=2\pi l^{1/2}\sum^\infty_{n=1}\lambda(ln,4D)\alpha(ln)\prod_{p\mid m}(A(p,ln)-p^{-1})n^{1/2}e^{2\pi inz},
 	\end{equation*}
 	and 
 	\begin{equation*}
 		g(\chi_l,4D,4D)=1+2\pi l^{1/2}\sum^\infty_{n=1}\lambda(ln,4D)\alpha(ln)\prod_{p\mid D}(A(p,ln)-p^{-1})n^{1/2}e^{2\pi inz},
 	\end{equation*}
 	where $D$ is an odd squarefree integer, $m,l$ and $\beta$ positive divisor of $D$. Then $g(\chi_l,m,4D) (m\mid D)$ and $g(\chi_l,4m,4D)(m\mid D, m\neq1)$ form a basis of $\mathscr{E}(4N, \frac{3}{2}, \chi_l)$.
 \end{theorem}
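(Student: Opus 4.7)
The plan is to derive Pei's basis by realizing the Eisenstein space $\mathscr{E}(4N,\tfrac{3}{2},\chi_l)$ through genus theta series of ternary quadratic forms of level $4N$, and then matching Fourier coefficients with Pei's explicit formulas. First, to each primitive positive definite form $f$ of level $4N$ I would attach the weighted theta series
\begin{equation*}
\theta_f(z) = \sum_{X \in \mathbb{Z}^3} e^{2\pi i f(X) z},
\end{equation*}
which is a modular form of weight $3/2$ and level $4N$ with a suitable character. Summing $\theta_f/|\Aut(f)|$ over the classes of a genus $G$ produces a \emph{genus theta series} $\theta_G$, which by the Siegel--Weil formula lies in the Eisenstein subspace. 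By Theorem~\ref{theo:cla} there are exactly $2^{2s+1}$ such genera, and by Theorem~\ref{theo:rep} the Fourier coefficients of each $\theta_G$ are explicit linear expressions in the modified Hurwitz class numbers $H^{(N_1,N_2)}(D)$.

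Next I would count dimensions and establish linear independence. Using Pei's dimension formula for $\mathscr{E}(4N,\tfrac{3}{2},\chi_l)$ one checks that the total number of independent $\theta_G$ (after twisting by the Kronecker characters $\chi_l$ to separate components) matches the dimension of the Eisenstein space. Linear independence of the $\theta_G$ follows from the fact that distinct genera are distinguished by their local invariants and hence by the support of the associated modified Hurwitz class numbers (via the isotropy data in Theorem~\ref{theo:cla}). This would exhibit $\{\theta_G\}$ as one basis of $\mathscr{E}(4N,\tfrac{3}{2},\chi_l)$.

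To recover Pei's specific generators $g(\chi_l,m,4D)$ and $g(\chi_l,4m,4D)$, I would expand $H^{(N_1,N_2)}(D)$ through its definition~\eqref{Hurwitz-class-number}, recognizing the Dirichlet $L$-value factor as $L_{4D}(1,\chi_{-n})/L_{4D}(2,\mathrm{id.})$ (which is essentially $\lambda(n,4D)$), and identifying the local Euler factors at odd primes $p \mid N_1N_2$ with $A(p,ln)-p^{-1}$, and the $2$-adic factor with $\alpha(ln)$. The appropriate linear combinations of the $\theta_G$, indexed over divisors $\Nodd, \Neven, N_r$ of $N$, then reproduce the $g(\chi_l,m,4D)$ and $g(\chi_l,4m,4D)$. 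A change-of-basis argument, triangular with respect to the divisor lattice of $N$ using Möbius inversion, converts the genus-indexed basis into Pei's divisor-indexed basis.

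The main obstacle will be the Euler-factor matching at step three: the modified Hurwitz class number packages its local data via the quantities $f_p$ and the Kronecker symbol $\bigl(\tfrac{-D/f^2_{N_1,N_2}}{p}\bigr)$, whereas Pei's $A(p,m)$ is stratified by the parity of $h_p(m)$ and the residue of $h_p'(m)$. Reconciling these two presentations requires a careful case analysis at each prime dividing $N$ (with the $p=2$ case, governed by $\alpha$, being the most delicate because of the three distinct genus types $G_{4N,\cdot,\Nodd}$ and $G_{4N,\cdot,2\Neven}$ contributing different $2$-adic behavior). Once this local matching is established prime by prime, the global identification of the two bases follows formally.
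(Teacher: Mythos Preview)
The paper does not prove this theorem at all: it is quoted verbatim as a known result of Pei, cited as \cite[Theorem 7.7]{WP12}, and serves only as background. Immediately after stating it, the paper proves its own Theorem~\ref{theo:mod-the}, which gives a \emph{different} basis $\{\theta_{d,2N/d}(lz)\}_{d\in I}$ of $\mathscr{E}(4N,\tfrac{3}{2},\chi_l)$ built directly from the modified Hurwitz class numbers. So there is no ``paper's own proof'' to compare against.

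Your proposal is therefore aiming at something the paper never attempts. The first two steps you outline --- constructing genus theta series, appealing to Siegel--Weil to land in the Eisenstein space, counting dimensions, and verifying linear independence via the support of $H^{(N_1,N_2)}$ --- are essentially the ingredients the paper uses to prove Theorem~\ref{theo:mod-the}, not Pei's theorem. Your third step, matching the Euler factors in $H^{(N_1,N_2)}(D)$ against Pei's $\lambda(n,4D)$, $A(p,m)$, and $\alpha(m)$ prime by prime, is new work that neither the paper nor (as far as one can tell) Pei carries out; Pei's original proof proceeds by direct construction of Eisenstein series for $\Gamma_0(4D)$ rather than via ternary forms. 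If you wanted to pursue this route you would in effect be giving an alternative proof of Pei's result, and the obstacle you flag --- reconciling the $f_p$-stratification of $H^{(N_1,N_2)}$ with the $h_p(m)$-stratification of $A(p,m)$, especially at $p=2$ --- is real and would require substantial case analysis absent from both sources.
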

 
 Denote the generating functions of average representation number of ternary quadratic forms over each genus as following:
 \begin{equation}\label{def:thetaNo}
 	\theta_{G_{4N,N^2,\Nodd}}(z):=2^{s+1}\sum\limits_{n=0}^{\infty}\left(\sum\limits_{f\in G_{4N,N^2,\Nodd}}\frac{R_f(n)}{|\Aut(f)|}\right)q^n=\sum\limits_{n=0}^{\infty}H^{(\Nodd,N/\Nodd)}(4n)q^n,
 \end{equation}
 \begin{equation}\label{def:theta4No}
 	\theta_{G_{4N,4N^2,\Nodd}}(z):=2^{s+2}\sum\limits_{n=0}^{\infty}\left(\sum\limits_{f\in G_{4N,4N^2,\Nodd}}\frac{R_f(n)}{|\Aut(f)|}\right)q^n=\sum\limits_{n=0}^{\infty}H^{(\Nodd,2N/\Nodd)}(4n)q^n,
 \end{equation}
 and
 \begin{equation}\label{def:thetaNe}
 	\theta_{G_{4N,4N^2,2\Neven}}(z):=2^{s+2}\sum\limits_{n=0}^{\infty}\left(\sum\limits_{f\in G_{4N,4N^2,2\Neven}}\frac{R_f(n)}{|\Aut(f)|}\right)q^n=\sum\limits_{n=0}^{\infty}H^{(2\Neven,N/\Neven)}(4n)q^n.
 \end{equation}
 It is well-known that these functions are in the space of Eisenstein series of weight $3/2$, $\mathscr{E}(4N, \frac{3}{2}, \text{id})$.

 \begin{theorem}\label{theo:mod-the}
 	Let $I$ denote the set of all positive divisor of $ 2N $ except 1, $d\in I$ . Set 
 	\begin{equation}\label{def:thetaNd}
 		\theta_{d,2N/d}(z):=\sum\limits_{n=0}^{\infty}H^{(d,2N/d)}(4n)q^n.
 	\end{equation} 
 	Let $l$ be the divisor of $N$, and $\chi_l$ denote the primitive characters such that $\chi_l(n)=\left(\frac{l}{n}\right)$ for $(n,4l)=1$. 
 	Then the set  $\{\theta_{d,2N/d}(lz)\}_I$  is a basis of space of Eisenstein series of weight $3/2$ with character $\chi_l$,  $\mathscr{E}(4N, \frac{3}{2}, \chi_l)$.
 \end{theorem}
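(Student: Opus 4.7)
The plan is to combine Theorem~\ref{theo:rep} with the Siegel--Weil interpretation of genus theta series as Eisenstein series, and then match cardinalities against Pei's explicit basis from Theorem~7.7 of \cite{WP12}.

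First, I would observe that for every $d\in I$, writing $d$ either as $\Nodd$ (a divisor of $N$ with an odd number of prime factors) or as $2\Neven$ (an even divisor of $2N$), the definition \eqref{def:thetaNd} together with Theorem~\ref{theo:rep} (specialized to $N_r=1$) identifies
\[
\theta_{d,2N/d}(z)\;=\;2^{s+2}\sum_{n\ge 0}\Biggl(\sum_{f\in G}\frac{R_f(n)}{|\Aut(f)|}\Biggr)q^n,
\]
where $G$ is the genus $G_{4N,4N^{2},\Nodd}$ or $G_{4N,4N^{2},2\Neven}$ accordingly. By Siegel's theorem the weighted genus theta series on the right lies in the Eisenstein subspace $\mathscr{E}(4N,\tfrac{3}{2},\chi)$, with the quadratic character $\chi$ attached to the common discriminant $4N^{2}$ via the standard Shimura recipe for ternary theta series. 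In particular every $\theta_{d,2N/d}(z)$ is a weight $3/2$ Eisenstein form on $\Gamma_{0}(4N)$.

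Next, I would apply the substitution operator $V_{l}\colon f(z)\mapsto f(lz)$ for $l\mid N$. A standard computation in the Kohnen/Shimura calculus shows that $V_{l}$ carries $\mathscr{E}(4N,\tfrac{3}{2},\chi)$ into $\mathscr{E}(4N,\tfrac{3}{2},\chi\cdot\chi_{l})$, the level staying at $4N$ because $l\mid N$. Tuning the reference character of Step~1 (noting that $4N^{2}$ is a square away from its $2$-part) yields $\theta_{d,2N/d}(lz)\in\mathscr{E}(4N,\tfrac{3}{2},\chi_{l})$ for every $d\in I$. The cardinality of our set is $|I|=d(2N)-1=2^{s+1}-1$, which equals the dimension of $\mathscr{E}(4N,\tfrac{3}{2},\chi_{l})$ obtained from Pei's basis $\{g(\chi_{l},m,4N):m\mid N\}\cup\{g(\chi_{l},4m,4N):m\mid N,\,m\ne 1\}$.

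Finally I would establish linear independence. The cleanest route is to expand each $\theta_{d,2N/d}(lz)$ in Pei's basis via its Fourier coefficients and to show the change-of-basis matrix is invertible: for each prime $p\mid 2N$, the Euler factor of $H^{(d,2N/d)}(4ln)$ at $p$ detects whether $p$ appears in the first or the second superscript, so by varying $n$ across subsets of prime divisors of $2N$ one can separate the basis elements, producing an upper-triangular (with respect to the divisor lattice of $2N$) change-of-basis matrix with nonzero diagonal. The main obstacle is the character bookkeeping in the previous paragraph: one must verify, using Shimura's formula for the character of a ternary theta series in terms of $\det M_{f}$ and tracking how $V_{l}$ twists the character in half-integer weight, that the resulting character is precisely $\chi_{l}$ and not an unwanted twist. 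Once the character matches and the dimensions agree, linear independence forces $\{\theta_{d,2N/d}(lz)\}_{I}$ to be a basis.
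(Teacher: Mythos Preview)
Your first step contains a genuine gap. You claim that every $d\in I$ can be written either as $\Nodd$ or as $2\Neven$, so that $\theta_{d,2N/d}$ is always one of the genus theta series supplied by Theorem~\ref{theo:rep}. This is not true: the divisors of $2N$ of the form $\Nodd$ or $2\Neven$ are exactly those with an \emph{odd} number of prime factors, and there are only $2^{s}$ of them, whereas $|I|=2^{s+1}-1$. The remaining $2^{s}-1$ elements of $I$ (namely the $\Neven\neq 1$ and the $2\Nodd$) have an even number of prime factors, and for such $d$ there is no genus of level $4N$ whose weighted theta series has $n$th coefficient $H^{(d,2N/d)}(4n)$. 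So for these $d$ your Siegel--Weil argument does not apply and you have not shown $\theta_{d,2N/d}\in\mathscr{E}(4N,\tfrac32,\chi_l)$.

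The paper fills this gap with a recursion coming from the identity
\[
H^{(N_1,N_2)}(D)=\sum_{n\mid N_2}\mu(n)\,2^{e(N_2/n)}H^{(N_1n,1)}(D),
\]
which yields $\theta_{d,2N/d}=2\,\theta_{d/p,\,2N/d}-\theta_{d/p,\,2Np/d}$ for any prime $p\mid d$. When $d$ has an even number of prime factors, $d/p$ has an odd number, so the second summand is a genuine genus theta series of level $4N$, while the first is a genus theta series of the \emph{lower} level $4N/p$ and hence also lies in $\mathscr{E}(4N,\tfrac32,\mathrm{id})$ by inclusion. For linear independence the paper does something close in spirit to what you sketch, but concretely: for each $d$ one uses the Chinese remainder theorem to produce a discriminant $-D_d$ with $\bigl(\tfrac{-D_d}{p}\bigr)=-1$ for $p\mid d$ and $\bigl(\tfrac{-D_d}{p}\bigr)=1$ for $p\mid 2N/d$, which makes $H^{(d',2N/d')}(4D_d)$ vanish unless $d'\mid d$; an induction on the number of prime factors of $d$ then forces all coefficients in a linear relation to vanish.
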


 \begin{proof}
 	Since $N$ is squarefree with $s$ distinct odd prime factors, it is well-known that $\text{dim} \mathscr{E}(4N, \frac{3}{2}, \chi_l)=2^{s+1}-1$. To give a basis of $ \mathscr{E}(4N, \frac{3}{2}, \chi_l)$, it is suffices to find $2^{s+1}-1$ linear independent elements in the space $\mathscr{E}(4N, \frac{3}{2}, \chi_l)$.
 	  
 	We only prove the  case $\chi_l=\text{id}$, that is $l=1$, and the others can be proved similarly.
 	
 	First, we show that  $\theta_{d,2N/d}(z) \in \mathscr{E}(4N, \frac{3}{2}, \text{id}) $ for all $d\in I$.
 	
 	When the divisor $d$ of $2N$ has only odd number of prime factors, 
 	\begin{equation*}
 		\theta_{d,2N/d}(z)=\theta_{G_{4N,4N^2,d}}(z).
 	\end{equation*}
 		Hence $\theta_{d,2N/d}(z)\in\mathscr{E}(4N, \frac{3}{2}, \text{id})$.
 		
 		When the divisor $d$ of $2N$ has even number  of prime factors, 
 in terms of 
 	\begin{equation}\label{Hurwitz-class-num-def2}
 		H^{(N_1,N_2)}(D)=\sum\limits_{n\mid N_2}\mu(n)2^{e(N_2/n)}H^{(N_1n,1)}(D),
 	\end{equation}
 	where $-D$ is a negative discriminant\cite[p.379]{LSZ22},  we have
 	\begin{equation*}
 		H^{(d/p,2Np/d)}(D)=-H^{(d,2N/d)}(D)+2H^{(d/p,2N/d)}(D),
 	\end{equation*}
 	where $p$ is some  prime divisor of $d$. It follows that
 	\begin{equation*}
 		\theta_{d,2N/d}(z)=2\theta_{d/p,2N/d}(z)-\theta_{d/p,2Np/d}(z),
 	\end{equation*}
 	\begin{equation*}
 		\theta_{d/p,2Np/d}(z)\in\mathscr{E}(4N, \frac{3}{2}, \text{id}),
 	\end{equation*}
 	and
 	\begin{equation*}
 		\theta_{d/p,2N/d}(z)\in\mathscr{E}(4N/p, \frac{3}{2}, \text{id})\subset\mathscr{E}(4N, \frac{3}{2}, \text{id}).
 	\end{equation*}
 	Hence $\theta_{d,2N/d}(z)\in\mathscr{E}(4N, \frac{3}{2}, \text{id})$. 
 	
 	Now we show that theta series $\theta_{d,2N/d}(z), d\in I$,  are linearly independent. Suppose  
 	\begin{equation*}
 		\sum\limits_{ d \in I}c(d)\theta_{d,2N/d}(z)=0.
 	\end{equation*}
 	  We show that coefficients $c(d), d\in I$ vanish by induction. Assume that the divisor $d\in I$ is prime. By Chinese remainder theorem, choose a discriminant $-D_d<0$ such that $\left(\frac{-D_d}{d}\right)=-1$ and $\left(\frac{-D_d}{p}\right)=1$, where $p\mid 2N/d$. Then we have 
 	$H^{(d,2N/d)}(4D_d)\neq0, $ and $ H^{(d',2N/d')}(4D_d)=0	$ for $d'\in I, d'\neq d$. 
  It follows that if $d$ is prime, $c(d)=0$.
 	Similarly, assume that $d$ is a product of two primes, $d=pq$, where $p,q\mid 2N$ are prime. If $p=2$ or $q=2$, without loss of generality we assume $q=2$. Choose a discriminant $-D_{2p}<0$ such that $\left(\frac{-D_{2p}}{p}\right)=\left(\frac{-D_{2p}}{2}\right)=-1$ and $\left(\frac{-D_{pq}}{p'}\right)=1$, where $p'\mid 2N/(pq)$.  Then we have 
 	\begin{equation*}
 		H^{(p,2N/p)}(4D_{pq})\neq0, H^{(2p,N/p)}(4D_{pq})\neq0,
 	\end{equation*}
 	and the others equal zero. It follows that $c(2p)=0$. If $2\nmid pq$, choose a discriminant $-D_{pq}<0$ such that $\left(\frac{-D_{pq}}{p}\right)=\left(\frac{-D_{pq}}{q}\right)=-1$ and $\left(\frac{-D_{pq}}{p'}\right)=1$, where $p'\mid 2N/(pq)$.  Then we have 
 	\begin{equation*}
 		H^{(pq,2N/(pq))}(4D_{pq})\neq0,
 	\end{equation*}
 	and the others equal zero. It follows that $c(pq)=0$.  
 	
 	By induction, for all $d\in I$,  coefficients $c(d)$ vanish. 
 	So we prove that  the set  $\{\theta_{d,2N/d}(z)\}_I$  is a basis of $\mathscr{E}(4N, \frac{3}{2}, \text{id})$.
 	
 \end{proof}

\subsection{Berkovich and Jagy’s genus identity}
 
 Berkovich and Jagy \cite{BJa12} established the following interesting identity connecting the weighted sum of the representation numbers and the sum of three squares $r_3(n)$:
\begin{equation}\label{eq:Berk-Jagy}
r_3(p^2n)-pr_3(n)=48\sum\limits_{f\in TG_{1,p}}\frac{R_{f}(n)}{|\Aut(f)|}-96\sum\limits_{f\in TG_{2,p}}\frac{R_{f}(n)}{|\Aut(f)|},
\end{equation}
where a sum over forms in a genus should be understood to be the finite sum resulting from taking a single representative from each equivalence class of forms. 
 
 We now give a new proof of the identity (\ref{eq:Berk-Jagy}).
By the level and discriminant of ternary quadratic forms,  it becomes evident  that $TG_{1,p}$ (resp. $TG_{2,p}$) coincides $G_{4p,p^2,p}$ (resp. $G_{4p,16p^2,p}$). 
In terms of the results of Theorem \ref{theo:rep}, we have
 \begin{equation}\label{eq:BJ1}
 	 \sum\limits_{f\in TG_{1,p}}\frac{R_{f}(n)}{|\Aut(f)|}=\sum\limits_{f\in G_{4p,p^2,p}}\frac{R_{f}(n)}{|\Aut(f)|}=\frac{1}{4}H^{(p,1)}(4n);
 \end{equation}
and
\begin{equation}\label{eq:BJ2}
	\sum\limits_{f\in TG_{2,p}}\frac{R_{f}(n)}{|\Aut(f)|}=\sum\limits_{f\in G_{4p,16p^2,p}}\frac{R_{f}(n)}{|\Aut(f)|}=\frac{1}{4}H^{(p,1)}(n).
\end{equation}

For a squarefree integer $M$ and an odd prime $p$ with $(M,p)=1$ and a negative discriminant $-D$, one  \cite{BSZ19} has  

\begin{equation}\label{eq:Hurwitz-class-max}
	H^{(M,1)}(p^2D)-pH^{(M,1)}(D)=H^{(pM,1)}(D).
\end{equation}
 
So we have 
\begin{equation}\label{eq:BJ3}
H^{(2,1)}(4p^2n)-pH^{(2,1)}(4n)=H^{(2p,1)}(4n)
\end{equation}
 and 
 \begin{equation}
H^{(p,1)}(16n)-2H^{(p,1)}(4n)=H^{(2p,1)}(4n).
\end{equation}
Furthermore, from the above equation we can get 

\begin{equation}\label{eq:BJ4}
	H^{(p,1)}(4n)-2H^{(p,1)}(n)=H^{(2p,1)}(4n).
\end{equation}
 Combining the eqatuions (\ref{eq:BJ1}), (\ref{eq:BJ2}),  (\ref{eq:BJ3}), (\ref{eq:BJ4}) and $r_3(n)=12H^{(2,1)}(4n)$, we obtain the identity (\ref{eq:Berk-Jagy}).
  
  In fact, combining Theorem \ref{theo:rep} and the equality of the modified Hurwitz class number $H^{N_1,N_2}(D)$, we can derive more identities similar to (\ref{eq:Berk-Jagy}). 

\subsection{Du's equality}
In 2016, Du \cite{Du16} gave an interesting equality as follows. For squarefree $D$ with odd number of prime factors, let $B(D)$ be the unique quaternion algebra over $\mathbb{Q}$ of discriminant $D$, and 
\begin{equation*}
V(D)=\{x\in B(D)| \text{tr}(x)=0\}.
\end{equation*}
For a positive interger $N$ prime to $D$, let $L_D(N)=\mathcal{O}(N)\cap V(D)$, where $\mathcal{O}(N)$ is an Eichler order in $B$ of conductor $N$, and 
\begin{equation*}
r_{D,N}=r_{\text{gen}(L)}(m)=\frac{\sum\limits_{L_1\in \text{gen}(L)}\frac{r_{L_1}(m)}{|\Aut(L_1)|}}{\sum\limits_{L_1\in \text{gen}(L)}\frac{1}{|\Aut(L_1)|}}.
\end{equation*}
Now let $D$ be a square-free positive integer with even number of prime factors, $p\neq q$ be two different primes not dividing $D$, and $N$ be a positive integer prime to $Dpq$. Then
\begin{equation}\label{eq:du}
-\frac{2}{q-1}r_{Dp,N}(m)+\frac{q+1}{q-1}r_{Dp,Nq}(m)=-\frac{2}{p-1}r_{Dq,N}(m)+\frac{p+1}{p-1}r_{Dq,Np}(m)
\end{equation}
for every positive integer $m$. 

Now we give  a new proof of the equality (\ref{eq:du}).  In terms of the results of Theorem \ref{theo:rep},

\begin{equation*}
r_{Dp,Nq}(m)=\frac{H^{(Dp,Nq)}(4m)}{H^{(Dp,Nq)}(0)}=\frac{2H^{(Dp,N)}(4m)-H^{(Dpq,N)}(4m)}{(1+q)H^{(Dp,N)}(0)}.
\end{equation*}
In terms of the equality (\ref{Hurwitz-class-num-def2}  ), we have
\begin{equation*}
	H^{(D,Nq)}(4m)=2H^{(D,N)}(4m)-H^{(Dq,N)}(4m).
\end{equation*}
The left hand of the equality (\ref{eq:du}) 
\begin{equation*}
-\frac{2}{q-1}r_{Dp,N}(m)+\frac{q+1}{q-1}r_{Dp,Nq}(m)=\frac{-H^{(Dpq,N)}(4m)}{(q-1)H^{(Dp,N)}(0)}=\frac{H^{(Dpq,N)}(4m)}{H^{(Dpq,N)}(0)}.
\end{equation*}
Similarly, the right hand of the equality (\ref{eq:du})
\begin{equation*}
-\frac{2}{p-1}r_{Dq,N}(m)+\frac{p+1}{p-1}r_{Dq,Np}(m)=\frac{H^{(Dpq,N)}(4m)}{H^{(Dpq,N)}(0)}.
\end{equation*}

\newpage

\section{Examples}

In this section we will give some examples of representation of ternary quadratic forms. If the class number of a genus is one, we can give an exact formula of representation number of $n$ by the ternary quadratic forms. 
For squarefree integers $N$, there are 73 genera of ternary quadratic forms of level $4N$ with one class. 
Let $\mathcal{O}\subset Q_N$ be an Eichler order. Its type number equals 1 if its level $(N,F)$ is one of the following: (2,1), (3,1), (5,1), (7,1), (13,1), (30,1), (42,1), (70,1), (78,1), (2,3), (2,5), (2,7), (2,11), (2,15), (2,23), (3,2), (3,5), (3,11), (5,2), (7,3)\cite[p.94]{Li21}, and we get 73 genera with one class. We give the explicit formulas for the representation number of ternary quadratic forms as follows.
\begin{longtable}{llll}
\caption{Genera  with one class}\\
\toprule[2pt]
Genus&$N_f$&$d_f$&$R_f(n)$\\
\midrule
$G_{4,4,2}$&$4$&$4$&$R_{(1,1,1,0,0,0)}(n)=12H^{(2,1)}(4n)$\\
$G_{12,9,3}$&$4\cdot3$&$3^2$&$R_{(1,1,3,0,0,-1)}(n)=6H^{(3,1)}(4n)$\\
$G_{12,3,3}$&$4\cdot3$&$3$&$R_{(1,1,1,0,0,-1)}(n)=6H^{(3,1)}(12n)$\\
$G_{12,144,3}$&$4\cdot3$&$16\cdot3^2$&$R_{(3,4,4,-4,0,0)}(n)=6H^{(3,1)}(n)$\\
$G_{12,48,3}$&$4\cdot3$&$16\cdot3$&$R_{(1,4,4,-4,0,0)}(n)=6H^{(3,1)}(3n)$\\
$G_{20,25,5}$&$4\cdot5$&$5^2$&$R_{(2,2,2,-1,-1,-1)}(n)=3H^{(5,1)}(4n)$\\
$G_{20,5,5}$&$4\cdot5$&$5$&$R_{(1,1,2,1,1,1)}(n)=3H^{(5,1)}(20n)$\\
$G_{20,400,5}$&$4\cdot5$&$16\cdot5^2$&$R_{(3,7,7,-6,-2,-2)}(n)=3H^{(5,1)}(n)$\\
$G_{20,80,5}$&$4\cdot5$&$16\cdot5$&$R_{(3,3,3,2,2,2)}(n)=3H^{(5,1)}(5n)$\\
$G_{28,49,7}$&$4\cdot7$&$7^2$&$R_{(1,2,7,0,0,-1)}(n)=2H^{(7,1)}(4n)$\\
$G_{28,7,7}$&$4\cdot7$&$7$&$R_{(1,1,2,0,-1,0)}(n)=2H^{(7,1)}(28n)$\\
$G_{28,784,7}$&$4\cdot7$&$16\cdot7^2$&$R_{(4,7,8,0,-4,0)}(n)=2H^{(7,1)}(n)$\\
$G_{28,112,7}$&$4\cdot7$&$16\cdot7$&$R_{(1,4,8,-4,0,0)}(n)=2H^{(7,1)}(7n)$\\
$G_{52,169,13}$&$4\cdot13$&$13^2$&$R_{(2,5,5,-3,-1,-1)}(n)=H^{(13,1)}(4n)$\\
$G_{52,13,13}$&$4\cdot13$&$13$&$R_{(1,2,2,-1,0,-1)}(n)=H^{(13,1)}(52n)$\\
$G_{52,2704,13}$&$4\cdot13$&$16\cdot13^2$&$R_{(7,8,15,8,2,4)}(n)=H^{(13,1)}(n)$\\
$G_{52,208,13}$&$4\cdot13$&$16\cdot13$&$R_{(3,3,7,2,2,2)}(n)=H^{(13,1)}(13n)$\\
$G_{60,900,30}$&$4\cdot3\cdot5$&$4\cdot3^2\cdot5^2$&$2R_{(3,10,10,-10,0,0)}(n)=3H^{(30,1)}(4n)$\\
$G_{60,300,30}$&$4\cdot3\cdot5$&$4\cdot3\cdot5^2$&$2R_{(1,10,10,-10,0,0)}(n)=3H^{(30,1)}(12n)$\\
$G_{60,180,30}$&$4\cdot3\cdot5$&$4\cdot3^2\cdot5$&$2R_{(2,2,15,0,0,-2)}(n)=3H^{(30,1)}(20n)$\\
$G_{60,60,30}$&$4\cdot3\cdot5$&$4\cdot3\cdot5$&$2R_{(2,2,5,0,0,-2)}(n)=3H^{(30,1)}(60n)$\\
$G_{84,1764,42}$&$4\cdot3\cdot7$&$4\cdot3^2\cdot7^2$&$R_{(1,21,21,0,0,0)}(n)=H^{(42,1)}(4n)$\\
$G_{84,588,42}$&$4\cdot3\cdot7$&$4\cdot3\cdot7^2$&$R_{(3,7,7,0,0,0)}(n)=H^{(42,1)}(12n)$\\
$G_{84,252,42}$&$4\cdot3\cdot7$&$4\cdot3^2\cdot7$&$R_{(3,3,7,0,0,0)}(n)=H^{(42,1)}(28n)$\\
$G_{84,84,42}$&$4\cdot3\cdot7$&$4\cdot3\cdot7$&$R_{(1,1,21,0,0,0)}(n)=H^{(42,1)}(84n)$\\
$G_{140,4900,70}$&$4\cdot5\cdot7$&$4\cdot5^2\cdot7^2$&$2R_{(2,18,35,0,0,-2)}(n)=H^{(70,1)}(4n)$\\
$G_{140,980,70}$&$4\cdot5\cdot7$&$4\cdot5\cdot7^2$&$2R_{(6,6,7,0,0,-2)}(n)=H^{(70,1)}(20n)$\\
$G_{140,700,70}$&$4\cdot5\cdot7$&$4\cdot5^2\cdot7$&$2R_{(5,6,6,-2,0,0)}(n)=H^{(70,1)}(28n)$\\
$G_{140,140,70}$&$4\cdot5\cdot7$&$4\cdot5\cdot7$&$2R_{(1,2,18,-2,0,0)}(n)=H^{(70,1)}(140n)$\\
$G_{156,6084,78}$&$4\cdot3\cdot13$&$4\cdot3^2\cdot13^2$&$2R_{(6,13,21,0,-6,0)}(n)=H^{(78,1)}(4n)$\\
$G_{156,2028,78}$&$4\cdot3\cdot13$&$4\cdot3\cdot13^2$&$2R_{(2,7,39,0,0,-2)}(n)=H^{(78,1)}(12n)$\\
$G_{156,468,78}$&$4\cdot3\cdot13$&$4\cdot3^2\cdot13$&$2R_{(1,6,21,-6,0,0)}(n)=H^{(78,1)}(52n)$\\
$G_{156,156,78}$&$4\cdot3\cdot13$&$4\cdot3\cdot13$&$2R_{(2,33,7,0,-2,0)}(n)=H^{(78,1)}(156n)$\\
$G_{12,36,2}$&$4\cdot3$&$4\cdot3^2$&$R_{(2,2,3,0,0,-2)}(n)=3H^{(2,3)}(4n)$\\
$G_{12,12,2}$&$4\cdot3$&$4\cdot3$&$R_{(1,2,2,-2,0,0)}(n)=3H^{(2,3)}(12n)$\\
$G_{20,100,2}$&$4\cdot5$&$4\cdot5^2$&$R_{(1,5,5,0,0,0)}(n)=2H^{(2,5)}(4n)$\\
$G_{20,20,2}$&$4\cdot5$&$4\cdot5$&$R_{(1,1,5,0,0,0)}(n)=2H^{(2,5)}(20n)$\\
$G_{28,196,2}$&$4\cdot7$&$4\cdot7^2$&$2R_{(3,5,5,-4,-2,-2)}(n)=3H^{(2,7)}(4n)$\\
$G_{28,28,2}$&$4\cdot7$&$4\cdot7$&$2R_{(2,2,3,2,2,2)}(n)=3H^{(2,7)}(28n)$\\
$G_{44,484,2}$&$4\cdot11$&$4\cdot11^2$&$R_{(2,6,11,0,0,-2)}(n)=H^{(2,11)}(4n)$\\
$G_{44,44,2}$&$4\cdot11$&$4\cdot11$&$R_{(1,2,6,-2,0,0)}(n)=H^{(2,11)}(44n)$\\
$G_{60,900,2}$&$4\cdot3\cdot5$&$4\cdot3^2\cdot5^2$&$2R_{(5,6,9,-6,0,0)}(n)=H^{(2,15)}(4n)$\\
$G_{60,300,2}$&$4\cdot3\cdot5$&$4\cdot3\cdot5^2$&$2R_{(2,3,15,0,0,-2)}(n)=H^{(2,15)}(12n)$\\
$G_{60,180,2}$&$4\cdot3\cdot5$&$4\cdot3^2\cdot5$&$2R_{(1,6,9,-6,0,0)}(n)=H^{(2,15)}(20n)$\\
$G_{60,60,2}$&$4\cdot3\cdot5$&$4\cdot3\cdot5$&$2R_{(2,3,3,0,0,-2)}(n)=H^{(2,15)}(60n)$\\
$G_{92,2116,2}$&$4\cdot23$&$4\cdot23^2$&$2R_{(5,10,14,10,2,4)}(n)=H^{(2,23)}(4n)$\\
$G_{92,92,2}$&$4\cdot23$&$4\cdot23$&$2R_{(2,3,5,-2,0,-2)}(n)=H^{(2,23)}(92n)$\\
$G_{12,36,3}$&$4\cdot3$&$4\cdot3^2$&$R_{(1,3,3,0,0,0)}(n)=2H^{(3,2)}(4n)$\\
$G_{12,12,3}$&$4\cdot3$&$4\cdot3$&$R_{(1,1,3,0,0,0)}(n)=2H^{(3,2)}(12n)$\\
$G_{60,225,3}$&$4\cdot3\cdot5$&$3^2\cdot5^2$&$R_{(1,4,15,0,0,-1)}(n)=H^{(3,5)}(4n)$\\
$G_{60,75,3}$&$4\cdot3\cdot5$&$3\cdot5^2$&$R_{(2,2,5,0,0,-1)}(n)=H^{(3,5)}(12n)$\\
$G_{60,45,3}$&$4\cdot3\cdot5$&$3^2\cdot5$&$R_{(2,2,3,0,0,-1)}(n)=H^{(3,5)}(20n)$\\
$G_{60,15,3}$&$4\cdot3\cdot5$&$3\cdot5$&$R_{(1,1,4,0,-1,0)}(n)=H^{(3,5)}(60n)$\\
$G_{60,3600,3}$&$4\cdot3\cdot5$&$16\cdot3^2\cdot5^2$&$R_{(4,15,16,0,-4,0)}(n)=H^{(3,5)}(n)$\\
$G_{60,1200,3}$&$4\cdot3\cdot5$&$16\cdot3\cdot5^2$&$R_{(5,8,8,-4,0,0)}(n)=H^{(3,5)}(3n)$\\
$G_{60,720,3}$&$4\cdot3\cdot5$&$16\cdot3^2\cdot5$&$R_{(3,8,8,-4,0,0)}(n)=H^{(3,5)}(5n)$\\
$G_{60,240,3}$&$4\cdot3\cdot5$&$16\cdot3\cdot5$&$R_{(1,4,16,-4,0,0)}(n)=H^{(3,5)}(15n)$\\
$G_{132,1089,3}$&$4\cdot3\cdot11$&$3^2\cdot11^2$&$2R_{(6,7,10,7,3,6)}(n)=H^{(3,11)}(4n)$\\
$G_{132,363,3}$&$4\cdot3\cdot11$&$3\cdot11^2$&$2R_{(2,7,7,3,1,1)}(n)=H^{(3,11)}(12n)$\\
$G_{132,99,3}$&$4\cdot3\cdot11$&$3^2\cdot11$&$2R_{(2,3,5,-3,-1,0)}(n)=H^{(3,11)}(44n)$\\
$G_{132,33,3}$&$4\cdot3\cdot11$&$3\cdot11$&$2R_{(1,2,5,1,1,1)}(n)=H^{(3,11)}(132n)$\\
$G_{132,17424,3}$&$4\cdot3\cdot11$&$16\cdot3^2\cdot11^2$&$2R_{(7,19,39,-18,-6,-2)}(n)=H^{(3,11)}(n)$\\
$G_{132,5808,3}$&$4\cdot3\cdot11$&$16\cdot3\cdot11^2$&$2R_{(8,13,17,2,4,8)}(n)=H^{(3,11)}(3n)$\\
$G_{132,1584,3}$&$4\cdot3\cdot11$&$16\cdot3^2\cdot11$&$2R_{(5,5,17,-2,-2,-2)}(n)=H^{(3,11)}(11n)$\\
$G_{132,528,3}$&$4\cdot3\cdot11$&$16\cdot3\cdot11$&$2R_{(4,7,7,-6,0,-4)}(n)=H^{(3,11)}(33n)$\\
$G_{20,100,5}$&$4\cdot5$&$4\cdot5^2$&$R_{(2,3,5,0,0,-2)}(n)=2H^{(5,2)}(4n)$\\
$G_{20,20,5}$&$4\cdot5$&$4\cdot5$&$R_{(1,2,3,-2,0,0)}(n)=2H^{(5,2)}(20n)$\\
$G_{84,441,7}$&$4\cdot3\cdot7$&$3^2\cdot7^2$&$2R_{(2,8,8,-5,-1,-1)}(n)=H^{(7,3)}(4n)$\\
$G_{84,147,7}$&$4\cdot3\cdot7$&$3\cdot7^2$&$2R_{(3,3,5,-2,-2,-1)}(n)=H^{(7,3)}(12n)$\\
$G_{84,63,7}$&$4\cdot3\cdot7$&$3^2\cdot7$&$2R_{(2,2,5,2,2,1)}(n)=H^{(7,3)}(28n)$\\
$G_{84,21,7}$&$4\cdot3\cdot7$&$3\cdot7$&$2R_{(1,2,3,-1,-1,0)}(n)=H^{(7,3)}(84n)$\\
$G_{84,7056,7}$&$4\cdot3\cdot7$&$16\cdot3^2\cdot7^2$&$2R_{(8,11,23,2,8,4)}(n)=H^{(7,3)}(n)$\\
$G_{84,2352,7}$&$4\cdot3\cdot7$&$16\cdot3\cdot7^2$&$2R_{(5,12,12,-4,-4,-4)}(n)=H^{(7,3)}(3n)$\\
$G_{84,1008,7}$&$4\cdot3\cdot7$&$16\cdot3^2\cdot7$&$2R_{(5,8,8,4,4,4)}(n)=H^{(7,3)}(7n)$\\
$G_{84,336,7}$&$4\cdot3\cdot7$&$16\cdot3\cdot7$&$2R_{(3,3,11,2,2,2)}(n)=H^{(7,3)}(21n)$\\
\bottomrule[2pt]
\end{longtable}

\newpage

For the class number larger than one, we give one example.  Let $N=140=4\cdot5\cdot7$, there are $2^{2\times 2+1}=32$ genera in the set of all primitive positive definite ternary quadratic forms of level $140$.  The class number of primitive positive definite ternary quadratic forms of level 140 is  $|C(140)|=76$.  We give 32 formulas of the weighted sums of representation of ternary quadratic forms on each genus.

\begin{longtable}{llll}
\caption{Representation of  ternary quadratic forms of level 140}\\
\toprule[2pt]
Genus&$d_f$&$R_f(n)$\\
\midrule
$G_{140,1225,5}$&$5^2\cdot7^2$&$3R_{(3,3,35,0,0,-1)}(n)+2R_{(3,12,12,-11,-2,-2)}(n)+3R_{(5,7,10,0,-5,0)}(n)=3H^{(5,7)}(4n)$\\
$G_{140,1225,7}$&$5^2\cdot7^2$&$R_{(1,9,35,0,0,-1)}(n)+2R_{(4,9,11,9,1,2)}(n)=H^{(7,5)}(4n)$\\
$G_{140,245,5}$&$5\cdot7^2$&$3R_{(1,7,9,0,-1,0)}(n)+2R_{(4,4,4,1,1,1)}(n)+3R_{(1,2,35,0,0,-1)}(n)=3H^{(5,7)}(20n)$\\
$G_{140,245,7}$&$5\cdot7^2$&$R_{(3,3,7,0,0,-1)}(n)+2R_{(3,5,5,3,2,2)}(n)=H^{(7,5)}(20n)$\\
$G_{140,175,5}$&$5^2\cdot7$&$3R_{(1,5,10,-5,0,0)}(n)+2R_{(4,4,4,3,3,3)}(n)+3R_{(1,5,9,0,-1,0)}(n)=3H^{(5,7)}(28n)$\\
$G_{140,175,7}$&$5^2\cdot7$&$R_{(3,3,5,0,0,-1)}(n)+2R_{(2,2,12,-1,-1,-1)}(n)=H^{(7,5)}(28n)$\\
$G_{140,35,5}$&$5\cdot7$&$3R_{(1,3,3,-1,0,0)}(n)+2R_{(1,1,12,1,1,1)}(n)+3R_{(1,2,5,0,0,-1)}(n)=3H^{(5,7)}(140n)$\\
$G_{140,35,7}$&$5\cdot7$&$R_{(1,1,9,0,-1,0)}(n)+2R_{(1,3,4,3,1,1)}(n)=H^{(7,5)}(140n)$\\ 
$G_{140,4900,2}$&$4\cdot5^2\cdot7^2$&$2R_{(5,14,21,-14,0,0)}(n)+2R_{(7,10,20,-10,0,0)}(n)=H^{(2,35)}(4n)$\\
$G_{140,4900,5}$&$4\cdot5^2\cdot7^2$&$4R_{(3,12,35,0,0,-2)}(n)+2R_{(5,7,35,0,0,0)}(n)+2R_{(7,10,20,-10,0,0)}(n)=H^{(5,14)}(4n)$\\
$G_{140,4900,7}$&$4\cdot5^2\cdot7^2$&$R_{(1,35,35,0,0,0)}(n)+4R_{(4,9,35,0,0,-2)}(n)+4R_{(11,11,15,10,10,8)}(n)=H^{(7,10)}(4n)$\\
$G_{140,4900,70}$&$4\cdot5^2\cdot7^2$&$2R_{(2,18,35,0,0,-2)}(n)=H^{(70,1)}(4n)$\\
$G_{140,980,2}$&$4\cdot5\cdot7^2$&$2R_{(2,7,18,0,-2,0)}(n)+2R_{(1,14,21,-14,0,0)}(n)=H^{(2,35)}(20n)$\\
$G_{140,980,5}$&$4\cdot5\cdot7^2$&$4R_{(4,7,9,0,-2,0)}(n)+2R_{(1,7,35,0,0,0)}(n)+2R_{(2,4,35,0,0,-2)}(n)=H^{(5,14)}(20n)$\\
$G_{140,980,7}$&$4\cdot5\cdot7^2$&$R_{(5,7,7,0,0,0)}(n)+4R_{(3,5,19,-4,-2,-2)}(n)+4R_{(3,7,12,0,-2,0)}(n)=H^{(7,10)}(20n)$\\
$G_{140,980,70}$&$4\cdot5\cdot7^2$&$2R_{(6,6,7,0,0,-2)}(n)=H^{(70,1)}(20n)$\\
$G_{140,700,2}$&$4\cdot5^2\cdot7$&$2R_{(2,5,18,0,-2,0)}(n)+2R_{(2,3,25,0,0,-2)}(n)=H^{(2,35)}(28n)$\\
$G_{140,700,5}$&$4\cdot5^2\cdot7$&$4R_{(4,5,9,0,-2,0)}(n)+2R_{(1,5,35,0,0,0)}(n)+2R_{(1,10,20,-10,0,0)}(n)=H^{(5,14)}(28n)$\\
$G_{140,700,7}$&$4\cdot5^2\cdot7$&$R_{(5,5,7,0,0,0)}(n)+4R_{(3,5,12,0,-2,0)}(n)+4R_{(2,8,13,6,2,2)}(n)=H^{(7,10)}(28n)$\\
$G_{140,700,70}$&$4\cdot5^2\cdot7$&$2R_{(5,6,6,-2,0,0)}(n)=H^{(70,1)}(28n)$\\
$G_{140,140,2}$&$4\cdot5\cdot7$&$2R_{(1,6,6,-2,0,0)}(n)+2R_{(2,3,7,0,0,-2)}(n)=H^{(2,35)}(140n)$\\
$G_{140,140,5}$&$4\cdot5\cdot7$&$4R_{(1,3,12,-2,0,0)}(n)+2R_{(1,5,7,0,0,0)}(n)+2R_{(2,4,5,0,0,-2)}(n)=H^{(5,14)}(140n)$\\
$G_{140,140,7}$&$4\cdot5\cdot7$&$R_{(1,1,35,0,0,0)}(n)+4R_{(1,4,9,-2,0,0)}(n)+4R_{(3,4,4,-2,-2,-2)}(n)=H^{(7,10)}(140n)$\\
$G_{140,140,70}$&$4\cdot5\cdot7$&$2R_{(1,2,18,-2,0,0)}(n)=H^{(70,1)}(140n)$\\
$G_{140,19600,5}$&$16\cdot5^2\cdot7^2$&$3R_{(7,20,40,20,0,0)}(n)+2R_{(3,47,47,-46,-2,-2)}(n)+3R_{(12,12,35,0,0,4)}(n)=3H^{(5,7)}(n)$\\
$G_{140,19600,7}$&$16\cdot5^2\cdot7^2$&$R_{(4,35,36,0,-4,0)}(n)+2R_{(11,15,39,-10,-6,-10)}(n)=H^{(7,5)}(n)$\\
$G_{140,3920,5}$&$16\cdot5\cdot7^2$&$3R_{(4,7,36,0,-4,0)}(n)+2R_{(11,11,11,-6,-6,-6)}(n)+3R_{(4,8,35,0,0,-4)}(n)=3H^{(5,7)}(5n)$\\
$G_{140,3920,7}$&$16\cdot5\cdot7^2$&$R_{(7,12,12,-4,0,0)}(n)+2R_{(3,19,19,10,2,2)}(n)=H^{(7,5)}(5n)$\\
$G_{140,2800,5}$&$16\cdot5^2\cdot7$&$3R_{(1,20,40,-20,0,0)}(n)+2R_{(9,9,9,-2,-2,-2)}(n)+3R_{(4,5,36,0,-4,0)}(n)=3H^{(5,7)}(7n)$\\
$G_{140,2800,7}$&$16\cdot5^2\cdot7$&$R_{(5,12,12,-4,0,0)}(n)+2R_{(8,8,13,-4,-4,-4)}(n)=H^{(7,5)}(7n)$\\
$G_{140,560,5}$&$16\cdot5\cdot7$&$3R_{(4,5,8,0,-4,0)}(n)+2R_{(4,4,13,4,4,4)}(n)+3R_{(1,12,12,-4,0,0)}(n)=3H^{(5,7)}(35n)$\\
$G_{140,560,7}$&$16\cdot5\cdot7$&$R_{(1,4,36,-4,0,0)}(n)+2R_{(4,5,9,-2,0,-4)}(n)=H^{(7,5)}(35n)$\\
\bottomrule[2pt]
\end{longtable}
\newpage
\section*{Appendix}
\appendix
\section{The modified Hurwitz class number}
\begin{longtable}{cccccccccc}
\caption{The modified Hurwitz class number}\\
\toprule[2pt]
$D$&$H(D)$&$H^{(2,1)}(D)$&$H^{(3,1)}(D)$&$H^{(5,1)}(D)$&$H^{(7,1)}(D)$&$H^{(13,1)}(D)$&$H^{(30,1)}(D)$&$H^{(42,1)}(D)$\\
\midrule
0 & -1/12 & 1/12 & 1/6 & 1/3 & 1/2 & 1 & 2/3 & 1\\
3 & 1/3 & 2/3 & 1/3 & 2/3 & 0 & 0 & 4/3 & 0\\
4 & 1/2 & 1/2 & 1 & 0 & 1 & 0 & 0 & 2\\
7 & 1 & 0 & 2 & 2 & 1 & 2 & 0 & 0\\
8 & 1 & 1 & 0 & 2 & 2 & 2 & 0 & 0\\
11 & 1 & 2 & 0 & 0 & 2 & 2 & 0 & 0\\
12 & 4/3 & 2/3 & 4/3 & 8/3 & 0 & 0 & 4/3 & 0\\
15 & 2 & 0 & 2 & 2 & 4 & 4 & 0 & 0\\
16 & 3/2 & 1/2 & 3 & 0 & 3 & 0 & 0 & 2\\
19 & 1 & 2 & 2 & 0 & 0 & 2 & 0 & 0\\
20 & 2 & 2 & 0 & 2 & 0 & 4 & 0 & 0\\
23 & 3 & 0 & 0 & 6 & 6 & 0 & 0 & 0\\
24 & 2 & 2 & 2 & 0 & 0 & 4 & 0 & 0\\
27 & 4/3 & 8/3 & 1/3 & 8/3 & 0 & 0 & 4/3 & 0\\
28 & 2 & 0 & 4 & 4 & 2 & 4 & 0 & 0\\
31 & 3 & 0 & 6 & 0 & 0 & 6 & 0 & 0\\
32 & 3 & 1 & 0 & 6 & 6 & 6 & 0 & 0\\
35 & 2 & 4 & 0 & 2 & 2 & 0 & 0 & 0\\
36 & 5/2 & 5/2 & 1 & 0 & 5 & 0 & 0 & 2\\
39 & 4 & 0 & 4 & 0 & 8 & 4 & 0 & 0\\
40 & 2 & 2 & 4 & 2 & 0 & 0 & 4 & 0\\
43 & 1 & 2 & 2 & 2 & 2 & 0 & 8 & 8\\
44 & 4 & 2 & 0 & 0 & 8 & 8 & 0 & 0\\
47 & 5 & 0 & 0 & 10 & 0 & 10 & 0 & 0\\
48 & 10/3 & 2/3 & 10/3 & 20/3 & 0 & 0 & 4/3 & 0\\
51 & 2 & 4 & 2 & 0 & 4 & 0 & 0 & 8\\
52 & 2 & 2 & 4 & 4 & 0 & 2 & 8 & 0\\
55 & 4 & 0 & 8 & 4 & 0 & 0 & 0 & 0\\
56 & 4 & 4 & 0 & 0 & 4 & 0 & 0 & 0\\
59 & 3 & 6 & 0 & 0 & 0 & 6 & 0 & 0\\
60 & 4 & 0 & 4 & 4 & 8 & 8 & 0 & 0\\
63 & 5 & 0 & 2 & 10 & 5 & 10 & 0 & 0\\
64 & 7/2 & 1/2 & 7 & 0 & 7 & 0 & 0 & 2\\
67 & 1 & 2 & 2 & 2 & 2 & 2 & 8 & 8\\
68 & 4 & 4 & 0 & 8 & 0 & 0 & 0 & 0\\
71 & 7 & 0 & 0 & 0 & 14 & 14 & 0 & 0\\
72 & 3 & 3 & 0 & 6 & 6 & 6 & 0 & 0\\
75 & 7/3 & 14/3 & 7/3 & 2/3 & 0 & 0 & 4/3 & 0\\
76 & 4 & 2 & 8 & 0 & 0 & 8 & 0 & 0\\
79 & 5 & 0 & 10 & 0 & 10 & 0 & 0 & 0\\
80 & 6 & 2 & 0 & 6 & 0 & 12 & 0 & 0\\
83 & 3 & 6 & 0 & 6 & 0 & 6 & 0 & 0\\
84 & 4 & 4 & 4 & 0 & 4 & 8 & 0 & 4\\
87 & 6 & 0 & 6 & 12 & 0 & 0 & 0 & 0\\
88 & 2 & 2 & 4 & 4 & 4 & 0 & 8 & 8\\
91 & 2 & 4 & 4 & 0 & 2 & 2 & 0 & 8\\
92 & 6 & 0 & 0 & 12 & 12 & 0 & 0 & 0\\
95 & 8 & 0 & 0 & 8 & 16 & 0 & 0 & 0\\
96 & 6 & 2 & 6 & 0 & 0 & 12 & 0 & 0\\
99 & 3 & 6 & 0 & 0 & 6 & 6 & 0 & 0\\
100 & 5/2 & 5/2 & 5 & 0 & 0 & 0 & 0 & 10\\
\bottomrule[2pt]
\end{longtable}
\begin{longtable}{cccccccccc}
\caption{The modified Hurwitz class number}\\
\toprule[2pt]
$D$&$H^{(70,1)}(D)$&$H^{(78,1)}(D)$&$H^{(2,3)}(D)$&$H^{(2,5)}(D)$&$H^{(2,7)}(D)$&$H^{(2,11)}(D)$&$H^{(2,15)}(D)$&$H^{(2,23)}(D)$\\
\midrule

0& 2& 2& 1/3& 1/2& 2/3& 1& 2& 2\\
3& 0& 0& 2/3& 0& 4/3& 0& 0& 0\\
4& 0& 0& 0& 1& 0& 0& 0& 0\\
7& 0& 0& 0& 0& 0& 0& 0& 0\\
8& 4& 0& 2& 0& 0& 2& 0& 0\\
11& 0& 0& 4& 4& 0& 2& 8& 4\\
12& 0& 0& 2/3& 0& 4/3& 0& 0& 0\\
15& 0& 0& 0& 0& 0& 0& 0& 0\\
16& 0& 0& 0& 1& 0& 0& 0& 0\\
19& 0& 8& 0& 4& 4& 4& 0& 4\\
20& 0& 0& 4& 2& 4& 0& 4& 4\\
23& 0& 0& 0& 0& 0& 0& 0& 0\\
24& 0& 4& 2& 4& 4& 4& 4& 0\\
27& 0& 0& 14/3& 0& 16/3& 0& 0& 0\\
28& 0& 0& 0& 0& 0& 0& 0& 0\\
31& 0& 0& 0& 0& 0& 0& 0& 0\\
32& 4& 0& 2& 0& 0& 2& 0& 0\\
35& 4& 0& 8& 4& 4& 8& 8& 0\\
36& 0& 0& 4& 5& 0& 0& 8& 0\\
39& 0& 0& 0& 0& 0& 0& 0& 0\\
40& 0& 0& 0& 2& 4& 4& 0& 4\\
43& 8& 0& 0& 0& 0& 4& 0& 4\\
44& 0& 0& 4& 4& 0& 2& 8& 4\\
47& 0& 0& 0& 0& 0& 0& 0& 0\\
48& 0& 0& 2/3& 0& 4/3& 0& 0& 0\\
51& 0& 0& 4& 8& 0& 8& 8& 8\\
52& 0& 4& 0& 0& 4& 4& 0& 0\\
55& 0& 0& 0& 0& 0& 0& 0& 0\\
56& 0& 0& 8& 8& 4& 0& 16& 8\\
59& 0& 0& 12& 12& 12& 0& 24& 0\\
60& 0& 0& 0& 0& 0& 0& 0& 0\\
63& 0& 0& 0& 0& 0& 0& 0& 0\\
64& 0& 0& 0& 1& 0& 0& 0& 0\\
67& 8& 8& 0& 0& 0& 0& 0& 4\\
68& 0& 0& 8& 0& 8& 8& 0& 8\\
71& 0& 0& 0& 0& 0& 0& 0& 0\\
72& 12& 0& 6& 0& 0& 6& 0& 0\\
75& 0& 0& 14/3& 8& 28/3& 0& 8& 0\\
76& 0& 8& 0& 4& 4& 4& 0& 4\\
79& 0& 0& 0& 0& 0& 0& 0& 0\\
80& 0& 0& 4& 2& 4& 0& 4& 4\\
83& 0& 0& 12& 0& 12& 12& 0& 12\\
84& 0& 8& 4& 8& 4& 8& 8& 8\\
87& 0& 0& 0& 0& 0& 0& 0& 0\\
88& 8& 0& 0& 0& 0& 2& 0& 4\\
91& 0& 8& 0& 8& 4& 0& 0& 8\\
92& 0& 0& 0& 0& 0& 0& 0& 0\\
95& 0& 0& 0& 0& 0& 0& 0& 0\\
96& 0& 4& 2& 4& 4& 4& 4& 0\\
99& 0& 0& 12& 12& 0& 6& 24& 12\\
100& 0& 0& 0& 5& 0& 0& 0& 0\\

\bottomrule[2pt]
\end{longtable}
\begin{longtable}{cccccccccc}
\caption{The modified Hurwitz class number}\\
\toprule[2pt]
$D$&$H^{(3,2)}(D)$&$H^{(3,5)}(D)$&$H^{(3,11)}(D)$&$H^{(5,2)}(D)$&$H^{(7,3)}(D)$&$H^{(5,7)}(D)$&$H^{(7,5)}(D)$&$H^{(2,35)}(D)$\\
\midrule

0& 1/2& 1& 2& 1& 2& 8/3& 3& 4\\
3& 0& 0& 0& 0& 0& 4/3& 0& 0\\
4& 1& 2& 0& 0& 0& 0& 2& 0\\
7& 4& 0& 4& 4& 0& 2& 0& 0\\
8& 0& 0& 0& 2& 4& 0& 0& 0\\
11& 0& 0& 0& 0& 4& 0& 4& 0\\
12& 2& 0& 0& 4& 0& 16/3& 0& 0\\
15& 4& 2& 0& 4& 4& 0& 4& 0\\
16& 5& 6& 0& 0& 0& 0& 6& 0\\
19& 0& 4& 4& 0& 0& 0& 0& 8\\
20& 0& 0& 0& 2& 0& 4& 0& 4\\
23& 0& 0& 0& 12& 12& 0& 0& 0\\
24& 2& 4& 4& 0& 0& 0& 0& 8\\
27& 0& 0& 0& 0& 0& 16/3& 0& 0\\
28& 8& 0& 8& 8& 0& 4& 0& 0\\
31& 12& 12& 0& 0& 0& 0& 0& 0\\
32& 0& 0& 0& 10& 12& 0& 0& 0\\
35& 0& 0& 0& 0& 4& 2& 2& 4\\
36& 1& 2& 0& 0& 8& 0& 10& 0\\
39& 8& 8& 8& 0& 8& 0& 16& 0\\
40& 4& 4& 8& 2& 0& 4& 0& 4\\
43& 0& 0& 4& 0& 0& 0& 0& 0\\
44& 0& 0& 0& 0& 16& 0& 16& 0\\
47& 0& 0& 0& 20& 0& 20& 0& 0\\
48& 6& 0& 0& 12& 0& 40/3& 0& 0\\
51& 0& 4& 4& 0& 4& 0& 8& 0\\
52& 4& 0& 8& 4& 0& 8& 0& 0\\
55& 16& 8& 8& 8& 0& 8& 0& 0\\
56& 0& 0& 0& 0& 8& 0& 8& 8\\
59& 0& 0& 0& 0& 0& 0& 0& 24\\
60& 8& 4& 0& 8& 8& 0& 8& 0\\
63& 4& 0& 4& 20& 8& 10& 0& 0\\
64& 13& 14& 0& 0& 0& 0& 14& 0\\
67& 0& 0& 0& 0& 0& 0& 0& 0\\
68& 0& 0& 0& 8& 0& 16& 0& 0\\
71& 0& 0& 0& 0& 28& 0& 28& 0\\
72& 0& 0& 0& 6& 12& 0& 0& 0\\
75& 0& 4& 0& 0& 0& 4/3& 0& 16\\
76& 12& 16& 16& 0& 0& 0& 0& 8\\
79& 20& 20& 20& 0& 0& 0& 20& 0\\
80& 0& 0& 0& 10& 0& 12& 0& 4\\
83& 0& 0& 0& 0& 0& 12& 0& 0\\
84& 4& 8& 8& 0& 4& 0& 8& 8\\
87& 12& 0& 12& 24& 0& 24& 0& 0\\
88& 4& 0& 4& 4& 0& 0& 0& 0\\
91& 0& 8& 0& 0& 0& 0& 4& 8\\
92& 0& 0& 0& 24& 24& 0& 0& 0\\
95& 0& 0& 0& 16& 32& 0& 16& 0\\
96& 10& 12& 12& 0& 0& 0& 0& 8\\
99& 0& 0& 0& 0& 12& 0& 12& 0\\
100& 5& 10& 0& 0& 0& 0& 10& 0\\
\bottomrule[2pt]
\end{longtable}

\newpage
\section{Class number of primitive positive definite ternary quadratic forms of level $4N$}
\begin{longtable}{rr||rr||rr}
	\caption{}\label{tab:class_number1}\\
	\hline
	$4N$&$|C(4N)|$&$4N$&$|C(4N)|$&$4N$&$|C(4N)|$\\
	\hline
	4 & 1     &340 & 108    &668 & 98\\
	12 & 8    &348 & 140    &692 & 86\\
	20 & 8    &356 & 54     &708 & 200\\
	28 & 10     &364 & 124    &716 & 96\\
	44 & 14     &372 & 124    &724 & 86\\
	52 & 12     &380 & 148    &732 & 224\\
	60 & 48     &388 & 50     &740 & 212\\
	68 & 16     &404 & 60     &748 & 212\\
	76 & 18     &412 & 62     &764 & 110\\
	84 & 52     &420 & 384    &772 & 82\\
	92 & 22     &428 & 60     &780 & 600\\
	116 & 22    &436 & 52     &788 & 88\\
	124 & 26    &444 & 164    &796 & 106\\
	132 & 68    &452 & 56     &804 & 224\\
	140 & 76    &460 & 156    &812 & 236\\
	148 & 22    &476 & 180    &820 & 216\\
	156 & 80    &492 & 160    &836 & 244\\
	164 & 30    &508 & 68     &844 & 96\\
	172 & 28    &516 & 164    &852 & 248\\
	188 & 38    &524 & 80     &860 & 276\\
	204 & 92    &532 & 152    &868 & 232\\
	212 & 32    &548 & 66     &876 & 244\\
	220 & 100     &556 & 74     &884 & 236\\
	228 & 88    &564 & 176    &892 & 114\\
	236 & 44    &572 & 200    &908 & 116\\
	244 & 36    &580 & 172    &916 & 106\\
	260 & 96    &596 & 76     &924 & 680\\
	268 & 38    &604 & 80     &932 & 102\\
	276 & 112     &620 & 200    &940 & 252\\
	284 & 50    &628 & 76     &948 & 264\\
	292 & 40    &636 & 216    &956 & 132\\
	308 & 112     &644 & 200    &964 & 110\\
	316 & 48    &652 & 74     &988 & 264\\
	332 & 54    &660 & 496    &996 & 268\\
	
	\hline
\end{longtable}
\section*{Acknowledgements}
This research was supported partially by the National Natural Science Foundation of China(NSFC), Grant no. 12271405. We would like to express our appreciation  to Professor Nils-Peter Skoruppa for his helpful comments on this paper.

\bibliographystyle{plain}
\bibliography{TQF}

\end{document}